\documentclass[12pt,reqno]{amsart}

\usepackage{latexsym}
\usepackage{amssymb}
\usepackage{amsthm}
\usepackage{mathtools}
\usepackage{xfrac}
\usepackage{hyperref}
\usepackage{tikz-cd}
\usetikzlibrary{decorations.pathmorphing}
\usepackage{color}
\usepackage{enumitem}
\usepackage{rotating}

\usepackage{amsmath,amsfonts,amssymb}
\textwidth 5.7in
\evensidemargin +0.2in
\oddsidemargin +0.2in
\textheight 9in \topmargin -0.45in

\DeclareMathOperator\image{im}
\DeclareMathOperator\real{Re}

\DeclareMathOperator\id{Id}

\DeclareMathOperator{\interior}{int}

\newcommand{\map}[3]{\ensuremath{#1\colon#2\rightarrow#3}}
\newcommand{\dimNG}[1]{\ensuremath{\dim_{\mathcal{N}(#1)}}}
\newcommand{\sutdec}[1]{\overset{#1}{\rightsquigarrow}}

\newcommand{\N}{\ensuremath{\mathbb{N}}}  
\newcommand{\Z}{\ensuremath{\mathbb{Z}}}   
   
\newcommand{\R}{\ensuremath{\mathbb{R}}} 
\newcommand{\C}{\ensuremath{\mathbb{C}}}

\newcommand{\Poincare}{Poincar\'e\ }

\newcommand{\HH}{\ensuremath{H^{(2)}}}
\newcommand{\BB}{\ensuremath{b^{(2)}}}
\newcommand{\CC}{\ensuremath{C^{(2)}}}

\newcommand\isofrom{\xleftarrow{
		\,\smash{\raisebox{-0.65ex}{\ensuremath{\scriptstyle\sim}}}\,}}

\newcounter{restatesection}
\stepcounter{restatesection}

\newtheorem{lemma}{Lemma}[section]
\newtheorem{cor}[lemma]{Corollary}
\newtheorem{ques}[lemma]{Question}
\newtheorem{theorem}[lemma]{Theorem}
\newtheorem{mytheorem}{Theorem}[restatesection]
\newtheorem{prop}[lemma]{Proposition}

\theoremstyle{definition}
\newtheorem{defn}[lemma]{Definition}

\newtheorem*{conv*}{Convention}
\newtheorem*{lemma*}{Lemma}
\newtheorem*{exercise*}{Exercise}

\theoremstyle{remark}
\newtheorem{rem}[lemma]{Remark}
\newtheorem{examp}[lemma]{Example}
\newtheorem*{claim}{Claim}

\begin{document}
%%%Begin of metadata%%%
\author{Gerrit Herrmann}
\address {Fakult\"at f\"ur Mathematik \\
		Universit\"at Regensburg\\
		Germany\\
	\newline\url{www.gerrit-herrmann.de} }
\email{gerrit.herrmann@mathematik.uni-regensburg.de}	
%%%End of metadata%%%	
\title{Sutured manifolds and $\ell^2$-Betti numbers}
%%%
\begin{abstract}
Using the virtual fibering theorem of Agol we show that a sutured 3-manifold $(M, R_+,R_-,\gamma)$ is taut if and only if the $\ell^2$-Betti numbers of the pair $(M,R_-)$ are zero. As an application we can characterize Thurston norm minimizing surfaces in a 3-manifold $N$ with empty or toroidal boundary by the vanishing of certain $\ell^2$-Betti numbers.     
\end{abstract}
%%%
\maketitle
%%%
\section{Introduction}
%
%
%\begin{lemma}
%	Let $(M,R_+)$ be a taut sutured manifold then $R_+$ is incompressible.
%\end{lemma} 
%

%\begin{proof}
%	Suppose $M$ is taut and $R_+$ is compressible. Then there is a embedded closed curve $c$ bounding a embedded disk $D$. We consider
%	\[R':=R_+\setminus \nu D\cup\left\{\pm 1\right\}\times D^2\]
%	By definition $[R']=[R_+]\in H_2(M,\gamma;\Z)$ and $\chi(R')=\chi(R_+)+2$. Since $M$ is irreducible $R'$ is not a sphere and because each component of $\gamma$ is incompressible $R'$ is not a disk bounding a non-trivial curve in a component of $\gamma$. So $R'$ has strictly lower complexity.
	
%	If $c$ is separating, then $R'$ consist of two surfaces $R_1,R_2$ with
%	\[
%	\chi(R_1)+\chi(R_2)=\chi(R_+)+2
%	\] 
%	Again since $M$ is irreducible neither is a sphere and since components of $\gamma$ are incompressible both are not disk bounding non trivial loop in $\gamma$.
%\end{proof}
%

A sutured manifold $(M,R_+,R_-,\gamma)$ is a compact, oriented 3-manifold $M$ together with a set of disjoint oriented annuli and tori $\gamma$ on $\partial M$ which decomposes $\partial M\setminus\mathring{\gamma}$ into two subsurfaces $R_-$ and $R_+$. We refer to Section \ref{sec:Definitions} for a precise definition.
We say that a sutured manifold is \emph{balanced} if $\chi(R_+)=\chi(R_-)$. Balanced sutured manifolds arise in many different contexts. For example 3-manifolds cut along non-separating surfaces can give rise to balanced sutured manifolds.

Given a surface $S$ with connected components $S_1\cup\ldots\cup S_k$ we define its complexity to be $\chi_-(S)=\sum_{i=1}^{k}\max \left\{-\chi(S_i),0 \right\}$. A sutured manifold is called \emph{taut} if $R_+$ and $R_-$ have the minimal complexity among all surfaces representing $[R_-]=[R_+]\in H_2(M,\gamma;\Z)$.
\begin{theorem}[Main theorem]\label{thm:main}
Let $(M,R_+,R_-,\gamma)$ be a connected irreducible balanced sutured manifold. Assume that each component of $\gamma$ and $R_\pm$ is incompressible and $\pi_1(M)$ is infinite, then the following are equivalent
\begin{enumerate}[font=\normalfont]
	\item the manifold $(M,R_+,R_-,\gamma)$ is taut,\label{enm:taut}
	\item the $\ell^2$-Betti numbers of $(M,R_-)$ are all zero i.e.\ $\HH_\ast (M,R_-)=0$\label{enm:acyclic},
	\item the map $\HH_1(R_-)\to\HH_1(M)$ is a weak isomorphism.
\end{enumerate}
\end{theorem}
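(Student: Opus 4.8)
The plan is to prove the cycle of implications $(\ref{enm:taut}) \Rightarrow (\ref{enm:acyclic}) \Rightarrow (\ref{enm:taut})$ together with the equivalence $(\ref{enm:acyclic}) \Leftrightarrow (\ref{enm:taut})$ to statement (\ref{enm:taut})'s third characterization, using the long exact sequence of $\ell^2$-homology for the pair $(M,R_-)$ together with basic $\ell^2$-Betti number identities: multiplicativity under finite covers, Poincar\'e--Lefschetz duality for $\ell^2$-homology of 3-manifolds with boundary, the fact that $\ell^2$-Betti numbers of $S^1$ and of infinite groups in degree zero vanish, and Lück's approximation / L\"uck's theorem relating them to ordinary Betti numbers of finite covers. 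The equivalence of (\ref{enm:acyclic}) and (\ref{enm:taut})'s third form should be essentially formal: since the sutured manifold is balanced and incompressible, an Euler-characteristic count forces $\sum_i (-1)^i \BB_i^{(2)}(M,R_-) = \chi(M) - \chi(R_-) = \chi(M,R_-)$, and because $M$ deformation retracts onto a 2-complex one knows $\HH_i(M,R_-)=0$ for $i\geq 3$ and (using that $\partial M \neq \varnothing$, so $M$ has no top cell) $\HH_3(M)=0$; duality then pairs $\HH_i(M,R_-)$ with $\HH_{3-i}(M,R_+)$. Combined with the long exact sequence $\HH_i(R_-)\to\HH_i(M)\to\HH_i(M,R_-)\to\HH_{i-1}(R_-)$, vanishing of all $\HH_\ast(M,R_-)$ is equivalent to $\HH_\ast(R_-)\to\HH_\ast(M)$ being an isomorphism of Hilbert $\mathcal N(\pi_1 M)$-modules in every degree, and one checks degree $0$ vanishes for free (both sides zero since $\pi_1 M$ is infinite) and degree $2$ follows from degree $1$ by duality, leaving exactly the statement that $\HH_1(R_-)\to\HH_1(M)$ is a weak isomorphism.

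For the implication $(\ref{enm:taut}) \Rightarrow (\ref{enm:acyclic})$ I would invoke Agol's virtual fibering theorem: since $M$ is irreducible with infinite $\pi_1$ and incompressible boundary, after passing to a finite cover $\tilde M \to M$ the manifold $\tilde M$ is fibered, and one arranges the cover so that (a suitably modified, still taut) sutured structure is carried along — more precisely, tautness is preserved under finite covers in the sense that the preimages $\tilde R_\pm$ remain Thurston-norm minimizing, and by Gabai's work a taut sutured manifold admitting a fibration has the property that the fibration can be taken with fiber a norm-minimizing surface cobounding $\tilde R_+$ and $\tilde R_-$. In the fibered case $\tilde M$ is a mapping torus of a surface $F$ with the monodromy a homeomorphism, and $(\tilde M, \tilde R_-)$ is then (up to homotopy) the mapping cylinder construction whose $\ell^2$-homology one computes directly: the inclusion $\tilde R_- \hookrightarrow \tilde M$ is $\pi_1$-injective and the pair is an $\ell^2$-homology $S^1$-bundle situation, so $\HH_\ast(\tilde M, \tilde R_-)$ vanishes because the relevant Wang-type sequence has the identity minus monodromy acting on the ($\ell^2$-acyclic, since $F$ has boundary and hence $\pi_1 F$ is a nontrivial free group) chain complex of the fiber, or more robustly because $\BB^{(2)}_\ast$ of a 3-manifold fibering over $S^1$ vanish. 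By L\"uck's multiplicativity, $\BB^{(2)}_\ast(M,R_-) = \BB^{(2)}_\ast(\tilde M,\tilde R_-)/[\pi_1 M : \pi_1 \tilde M] = 0$.

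For the converse $(\ref{enm:acyclic}) \Rightarrow (\ref{enm:taut})$ I would argue contrapositively: if $(M,R_+,R_-,\gamma)$ is not taut, then $[R_-]\in H_2(M,\gamma)$ is represented by a surface of strictly smaller complexity, which after the sutured-manifold-decomposition machinery (Gabai) produces, in a finite cover detected again via Agol's theorem, a discrepancy forcing some $\BB^{(2)}_i(M,R_-) > 0$; concretely, tautness is equivalent to the equality $\chi_-(R_-) = x_M([R_-])$ (the Thurston norm), and the Euler characteristic computation above shows $\sum (-1)^i \BB^{(2)}_i(M,R_-) = -\chi_-(R_-)$ when taut but $= -x_M([R_-]) + (\text{correction}) $ otherwise, so non-tautness produces a nonzero alternating sum, hence a nonzero Betti number. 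The cleanest route is probably: combine the fibered-cover picture with the well-known fact that for a 3-manifold fibering over $S^1$ the fiber is automatically Thurston-norm minimizing in its class (Thurston), so in the virtual fiber the induced class is represented by a norm-minimizer; pulling this back down via covering-space multiplicativity of the Thurston norm and of complexity gives tautness of the original. The main obstacle I anticipate is bookkeeping the sutured structure through Agol's finite cover — ensuring the cover can be chosen to simultaneously make $M$ fibered, keep $R_\pm$ connected-behaved and norm-minimizing, and be compatible with the long exact sequence — and dually, in the converse, extracting a genuine nonvanishing $\ell^2$-Betti number from mere non-minimality rather than just an inequality of Euler characteristics; this is where one must use that $\ell^2$-Betti numbers see the norm-minimizing surface in \emph{every} finite cover, via L\"uck approximation, not just rationally.
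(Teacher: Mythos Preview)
Your sketch of $(\ref{enm:acyclic})\Leftrightarrow(3)$ is fine and matches the paper's Corollary~\ref{cor:equiv2n3}. The other two implications, however, each contain a genuine gap.

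\textbf{The implication $(\ref{enm:taut})\Rightarrow(\ref{enm:acyclic})$.} You want to apply Agol's theorem to $M$ directly, but Agol's virtual fibering theorem applies to irreducible $3$-manifolds with \emph{empty or toroidal} boundary, and a sutured manifold with nonempty $R_\pm$ does not have toroidal boundary. There is no sense in which a finite cover $\tilde M$ of $M$ ``is fibered'' or ``is a mapping torus'': it still has the surfaces $\widehat R_\pm$ sitting in its boundary. The paper handles this by doubling $M$ along $R_+\cup R_-$ to obtain $DM(\gamma)$, which \emph{does} have toroidal boundary, then applies Agol there to make $R_+\cup R_-$ a quasi-fiber in a finite cover, and finally cuts back along the preimage of $R_+\cup R_-$ to return to a sutured manifold (Proposition~\ref{prop:virtualvanishing}). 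What one obtains in the cover $\widehat M$ is not a fibration but a decomposition surface $S$ with $\widehat M\sutdec{S}\widehat M'$ a product (Lemma~\ref{lem:quasifiber}). This only yields vanishing of $H^{\phi,(2)}_\ast(\widehat M,\widehat R_-)$ for the \emph{cyclic} cover dual to $[S]$ (Lemma~\ref{lem:vanishing}), which is far weaker than $\HH_\ast(\widehat M,\widehat R_-)=0$; your appeal to multiplicativity under finite covers does not bridge this. The paper closes the gap with Schick's approximation theorem (Theorem~\ref{thm:appschick}): one runs the cyclic-cover vanishing on an exhausting tower of finite covers and takes a limit.

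\textbf{The implication $(\ref{enm:acyclic})\Rightarrow(\ref{enm:taut})$.} Your proposed Euler-characteristic argument cannot work: for a balanced sutured manifold one always has $\chi(M,R_-)=0$, hence $\sum_i(-1)^i\BB_i(M,R_-)=0$ \emph{regardless} of tautness, so the alternating sum carries no information about whether $R_-$ is norm-minimizing. The paper instead proves an inequality of the shape $\tfrac12\bigl(\chi(S)-\chi(R_-)\bigr)\le \BB_2(M,R_-)$ for a norm-minimizing surface $S$ homologous to $R_-$ (Lemma~\ref{lem:inequaility}), using a ``half lives, half dies'' lemma for $\ell^2$-homology (Lemma~\ref{lem:hlhd}) applied to the boundary of the piece of $M$ between $R_-$ and $S$. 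This is the missing idea: you need to compare $R_-$ to an actual norm-minimizer inside $M$, and the $\ell^2$-dimension of a kernel in a Mayer--Vietoris/long exact sequence is what detects the discrepancy.
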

\begin{rem}
The same statement holds true if one replaces $R_-$ with $R_+$.
\end{rem}
By Gabai's theory of sutured manifold decompositions we obtain the following result about Thurston norm minimizing surfaces (see Section~\ref{sec:Definitions} for a definition) in an irreducible $3$-manifold $N$ with empty or toroidal boundary.

\begin{theorem}\label{thm:main2}
	Let $N$ be a connected orientable irreducible compact 3-manifold with empty or toroidal boundary and $\Sigma\hookrightarrow N$ a properly embedded decomposition surface, then the following are equivalent
\begin{enumerate}[font=\normalfont]
	\item $\Sigma$ is Thurston norm minimizing in the sense of Section~\ref{sec:Definitions},
	\item the $\ell^2$-Betti numbers of the pair $(N\setminus \nu(\Sigma),\Sigma_-)$ are zero, where $\nu(\Sigma)\cong\Sigma\times(-1,1)$ is the interior of a tubular neighborhood $\Sigma\times[-1,1]$ and $\Sigma_-$ is given by $\Sigma\times\left\{-1\right\}$.
\end{enumerate}	
\end{theorem}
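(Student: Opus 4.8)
The plan is to deduce Theorem~\ref{thm:main2} from the Main Theorem~\ref{thm:main} together with Gabai's theory of sutured manifold decompositions. First I would recall the dictionary: a properly embedded decomposition surface $\Sigma\hookrightarrow N$ with $N$ irreducible and having empty or toroidal boundary gives rise to the sutured manifold $(M,R_+,R_-,\gamma)$ obtained by decomposing the taut sutured manifold $(N,\emptyset,\emptyset,\partial N)$ (equipped with a suitable orientation convention on the torus boundary components) along $\Sigma$. Concretely $M = N\setminus\nu(\Sigma)$, and $R_\pm$ are the copies $\Sigma\times\{\pm1\}$ together with the parts of $\partial N$ lying on the appropriate side; in particular $R_- = \Sigma_-$ up to the toroidal boundary pieces, and the sutured manifold is balanced because $\chi(R_+) = \chi(\Sigma) = \chi(R_-)$. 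Gabai's fundamental theorem states that this decomposed sutured manifold is taut if and only if $\Sigma$ is Thurston norm minimizing (this is exactly how "Thurston norm minimizing decomposition surface" is defined in Section~\ref{sec:Definitions}), so the equivalence $(1)\Leftrightarrow$ "the decomposed sutured manifold is taut" is essentially by definition.

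Next I would verify that the decomposed sutured manifold $(M,R_+,R_-,\gamma)$ satisfies the hypotheses of Theorem~\ref{thm:main}: it is connected (or one reduces to components), irreducible (since $N$ is irreducible and we removed a neighborhood of an incompressible — or suitably chosen — surface; decomposing an irreducible sutured manifold along a well-positioned surface yields an irreducible one by Gabai), balanced as noted, and has incompressible $\gamma$ and $R_\pm$; finally $\pi_1(M)$ is infinite because $M$ is a nontrivial piece of an irreducible $3$-manifold with the relevant components not being balls. Some care is needed here: the decomposition surface must be chosen as in Gabai's setup so that the resulting sutured manifold is genuinely "nice" (the components of $R_\pm$ and $\gamma$ incompressible), and if $\Sigma$ or its complement is compressible one first compresses, which does not change the Thurston norm or the relevant $\ell^2$-Betti numbers. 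I would isolate this as a lemma or fold it into the setup in Section~\ref{sec:Definitions}.

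With both pieces in hand the argument is short: by Gabai, $\Sigma$ is Thurston norm minimizing $\iff$ $(M,R_+,R_-,\gamma)$ is taut; by Theorem~\ref{thm:main} (applied with $R_-$), $(M,R_+,R_-,\gamma)$ is taut $\iff$ $\HH_\ast(M,R_-)=0$; and $R_- = \Sigma_-$ in the notation of the statement (after identifying the toroidal boundary contributions, which carry trivial $\ell^2$-homology relative to themselves and hence do not affect the vanishing — a point I would check using the long exact sequence of a triple and the fact that tori have vanishing $\ell^2$-Betti numbers). Chaining these equivalences yields $(1)\Leftrightarrow(2)$.

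The main obstacle I anticipate is bookkeeping rather than deep mathematics: matching conventions so that "$\Sigma_-$" in Theorem~\ref{thm:main2} coincides with "$R_-$" in Theorem~\ref{thm:main}, handling the toroidal boundary components of $N$ correctly (they must be distributed between $R_+$ and $R_-$, or absorbed into $\gamma$, in a way compatible with Gabai's conventions), and ensuring the incompressibility and infinite-$\pi_1$ hypotheses of the Main Theorem are met by the decomposed manifold — possibly after an innocuous compression step. Once Section~\ref{sec:Definitions} fixes these conventions, the proof is a two-line citation of Gabai plus Theorem~\ref{thm:main}.
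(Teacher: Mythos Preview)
Your proposal is correct and matches the paper's approach: the paper leaves the proof of Theorem~\ref{thm:main2} implicit, stating just before the theorem that it follows from Theorem~\ref{thm:main} via Gabai's theory of sutured manifold decompositions (Lemma~\ref{lem:closeddec} supplies one direction of the dictionary between ``$\Sigma$ Thurston norm minimizing'' and ``the decomposed sutured manifold is taut''). One simplification you can make: since the initial sutured structure on $N$ has $R_\pm=\emptyset$ and $\gamma=\partial N$, the decomposition formulas give $R_-'=\Sigma_-$ exactly and all torus boundary components land in $\gamma'$, so the torus correction via a long exact sequence that you anticipate is unnecessary.
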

\begin{rem}
	If one removes the assumption of $\Sigma$ being a decomposition surface, then $(1)$ still implies $(2)$.
\end{rem}

As an application of Theorem~\ref{thm:main2} we have the following theorem first proven by Friedl and L\"uck with different methods \cite{FL18}.
\setcounter{mytheorem}{\value{lemma}}
\begin{theorem}\label{thm:haupt:luckfried}
	Let $N$ be a connected compact oriented irreducible 3-manifold with empty or toroidal boundary and let $\phi\in H^1(N;\Z)$ be a primitive cohomology class. We write $N_{\ker \phi}\to N$ for the cyclic covering corresponding to $\ker \phi$. We have
	\[ b^{(2)}_1 (N_{\ker\phi}) = x_N( \phi ). \]
\end{theorem}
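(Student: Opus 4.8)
The plan is to reduce Theorem~\ref{thm:haupt:luckfried} to Theorem~\ref{thm:main2} by building a sutured manifold out of a Thurston norm minimizing surface dual to $\phi$ and then computing the $\ell^2$-Betti number of the cyclic cover in terms of that sutured data. Concretely, I would first pick a properly embedded oriented surface $\Sigma \hookrightarrow N$ with $[\Sigma]$ Poincar\'e dual to $\phi$ and with $\chi_-(\Sigma) = x_N(\phi)$; after discarding null-homotopic sphere and disk components (using irreducibility of $N$ and that $\phi$ is primitive, so $\Sigma$ may be taken connected and norm minimizing, in particular a decomposition surface in the sense of Section~\ref{sec:Definitions}). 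Cutting $N$ along $\Sigma$ produces the compact manifold $M = N \ssetminus \nu(\Sigma)$ with two distinguished copies $\Sigma_- = \Sigma\times\{-1\}$ and $\Sigma_+ = \Sigma\times\{+1\}$ of the surface in its boundary; together with the toroidal part of $\partial N$ as the annular/toroidal suture region $\gamma$, this is a balanced sutured manifold $(M,\Sigma_+,\Sigma_-,\gamma)$, and it is taut precisely because $\Sigma$ is norm minimizing.

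Next I would identify $N_{\ker\phi}$ with the infinite cyclic mapping-telescope-type construction obtained by gluing $\Z$-many copies of $M$ end to end along the $\Sigma_\pm$, i.e. $N_{\ker\phi} = \big(\coprod_{n\in\Z} M\times\{n\}\big)\big/(\Sigma_+\times\{n\} \sim \Sigma_-\times\{n+1\})$. This gives a Mayer--Vietoris / long exact sequence of $\ell^2$-homology groups relating $\HH_\ast(N_{\ker\phi})$, $\bigoplus_{n}\HH_\ast(M)$ and $\bigoplus_n \HH_\ast(\Sigma)$, all as modules over the appropriate group von Neumann algebra, with the connecting structure encoding the shift. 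The key input is Theorem~\ref{thm:main2}: tautness of $(M,\Sigma_+,\Sigma_-,\gamma)$ is equivalent to $\HH_\ast(M,\Sigma_-) = 0$, equivalently the inclusion $\Sigma_- \hookrightarrow M$ being a weak ($\ell^2$-)homology isomorphism. Feeding this into the Mayer--Vietoris sequence should collapse most terms, leaving $b^{(2)}_1(N_{\ker\phi})$ expressed through a single surface contribution.

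The remaining computation is then essentially an Euler-characteristic bookkeeping: the $\ell^2$-Euler characteristic is multiplicative/additive in the natural way, $\chi^{(2)}(N_{\ker\phi})$ vanishes (a finite cover of $N$ argument, or directly because $\chi(N)=0$ for a $3$-manifold with toroidal or empty boundary), and $b^{(2)}_0(N_{\ker\phi}) = 0$ since $\pi_1$ is infinite, while $b^{(2)}_2$ and $b^{(2)}_3$ vanish for the relevant aspherical/boundary reasons once $\Sigma$ is incompressible. Combining $0 = \chi^{(2)}(N_{\ker\phi})$ with the Mayer--Vietoris identity and the vanishing from Theorem~\ref{thm:main2}, one gets $b^{(2)}_1(N_{\ker\phi}) = -\chi(\Sigma) = \chi_-(\Sigma) = x_N(\phi)$, where the middle equality uses that a norm minimizing $\Sigma$ has no sphere or disk components. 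The main obstacle I anticipate is making the Mayer--Vietoris argument genuinely rigorous at the level of Hilbert $\mathcal{N}(\pi_1)$-modules: one must be careful that the infinite gluing is handled with the correct completed tensor products, that the shift-equivariance is used to control the connecting maps, and that ``weak isomorphism'' (rather than isomorphism) suffices to run the exact-sequence argument --- this is where one pays for working with $\ell^2$-Betti numbers rather than ordinary homology, and it is exactly the place where the virtual fibering theorem underlying Theorem~\ref{thm:main2} does its work.
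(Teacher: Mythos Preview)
Your overall strategy coincides with the paper's: choose a Thurston norm minimizing surface $\Sigma$ dual to $\phi$, cut to get the taut sutured manifold $M=N\setminus\nu(\Sigma)$, invoke Theorem~\ref{thm:main2} to obtain $\HH_\ast(M,\Sigma_-)=0$, and then exploit the description of $N_{\ker\phi}$ as infinitely many copies of $M$ glued along $\Sigma$ to reduce $b^{(2)}_1(N_{\ker\phi})$ to $b^{(2)}_1(\Sigma)=-\chi(\Sigma)=x_N(\phi)$.

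Where you and the paper diverge is in the handling of the non-compact space $N_{\ker\phi}$. You propose an infinite Mayer--Vietoris with terms $\bigoplus_n \HH_\ast(M)$ and $\bigoplus_n \HH_\ast(\Sigma)$; as you yourself flag, this is delicate, since these are not finitely generated Hilbert $\mathcal{N}(G)$-modules and the usual weak-exactness arguments do not apply directly. The paper sidesteps this entirely by exhausting $N_{\ker\phi}$ by compact pieces $X_0=M$, $X_n=M\sqcup_{\Sigma}X_{n-1}\sqcup_{\Sigma}M$, using excision to identify $\HH_\ast(X_n,X_{n-1})$ with two copies of $\HH_\ast(M,\Sigma_\pm)$ (which vanishes by Theorem~\ref{thm:main2}), and concluding inductively that $b^{(2)}_\ast(X_n,\Sigma)=0$ for all $n$. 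Passing to the limit then uses only the cofinality of the von Neumann dimension under directed colimits~\cite[Theorem~6.13]{Lu02}, which is a clean statement for this situation. This gives $b^{(2)}_\ast(N_{\ker\phi},\Sigma)=0$ in all degrees, and the long exact sequence of the pair $(N_{\ker\phi},\Sigma)$ immediately yields $b^{(2)}_1(N_{\ker\phi})=b^{(2)}_1(\Sigma)$.

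In particular the Euler-characteristic step you sketch is both unnecessary and, as written, not valid: $N_{\ker\phi}$ is not a finite CW-complex, so neither $\chi(N_{\ker\phi})$ nor the identity $\chi^{(2)}=\chi$ is available, and $N_{\ker\phi}$ is certainly not a finite cover of $N$. Once you have $b^{(2)}_\ast(N_{\ker\phi},\Sigma)=0$ the pair sequence already isolates degree~$1$ for you, since $b^{(2)}_k(\Sigma)=0$ for $k\neq 1$.
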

Here $x_N$ denotes the Thurston norm on $H^1(N;\Z)$ (see Section~\ref{sec:Definitions} for a definition).

Another application of Theorem~\ref{thm:main} is that for a taut sutured manifold the $\ell^2$-torsion is well defined.
\begin{cor}\label{cor:l2torsion}
Let $(M,R_+,R_-,\gamma)$ be a taut sutured manifold, then the pair $(M,R_-)$ is $\ell^2$-det-acyclic and hence the $\ell^2$-torsion $\rho^{(2)}(M,R_-)$ is well defined.
\end{cor}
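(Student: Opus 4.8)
The plan is to deduce Corollary~\ref{cor:l2torsion} directly from the Main Theorem~\ref{thm:main}. The statement has two parts: first, that $(M,R_-)$ is $\ell^2$-det-acyclic, i.e.\ not only do the $\ell^2$-Betti numbers vanish but the Fuglede--Kadison determinants of the differentials in the $\ell^2$-chain complex are all positive (equivalently, the Novikov--Shubin invariants are positive, or the relevant operators have spectral density functions that are "det-class"); second, that under this hypothesis the $\ell^2$-torsion $\rho^{(2)}(M,R_-)$ is a well-defined real number.

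First I would reduce to the case where $(M,R_+,R_-,\gamma)$ actually satisfies all the hypotheses of Theorem~\ref{thm:main}. A taut sutured manifold is by definition irreducible (or can be taken so after discarding sphere components) and the components of $R_\pm$ and $\gamma$ are incompressible --- this is standard in Gabai's framework and I would recall the precise implications from Section~\ref{sec:Definitions}. If $\pi_1(M)$ is finite then $M$ is a disk bundle or similarly degenerate and the pair $(M,R_-)$ is $\ell^2$-torsion is elementary to compute, so the interesting case is $\pi_1(M)$ infinite. Also, if $M$ is disconnected one treats each component separately and uses additivity of $\ell^2$-torsion. Then Theorem~\ref{thm:main} applies and gives $\HH_\ast(M,R_-)=0$, so the pair is already $\ell^2$-acyclic.

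The key extra input needed is that a compact $3$-manifold pair has positive Novikov--Shubin invariants, hence the $\ell^2$-chain complex of $(\widetilde M,\widetilde{R_-})$ is of determinant class. For this I would invoke the now-standard fact (following from the virtual fibering theorem of Agol together with work of Lück--Schick, or more directly from the fact that fundamental groups of compact $3$-manifolds are residually finite and the Lück approximation machinery applies) that finite CW-complexes with virtually $\Z$-by-(finite type) or, more to the point, aspherical $3$-manifold, fundamental groups give rise to det-class $\ell^2$-chain complexes; the standard reference is that all compact $3$-manifolds are "$\ell^2$-det-acyclic when $\ell^2$-acyclic." Concretely: $\HH_\ast(M,R_-)=0$ plus positivity of Novikov--Shubin invariants (which holds for $3$-manifold groups by Lück's book, Chapter~4, via the fibering result) yields det-acyclicity, and then $\rho^{(2)}(M,R_-)\in\R$ is defined as the alternating sum of the logarithms of the Fuglede--Kadison determinants of the differentials.

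The main obstacle I anticipate is not the formal deduction but making sure the determinant-class condition is genuinely available in this setting rather than merely the vanishing of $\ell^2$-Betti numbers; the vanishing alone does not guarantee a well-defined torsion. I expect the cleanest route is to cite that $\pi_1(M)$ is residually finite (as $M$ is a compact aspherical $3$-manifold with infinite $\pi_1$, by geometrization) and then apply the result that $\ell^2$-acyclic finite free chain complexes over such groups are of determinant class --- this uses Schick's theorem on det-class for residually finite groups satisfying the Determinant Conjecture, which is known for $3$-manifold groups. With det-acyclicity in hand, well-definedness of $\rho^{(2)}(M,R_-)$ is immediate from the general theory (Lück's book). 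So the proof is essentially: verify hypotheses $\Rightarrow$ apply Theorem~\ref{thm:main} for $\ell^2$-acyclicity $\Rightarrow$ invoke the Determinant Conjecture for $3$-manifold groups for det-class $\Rightarrow$ conclude.
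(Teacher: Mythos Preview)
Your proposal is correct and follows essentially the same route as the paper: apply Theorem~\ref{thm:main} to get $\ell^2$-acyclicity, then invoke the fact that $3$-manifold groups are residually finite (this is Lemma~\ref{lem:classg}, which places them in the class $\mathcal{G}$) together with Schick's result \cite[Theorem~1.11]{Sch01} that groups in $\mathcal{G}$ satisfy the determinant-class property, so that $\ell^2$-acyclic implies $\ell^2$-det-acyclic. One small comment: your detour through positivity of Novikov--Shubin invariants is unnecessary and in fact not what is used---determinant class via the class $\mathcal{G}$ is a strictly weaker condition and is all that is needed here.
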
 
We refer to ~\cite[Definition 3.91]{Lu02} for the definitions of $\ell^2$-det-acyclic and $\ell^2$-torsion.
\begin{proof}[Proof of Corollary~\ref{cor:l2torsion}]	This follows from Theorem~\ref{thm:main}, Lemma ~\ref{lem:classg}, and ~\cite[Theorem 1.11]{Sch01}.
\end{proof}
If $M$ is a complete hyperbolic  3-manifold of finite volume, then by a result of L\"uck and Schick \cite[Theorem 0.7]{LS99} the $\ell^2$-torsion of $M$ is defined and one has
\[
\rho^{(2)}(M)=\frac{-1}{6\pi}\cdot\operatorname{Vol}_\mathbb{H}(M).\]

This result together with our corollary raises the following question.
\begin{ques}
	What topological and geometrical information of a taut sutured manifold $(M,R_+,R_-,\gamma)$ are contained in $\rho^{(2)}(M,R_-)$?
\end{ques} 
The author will pursue this question in a future paper with Ben-Aribi and Friedl.
Theorem ~\ref{thm:main} can be seen as an $\ell^2$-analogue of the following theorem by Friedl and T. Kim.
\begin{theorem}\cite[Theorem 1.1]{FK13} 
	Let $(M,\gamma)$ be a connected irreducible balanced sutured manifold with $M\neq S^1\times D^2$ and $M \neq D^3$. Then $(M,\gamma)$ is taut if and only if $H_*^{\alpha} (M,R_-;\C^k) = 0$ for some unitary representation $\alpha\colon\pi_1(M)\to U(k)$.
\end{theorem}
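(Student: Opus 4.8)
The plan is to establish the two implications separately, following a route parallel to the $\ell^2$-statement of Theorem~\ref{thm:main}: the reverse implication is a twisted-homology incarnation of the fact that the sutured Thurston norm is a lower bound for complexity, while the forward implication is extracted from Gabai's sutured manifold hierarchy.

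\emph{The implication ``$H_\ast^\alpha(M,R_-;\C^k)=0$ for some unitary $\alpha$ $\Rightarrow$ taut''.} Since $\alpha\colon\pi_1(M)\to U(k)$ is unitary, the local system $\C^k_\alpha$ is self-dual, so twisted Poincar\'e--Lefschetz duality promotes the hypothesis to $H_\ast^\alpha(M,R_+;\C^k)=0$ as well. Suppose $(M,\gamma)$ were not taut. Because $M$ is irreducible and is neither $D^3$ nor $S^1\times D^2$, this produces a properly embedded surface $S$ with $\partial S\subset\gamma$, $[S]=[R_-]\in H_2(M,\gamma;\Z)$, and $\chi_-(S)<\chi_-(R_-)$ (after discarding sphere and disk components). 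One then invokes the twisted Thurston-norm inequality of \cite{FK13}: the vanishing of $H_1^\alpha(M,R_-;\C^k)$ forces $\chi_-(R_-)$ to equal the minimum of $\chi_-$ over all surfaces in the class $[R_-]$, contradicting the existence of $S$.

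\emph{The implication ``taut $\Rightarrow$ some unitary $\alpha$ works''.} By Gabai's theorem the taut balanced sutured manifold $(M,\gamma)$ admits a sutured manifold hierarchy
\[
(M,\gamma)=(M_0,\gamma_0)\ \sutdec{S_0}\ (M_1,\gamma_1)\ \sutdec{S_1}\ \cdots\ \sutdec{S_{n-1}}\ (M_n,\gamma_n)
\]
through taut sutured manifolds, terminating in a disjoint union of product sutured manifolds $(R_i\times I,\partial R_i\times I)$. For a product sutured manifold the inclusion of the negative boundary is a homotopy equivalence, so $H_\ast^\beta(M_n,R^{(n)}_-;\C^k)=0$ for \emph{every} representation $\beta$. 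The idea is to descend this vanishing back up the hierarchy by means of a gluing lemma: if $(M_i,\gamma_i)\sutdec{S_i}(M_{i+1},\gamma_{i+1})$ is a taut decomposition, $\beta_{i+1}$ is a unitary representation of $\pi_1(M_{i+1})$ with $H_\ast^{\beta_{i+1}}(M_{i+1},R^{(i+1)}_-)=0$, and $\beta_i$ is a unitary representation of $\pi_1(M_i)$ restricting on the two sides of $S_i$ to conjugates of $\beta_{i+1}$, then the Mayer--Vietoris sequence for re-gluing $M_{i+1}$ along $S_i$, together with the tautness of $S_i$ (which controls $H_\ast^{\beta_i}(S_i)$), gives $H_\ast^{\beta_i}(M_i,R^{(i)}_-)=0$.

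To make this work one must realize all the representations needed along the hierarchy as restrictions of a single unitary representation $\alpha$ of $\pi_1(M)$. It suffices to take $\alpha$ to be induced from the (permutation, hence unitary) regular representation of a well-chosen finite quotient $\pi_1(M)\twoheadrightarrow G$, selected so that every hierarchy surface $S_i$ lifts to an embedded surface in the associated finite cover and the lifted hierarchy is still taut. The existence of such a $G$ rests on the separability of the fundamental groups of the complementary pieces and of the hierarchy surfaces inside $\pi_1(M)$. I expect the construction of this finite quotient to be the main obstacle: one needs a \emph{single} cover through which all of the finitely many hierarchy surfaces lift while the sutured structure and tautness are preserved, and for which the gluing lemma applies at every stage; reconciling the inductive vanishing statement with the non-uniqueness of sutured manifold decompositions is the delicate part. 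Once a compatible $G$ is in hand, the descent through the hierarchy is formal. This is, in effect, the finite-dimensional analogue of the argument used here for Theorem~\ref{thm:main}, where Agol's virtual fibering theorem provides the needed cover directly.
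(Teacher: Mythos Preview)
The paper does not prove this statement. It is quoted verbatim from \cite{FK13} as motivation, with the point that Theorem~\ref{thm:main} is its $\ell^2$-analogue; there is no proof in the present paper to compare your proposal against.

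That said, a few remarks on your sketch are in order. For the reverse implication you invoke ``the twisted Thurston-norm inequality of \cite{FK13}'', which is essentially the content of the theorem you are asked to prove; this is circular unless you actually reproduce that inequality. A self-contained argument would instead mimic Lemmas~\ref{lem:hlhd}--\ref{lem:inequaility} of the present paper with $\C^k_\alpha$-coefficients in place of $\ell^2$-coefficients, which goes through because unitarity gives Poincar\'e--Lefschetz duality and $\dim_\C$ replaces $\dim_{\mathcal N(G)}$.

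For the forward implication you correctly identify the heart of the matter --- producing a single finite quotient $G$ compatible with an entire sutured hierarchy --- and then stop. Note that this is \emph{not} the analogue of what the present paper does for Theorem~\ref{thm:main}: here the author bypasses the full hierarchy entirely, using Agol's virtual fibering to pass to a cover where a \emph{single} decomposition yields a product, and then appeals to approximation (Theorem~\ref{thm:appschick}). That shortcut is unavailable for a fixed finite-dimensional representation, which is precisely why \cite{FK13} must work harder; your outline is closer in spirit to their actual argument than to anything in this paper, but it remains an outline.
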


\subsection*{Outline of the content}
The paper is organised as follows. In Section \ref{sec:Definitions} we review the basic definitions and introduce our notation. In Section \ref{sec:basics} we discuss some basic properties of $\ell^2$-Betti numbers and sutured manifolds before we prove the main result in Section \ref{sec:proof}. In Section~\ref{sec:app} we show how Theorem~\ref{thm:haupt:luckfried} follows from Theorem~\ref{thm:main2}.
\subsection*{Acknowledgements} The author gratefully acknowledges the support provided by the SFB 1085 ‘Higher Invariants’ at the University of Regensburg, funded by the DFG. Moreover, I would like to thank my advisor Stefan Friedl for his many suggestions and his helpful advice.
\section{Definitions}\label{sec:Definitions}
\subsection{Norm minimizing surfaces and quasi-fibers}
Let $M$ be an oriented irreducible $3$-manifold and $A\subset \partial M$ a subsurface. The \emph{Thurston norm} is defined by
\begin{align*}
x_M\colon H_2(M,A;\Z)&\longrightarrow \Z\\
\sigma&\longmapsto \min\left\{\chi_-(S)\ \left|\ \begin{array}{ll}
[S]=\sigma 
\text{ and}\text{ $S$ is properly}\\
\text{embedded i.e.\ $\partial S=S\cap A$}
\end{array}\right.\right\}. 
\end{align*}
This map extends to a semi norm on $H_2(M,A;\R)$ by a result of Thurston~\cite{Th86}.

Recall that an embedded surface $S\hookrightarrow M$ is called \emph{incompressible} if for all choices of base points the inclusion map induces a monomorphism on the fundamental groups.
We call a properly embedded surface $S$ in $(M,A)$ \emph{Thurston norm minimizing} if $S$ is incompressible, $x_M([S,\partial S])=\chi_-(S)$ and $S$ has no disk or sphere components. The condition for a Thurston norm minimizing surface to be incompressible is not very restrictive. For example if the surface doesn't contain a torus or annulus component, then it is automatically satisfied (\cite[Chapter 3 C.22]{AFW15}).
\begin{examp}
	Thurston has shown that if $F\subset M$ with $\chi(F)\leq 0$ is a fiber of a fibration $M\to S^1$, then $F$ is Thurston norm minimizing in $(M,\partial M)$.
\end{examp}
We will also make use of the following theorem due to Gabai.
\begin{theorem}\cite[Corollary 6.13]{Ga83}\label{thm:thurstonnormmult}
	Let $\map{p}{N}{M}$ be a finite cover of a connected compact orientable $3$-manifold $M$ and $S$ a Thurston norm minimizing surface in $(M,\partial M)$. Then $p^{-1}(S)$ is a Thurston norm minimizing surface in $(N,\partial N)$. 
\end{theorem}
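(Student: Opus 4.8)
The plan is to reduce the statement to two classical facts — one deep, one elementary — by exploiting that the relevant construction commutes with finite covers. The deep input is Gabai's realization theorem: since $S$ is Thurston norm minimizing in $(M,\partial M)$, there is a taut, transversely oriented foliation $\mathcal{F}$ of $M$ (with no Reeb components, interacting with $\partial M$ in the way dictated by the sutured structure) having $S$ as a union of compact leaves. One obtains $\mathcal{F}$ by decomposing the sutured manifold associated with $(M,\partial M)$ along $S$ to get a taut sutured manifold, applying Gabai's existence theorem \cite{Ga83} to build a taut finite-depth foliation tangent to the two boundary copies of $S$, and regluing.

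The reason to pass through foliations is that a finite covering $\map{p}{N}{M}$ pulls a taut foliation back to a taut foliation: leaves pull back to leaves, a transverse closed curve (equivalently a closed $2$-form positive on the leaves) pulls back to one of the same kind, and no Reeb component is created. Hence $p^{*}\mathcal{F}$ is a taut foliation of $N$, and since the preimage of a union of leaves is a union of leaves, $p^{-1}(S)$ is a union of compact leaves of $p^{*}\mathcal{F}$. Thurston's theorem \cite{Th86}, that a union of compact leaves of a taut foliation with no sphere component is norm minimizing in its class, then gives $x_N([p^{-1}(S),\partial p^{-1}(S)])=\chi_-(p^{-1}(S))$; as $p^{-1}(S)\to S$ is a $\deg(p)$-fold cover of a surface all of whose components have non-positive Euler characteristic, this common value is $\deg(p)\cdot\chi_-(S)=\deg(p)\cdot x_M([S,\partial S])$, which incidentally recovers the multiplicativity $x_N(p^{!}\sigma)=\deg(p)\cdot x_M(\sigma)$ on this class.

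The two remaining requirements in the definition of ``Thurston norm minimizing'' are elementary. No component of $p^{-1}(S)$ is a disk or a sphere, because each is a finite covering of some component $S_i$ of $S$, and $S_i$ satisfies $\chi(S_i)\le 0$ by hypothesis, whence its covers have non-positive Euler characteristic as well. And $p^{-1}(S)$ is incompressible: if $\widetilde{S}_0$ is a component of $p^{-1}(S)$ lying over a component $S_0$ of $S$, then $p$ restricts to a covering $\widetilde{S}_0\to S_0$, so $\pi_1(\widetilde{S}_0)\hookrightarrow\pi_1(S_0)$; post-composing with the injection $\pi_1(S_0)\hookrightarrow\pi_1(M)$ provided by incompressibility of $S$ shows that $\pi_1(\widetilde{S}_0)\to\pi_1(N)\hookrightarrow\pi_1(M)$ is injective, hence $\pi_1(\widetilde{S}_0)\to\pi_1(N)$ is injective.

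The whole difficulty is concentrated in the first step: realizing a given norm-minimizing surface as a leaf of a taut foliation is precisely the hard theorem of Gabai's work, and there is no shortcut, since the non-increase of the Thurston norm under finite covers is genuinely non-elementary. An alternative packaging of the same input avoids foliations: translate ``$S$ is norm minimizing'' into ``$M\ssetminus S$, with its induced sutured structure, is taut,'' pull a taut sutured manifold hierarchy of $M\ssetminus S$ back along $p$ (each decomposing surface pulls back, and a finite cover of a product sutured manifold is a disjoint union of product sutured manifolds, hence taut), and run the hierarchy back up using that a sutured manifold admitting a sutured decomposition to a taut one is itself taut.
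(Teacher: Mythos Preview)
The paper does not give its own proof of this theorem: it is quoted as \cite[Corollary 6.13]{Ga83} and used as a black box. So there is nothing in the paper to compare your argument against beyond the citation itself.

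That said, your sketch is essentially Gabai's own proof of Corollary~6.13: realize $S$ as a union of compact leaves of a taut finite-depth foliation (this is the main existence theorem of \cite{Ga83}), pull the foliation back along the finite cover, and invoke Thurston's result \cite{Th86} that compact leaves of a taut foliation are norm minimizing. Your verification of the remaining conditions (no disk or sphere components, incompressibility) is correct and elementary. One small omission: Gabai's existence theorem needs $M$ irreducible, which is not part of the hypothesis as stated; the standard fix is to pass to the prime decomposition first and observe that $S^2\times S^1$ summands and capping spheres contribute nothing to $\chi_-$ on either side. Your ``alternative packaging'' via pulling back a sutured hierarchy is also correct and is really the same argument with the foliation language stripped away.
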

% Using that $M$ is irreducible one can show that a Thurston norm minimizing surface is incompressible ~\cite[Lemma 5.7]{Ca07}.

\begin{defn}
We call a properly embedded surface $S$ in $(M,\partial M)$ a \emph{quasi-fiber} if the following two conditions are satisfied.
\begin{enumerate}
	\item The surface $S$ is Thurston norm minimizing,
	\item and there exists a fibration over the circle with fiber $F$ such that \[x_M( [F])+x_M([S])=x_M([F]+[S]).\]
\end{enumerate}
\end{defn}
\begin{rem}
A surface is a quasi-fiber if and only if it is Thurston norm minimizing and the corresponding class lies in the boundary of a fiber cone in $H_2(M,\partial M;\R)$ ~\cite{Th86}.
\end{rem}

\subsection{Sutured manifolds} If $M$ is an oriented manifold then we endow $\partial M$ with the orientation coming from the outwards-pointing normal vectors.
 A sutured manifold $(M,R_+,R_-,\gamma)$ is a compact oriented 3-manifold with a decomposition of its boundary
\[\partial M = R_+\cup \gamma \cup -R_-, \]
into oriented submanifolds such that
\begin{enumerate}
	\item $\gamma$ is a collection of disjoint embedded annuli or tori,
	\item $R_+ \cap R_-=\emptyset$,
	\item if $A$ is an annulus component of $\gamma$, then $R_-\cap A$ is a boundary component of $A$ and of
	$R_-$, and similarly for $R_+\cap A$. Furthermore, $[R_+\cap A] = [R_-\cap A]\in H_1(A; \Z)$
	where we endow $R_\pm\cap A$ with the orientation coming from the boundary of the oriented manifold $R_\pm$.
\end{enumerate}
We call a sutured manifold $M$ \emph{taut}, if $M$ is irreducible and $R_+$ and $R_-$ are Thurston norm minimizing viewed as properly embedded surfaces in $(M,\gamma)$. We call a sutured manifold \emph{balanced} if $\chi_-(R_+)=\chi_-(R_-)$.
\begin{rem}
In our definition of a taut sutured manifold we demand $R_\pm$ to be incompressible, which differs from the convention most other authors choose. Our convention just rules out notorious counterexamples in the case that $M=S^1\times D^2$.
\end{rem}

An example of a balanced taut sutured manifold is given by the {\em product sutured manifold}
\[ \left(R\times [-1,1], R\times 1, R\times -1, \partial R\times[-1,1]\right),\]
where $R$ is a surface with $R\not\cong S^2$. Another example is given by an irreducible 3-manifold $N$ with empty or toroidal boundary where the sutured manifold structure is given by $\gamma=\partial N$.

\subsection{Sutured manifold decomposition} In \cite{Ga83} Gabai introduced the notation of a sutured manifold decomposition which we now recall. Let $(S,\partial S)$ be a properly embedded oriented surface in a sutured manifold $(M,R_+,R_-,\gamma)$. We call $S$ a \emph{decomposition surface} if $S$ is transversal to $R_\pm$ and for every connected component $c\in S\cap\gamma$ one of the following holds.

\begin{enumerate}
	\item $c$ is a properly embedded non-separating arc,
	\item $c$ is a simple closed curve in an annulus component of $\gamma$ which is homologous to $[R_-\cap A]\in H_1(A;\Z)$,
	\item $c$ is a homotopically non-trivial curve in a torus component $T$ of $\gamma$ and if $c'$ is another curve in $S\cap T$, then $c$ and $c'$ are homologous in $T$.     
\end{enumerate}
\begin{figure}
	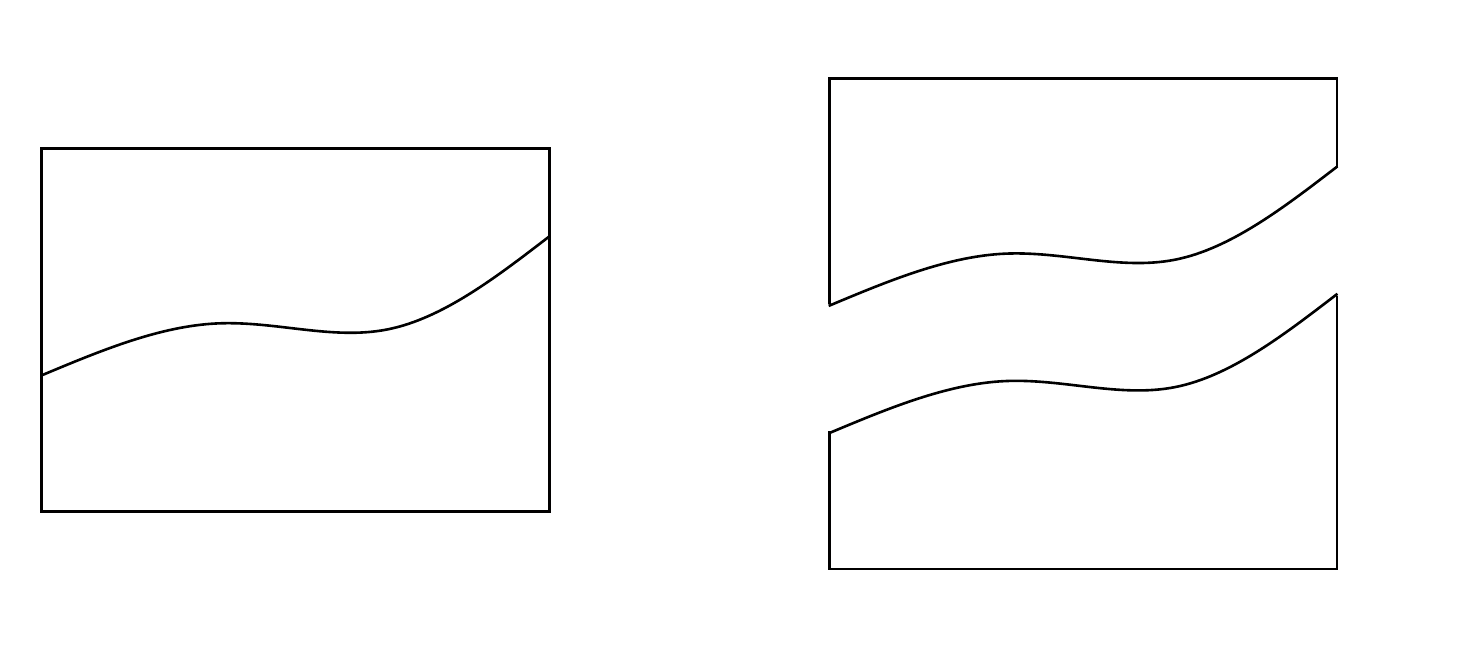
	\caption{This is a schematic picture of a sutured decomposition $M\sutdec{S} M'$. One cuts along a surface $S$ but has to keep track of the orientations. The grey dots denote the sutures.}\label{fig:sutdec}
\end{figure}
Given a decomposition surface $S$ we define the \emph{sutured decomposition} along $S$ by
\[
	(M,R_-,R_+, \gamma) \overset{S}{\rightsquigarrow} (M',R_-',R_+', \gamma ')
\]
where 
\begin{align*}
	M'&=M\setminus S\times (-1,1), \\
	\gamma' &= (\gamma \cap M')\cup \overline{\nu(S'_+\cap R_-)} \cup \overline{\nu(S'_-\cap R_+)},\\
	R'_+ &= ((R_+\cap M')\cup S'_+)\setminus \interior \gamma ',\\
	R'_- &= ((R_-\cap M')\cup S'_-)\setminus \interior \gamma '.
\end{align*}
Here $S'_+$ (resp. $S'_-$) is the outward-pointing (resp.\ inward-pointing) part of $S\times \{-1,1\}\cap M'$ (See Figure~\ref{fig:sutdec}). 
In the rest of the paper we suppress $R_\pm$ from the notation and abbreviate $(M,R_+,R_-,\gamma)$ to $(M,\gamma)$ assuming that $R_\pm$ is clear from the context. 
We make use of the following elementary lemma rather frequently.
\begin{lemma}\label{lem:closeddec}
	Let $(N,\emptyset,\emptyset,\partial N)$ be a sutured manifold (i.e.\ $N$ has empty or toroidal boundary) and $S$ a Thurston norm minimizing decomposition surface in $N$, then $N'$ defined by $N\sutdec{S} N'$ is a taut sutured manifold.
\end{lemma}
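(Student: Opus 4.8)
The plan is to verify the three defining conditions of tautness for $N'=(M',R'_+,R'_-,\gamma')$ in turn: irreducibility of $M'$; incompressibility of $R'_\pm$ with no sphere or disk components; and Thurston norm minimality of $R'_\pm$ in $(M',\gamma')$. The key simplification is that, because the sutured structure on $N$ has $R_\pm=\emptyset$, the decomposition data collapse: one gets $\gamma'=\gamma\cap M'=\partial N\cap M'$, and $R'_\pm=S'_\pm$ is, up to discarding a boundary collar that gets absorbed into $\gamma'$, just one of the two parallel push-offs $S\times\{\pm1\}$ of $S$ inside $M'=N\ssetminus S=N\setminus(S\times(-1,1))$. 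Note also that $N$ is irreducible: this is implicit in the hypothesis, since the Thurston norm, and hence the phrase ``$S$ is Thurston norm minimizing,'' is only defined for irreducible ambient manifolds.

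First I would record that $M'$ is irreducible. By hypothesis $S$ is incompressible and has no sphere components, so cutting the irreducible manifold $N$ along $S$ yields an irreducible manifold; this is a standard fact (an innermost-sphere argument; see e.g.\ \cite{AFW15}). Next, incompressibility of $R'_\pm$: the inclusion $R'_\pm\hookrightarrow M'\hookrightarrow N$ is, up to isotopy in $N$, just $S\hookrightarrow N$, because $M'=N\setminus(S\times(-1,1))$ sits inside $N$ and there $S\times\{\pm1\}$ is isotopic to $S=S\times\{0\}$. Since $\pi_1(S)\to\pi_1(N)$ is injective (incompressibility of $S$) and this map factors through $\pi_1(R'_\pm)\to\pi_1(M')$, the latter is injective as well. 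The same identification shows $R'_\pm\cong S$, so $R'_\pm$ has no sphere or disk components.

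The main point is the norm minimality. Let $T$ be any properly embedded surface in $(M',\gamma')$ with $[T]=[R'_+]\in H_2(M',\gamma';\Z)$. Since $\gamma'\subset\partial N$ and $\interior M'\subset\interior N$, the inclusion of pairs $(M',\gamma')\hookrightarrow(N,\partial N)$ exhibits $T$ as a properly embedded surface in $N$, and I claim its class there is $[S]\in H_2(N,\partial N;\Z)$: a relative $2$-chain in $M'$ witnessing $[T]=[R'_+]$ becomes such a chain in $N$, and in $N$ the push-off $R'_+\simeq S\times\{1\}$ is homologous rel $\partial N$ to $S$ via the product region $S\times[0,1]$; hence $[T]=[R'_+]=[S]$ in $H_2(N,\partial N;\Z)$. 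Because $S$ is Thurston norm minimizing, $\chi_-(T)\ge x_N([S])=\chi_-(S)=\chi_-(R'_+)$. As $T$ was arbitrary and $R'_+$ is itself a competitor, $x_{M'}([R'_+])=\chi_-(R'_+)$, so $R'_+$ is Thurston norm minimizing in $(M',\gamma')$; the argument for $R'_-$ is identical. Combined with the previous paragraph, this shows $N'$ is taut.

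The lemma is elementary and I do not expect a genuine obstacle; the one place to be careful is the last paragraph, namely pinning down the collar bookkeeping so that $R'_\pm$ is honestly a push-off of $S$ and so that the map $H_2(M',\gamma';\Z)\to H_2(N,\partial N;\Z)$ really does send $[R'_\pm]$ to $[S]$.
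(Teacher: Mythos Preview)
The paper does not actually supply a proof of Lemma~\ref{lem:closeddec}; it is introduced with the phrase ``We make use of the following elementary lemma rather frequently'' and then immediately moves on to Lemma~\ref{lem:sutdec}. So there is nothing to compare against beyond the implicit claim that the statement is elementary.

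Your argument is correct and is precisely the direct verification one would expect. The unpacking of the decomposition formulas in the case $R_\pm=\emptyset$ is right: $\gamma'=\gamma\cap M'$ and $R'_\pm=S'_\pm\cong S$, with no collar correction needed since $S'_\pm\cap\interior\gamma'=\emptyset$. Your observation that irreducibility of $N$ is built into the paper's convention for the Thurston norm is apt, and the standard innermost-sphere argument then gives irreducibility of $M'$. The factorization $\pi_1(R'_\pm)\to\pi_1(M')\to\pi_1(N)$ handles incompressibility cleanly. For norm minimality, pushing a competitor $T$ from $(M',\gamma')$ into $(N,\partial N)$ and using the product cobordism $S\times[0,1]$ to identify $[R'_+]$ with $[S]$ is exactly the right move; your caveat about the bookkeeping is prudent but there is no hidden subtlety here. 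In short, your proof is complete and matches what the author evidently had in mind when calling the lemma elementary.
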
 
We also need the following lemma from the theory of sutured manifold decomposition due to Gabai.
\begin{lemma}\label{lem:sutdec}
	Let $N$ be connected irreducible oriented closed 3-manifold with empty or torodial boundary. Let $S$ and $F$ be Thurston norm minimizing decomposition surfaces with $x_N([S])+x_M([F])=x_N([F]+[S])$. We assume that $S$ and $F$ are in general position such that the number of components of $S\cap F$ is minimal. Denote by $N'$ the sutured manifold obtained by $N\sutdec{S}N'$, then $F':=F\cap N'$ is a decomposition surface for $N'$. Moreover, $N'$ and $N''$ are taut sutured manifolds, where $N''$ is given by $N'\sutdec{F'} N''$. One also has a commutative diagram of taut sutured manifold decompositions
	\[
	\begin{tikzcd}
		&N\arrow[ld,squiggly,"F"]\arrow[d,squiggly,"F\oplus S"]\arrow[rd,squiggly,"S"] \\
	N\setminus\nu(F)\arrow[dr,squiggly,"S'"]& N\setminus\nu(F\oplus S)\arrow[d,squiggly,"C"] &N'\arrow[ld,squiggly,"F'"] \\
		& N'',
	\end{tikzcd}
	\] 
	 where $F\oplus S$ is the oriented cut and paste sum (see Figure~\ref{fig:orientedsum}), $S'= S\cap (N\setminus\nu(F))$, and $C=C_1,\ldots C_n$ is a disjoint union of annuli and disks. 
\end{lemma}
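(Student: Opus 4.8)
The plan is to reduce the whole statement to one fact: the oriented cut-and-paste sum $F\oplus S$ is again a Thurston norm minimizing decomposition surface. Granting this, Lemma~\ref{lem:closeddec} immediately tells us that $N\ssetminus\nu(F\oplus S)$ is taut, and likewise that $N'=N\ssetminus\nu(S)$ and $N\ssetminus\nu(F)$ are taut, since $S$ and $F$ are Thurston norm minimizing decomposition surfaces in $N$. Everything else — that $N''$ is taut, that $F'$ and $S'$ are decomposition surfaces, and that the diagram commutes — will come out of a local analysis near $S\cap F$, carried out exactly in the spirit of Gabai's construction of sutured hierarchies.

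First I would verify that $F\oplus S$ is Thurston norm minimizing. An oriented cut-and-paste changes neither the relative homology class nor the Euler characteristic, so $[F\oplus S]=[F]+[S]$ and $\chi(F\oplus S)=\chi(F)+\chi(S)$. Since $|S\cap F|$ is minimal and $N$ is irreducible, no component of $S\cap F$ bounds a disk on both sides, so the curves and arcs along which we cut and paste are essential; a standard innermost-disk and outermost-arc argument (as in \cite{Ga83}) then shows that $F\oplus S$ is incompressible and has no sphere or disk components, hence $\chi_-(F\oplus S)=-\chi(F\oplus S)=\chi_-(F)+\chi_-(S)$. Combining this with the hypothesis $x_N([F])+x_N([S])=x_N([F]+[S])$ gives $\chi_-(F\oplus S)=x_N([F\oplus S])$. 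Performing the cut-and-paste near $\gamma$ so as to keep the boundary curves admissible (the oriented condition of Figure~\ref{fig:orientedsum}) makes $F\oplus S$ a decomposition surface, so it is Thurston norm minimizing.

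Next comes the local analysis in a regular neighbourhood of $S\cup F$. Cutting $N$ along $S$ and then along $F':=F\cap N'$ gives the same underlying $3$-manifold as cutting along $S$ and $F$ simultaneously; comparing the result with $N\ssetminus\nu(F\oplus S)$, the two differ only over $S\cap F$, where $N\ssetminus\nu(F\oplus S)$ carries an extra product piece for each component of $S\cap F$ — a product annulus $S^1\times I$ over each circle and a product disk over each arc. Cutting $N\ssetminus\nu(F\oplus S)$ along this disjoint union $C=C_1\cup\dots\cup C_n$ of product annuli and disks therefore recovers $N''$ with its sutured structure, which is the vertical composite of the diagram. Since decomposition along product annuli and product disks sends taut sutured manifolds to taut sutured manifolds \cite{Ga83}, we conclude that $N''$ is taut. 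Repeating the comparison with the roles of $S$ and $F$ interchanged identifies $N''$ with the result of $N\ssetminus\nu(F)\sutdec{S'}N''$, where $S'=S\cap(N\ssetminus\nu(F))$, and with $N'\sutdec{F'}N''$, which supplies the remaining edges and the commutativity. Finally, that $F'$ (and symmetrically $S'$) is a decomposition surface is a direct check of conditions (1)--(3) in the definition: minimality of $|S\cap F|$ forces the circles of $S\cap F$ to be essential and the arcs to be non-separating in both $S$ and $F$, which is precisely what is needed for $F'\cap\gamma'$.

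The step I expect to be the main obstacle is the bookkeeping in the local analysis. One has to check that the partitions $\partial(\cdot)=R_+\cup\gamma\cup(-R_-)$ that the two routes put on $N''$ literally agree, which means tracking orientations through the oriented sum near every component of $S\cap F$; and one has to confirm that the intermediate pieces are genuine \emph{product} annuli and disks, since tautness is only guaranteed to be preserved by product decompositions. A secondary subtlety, already visible in Gabai's hierarchy argument, is that an arc of $S\cap F$ must remain non-separating after the first decomposition for the second decomposition along $F'$ to be legitimate; this is not automatic for surfaces in arbitrary position and is exactly where the hypotheses that $|S\cap F|$ is minimal and that $x_N([F])+x_N([S])=x_N([F]+[S])$ are used.
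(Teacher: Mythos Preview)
Your proposal is correct and follows essentially the same route as the paper's sketch: show $F\oplus S$ is Thurston norm minimizing so that $N\setminus\nu(F\oplus S)$ is taut by Lemma~\ref{lem:closeddec}, then pass to $N''$ by decomposing along the annuli and disks $C$ and invoke Gabai's result that such decompositions preserve tautness. The paper only outlines this and defers the local bookkeeping to \cite[Lemma~3.4]{FK14} and \cite[Lemma~3.12]{Ga83}, whereas you spell out the product-piece comparison and the orientation tracking explicitly; both arrive at the same argument.
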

The main idea in the proof of that lemma is the following. The assumptions on $S$ and $F$ ensure that $F\oplus S$ is Thurston norm minimizing. Note that $M$ defined by $N\sutdec{F\oplus S} M$ is a taut sutured manifold, because $F\oplus S$ is Thurston norm minimizing. The same is true for $N'$. Now one can obtain $N''$ from $M$ by a decomposition surface only consisting of annuli or disks (see for example~\cite[Lemma 3.4]{FK14}). Then $N''$ is taut by \cite[Lemma 3.12]{Ga83}. 
We refer to \cite[Section 3]{Ga83} for more details.   
\begin{figure}
	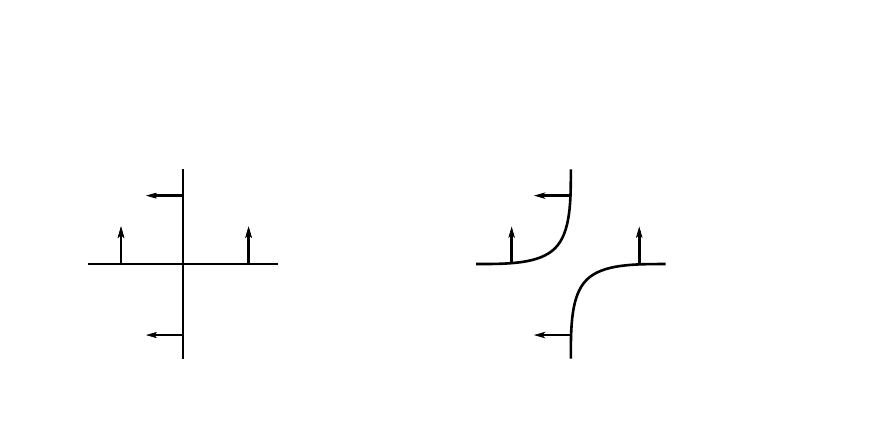
	\caption{Two oriented embedded surfaces $S$ and $F$ may intersect in circles or intervals. The above picture shows how to remove the intersection to get an oriented embedded surface which represents the homology class $[S]+[F]$.}\label{fig:orientedsum}
\end{figure}

Later we will need the double $DM(\gamma)$ of sutured manifold $(M,\gamma)$. It is defined by 
\[ DM(\gamma):= M \sqcup_{R_\pm} M. \]
This is a 3-manifold, which is closed or has toroidal boundary. It is also a sutured manifold with the sutured structure given by $(DM(\gamma),\emptyset,\emptyset,\partial DM(\gamma))$. Note that $R_+\cup R_-$ is a decomposition surface in $DM(\gamma)$ and if one decomposes $DM(\gamma)$ along $R_+\cup R_-$ one obtains two copies of $M$ as sutured manifolds. With this construction one can prove the following result which is analog to Theorem~\ref{thm:thurstonnormmult}.
\begin{prop}\label{prop: tautfinitecover}
	Let $(M,R_+,R_-,\gamma)$ be a sutured manifold. Let $\map{p}{N}{M}$ be a finite cover of $M$, then the preimage of $R_\pm$ and $\gamma$ under $p$ induces a sutured structure on $N$. If in addition $M$ is taut and $\gamma$ is incompressible, then the induced structure on $N$ is taut as well.
\end{prop}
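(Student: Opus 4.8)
The plan is to double the sutured manifold and then invoke Gabai's covering result, Theorem~\ref{thm:thurstonnormmult}.

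For the first, purely topological, assertion: restricting $p$ to boundaries gives a finite cover $\partial N\to\partial M$, so $\gamma':=p^{-1}(\gamma)$ is again a disjoint union of annuli and tori, the sets $p^{-1}(R_+)$ and $p^{-1}(R_-)$ are disjoint, and pulling back the orientation of $M$ orients $N$; over an annulus component $A$ of $\gamma$ every component $\widetilde A$ of $p^{-1}(A)$ is an annulus covering $A$ on which $p^{-1}(R_\pm)$ cuts out the preimage of $R_\pm\cap A$, so condition~(3) of the definition is inherited from the equality $[R_+\cap A]=[R_-\cap A]$ in $H_1(A;\Z)$. Hence $(N,R_+',R_-',\gamma')$ with $R_\pm':=p^{-1}(R_\pm)$ is a sutured manifold. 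Assume from now on that $M$ is taut and $\gamma$ is incompressible. Then $N$ is irreducible, since a finite cover of an irreducible $3$-manifold is irreducible (see \cite{AFW15}); and each component of $R_\pm'$ covers a component of $R_\pm$, which (being part of the taut structure) is incompressible and neither a disk nor a sphere, so $R_\pm'$ is incompressible in $N$ — via the factorisation $\pi_1(R_\pm')\hookrightarrow\pi_1(R_\pm)\hookrightarrow\pi_1(M)$ — and has no disk or sphere components.

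It remains to show that $R_+'$ and $R_-'$ are Thurston norm minimizing in $(N,\gamma')$. Since $(M,\gamma)$ is taut and $[R_+]=[R_-]$ in $H_2(M,\gamma;\Z)$, one has $\chi_-(R_+)=\chi_-(R_-)$. If this common value is $0$, then $R_\pm$, hence $R_\pm'$, consists only of annuli and tori, so $0\le x_N([R_\pm'])\le\chi_-(R_\pm')=0$ and, with the previous paragraph, $(N,\gamma')$ is already taut. So assume $\chi_-(R_+)=\chi_-(R_-)>0$; then $R_\pm\neq\emptyset$, and inside $DM(\gamma)=M\cup_{R_\pm}M$ the copy $\widehat R_+$ of $R_+$ is a non-separating surface, properly embedded in $(DM(\gamma),\partial DM(\gamma))$ because $\partial\widehat R_+\subset\gamma$ becomes part of $\partial DM(\gamma)$ (and $DM(\gamma)\setminus\widehat R_+$ stays connected along $\widehat R_-$). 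The crucial step is the claim that $\widehat R_+$ \emph{is Thurston norm minimizing in} $DM(\gamma)$. I would prove it by a cut-and-minimize argument: choose a surface $S$ realizing $x_{DM(\gamma)}([\widehat R_+])$ with no disk or sphere components — possible because $DM(\gamma)$ is irreducible, $M$ being irreducible and $R_\pm,\gamma$ incompressible — isotope $S$ into minimal position with respect to the two-sided incompressible surface $\widehat R_+\cup\widehat R_-$, remove trivial circles and $\partial$-parallel arcs of the intersection, and cut $S$ along $\widehat R_+\cup\widehat R_-$; the two resulting pieces are properly embedded surfaces in the two copies of $(M,\gamma)$, and a Mayer--Vietoris analysis of their homology classes combined with the tautness of $(M,\gamma)$ forces $\chi_-(S)\ge\chi_-(R_+)=\chi_-(\widehat R_+)$. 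Alternatively, by Gabai a taut $(M,\gamma)$ carries a taut foliation having $R_\pm$ among its leaves; doubling gives a taut foliation of $DM(\gamma)$ with $\widehat R_+$ a closed leaf, which is norm minimizing by Thurston~\cite{Th86}.

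Granting the claim, the argument concludes as follows. The cover $p$ induces a finite cover $Dp\colon DN(\gamma')\to DM(\gamma)$ of the same degree, restricting to $p$ on each of the two copies of $N$ making up the double $DN(\gamma')$ of $(N,\gamma')$, and $Dp^{-1}(\widehat R_+)$ is exactly the corresponding copy $\widehat R_+'$ of $R_+'$. By Theorem~\ref{thm:thurstonnormmult}, $\widehat R_+'$ is Thurston norm minimizing in $(DN(\gamma'),\partial DN(\gamma'))$. Finally, any properly embedded surface $S$ in $(N,\gamma')$ with $[S]=[R_+']\in H_2(N,\gamma';\Z)$ can be isotoped into the interior of one copy of $N$ in $DN(\gamma')$, where it becomes a properly embedded competitor for $[\widehat R_+']$ of the same complexity; hence $\chi_-(S)\ge x_{DN(\gamma')}([\widehat R_+'])=\chi_-(R_+')$, so $x_N([R_+'])=\chi_-(R_+')$, and likewise for $R_-'$. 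With the first paragraph this shows that the induced sutured structure on $N$ is taut. The main obstacle is the claim that $\widehat R_+$ is norm minimizing in the double; in the cut-and-minimize approach the delicate point is to control, after cutting $S$ along $\widehat R_+\cup\widehat R_-$, the homology classes of the two pieces in the copies of $(M,\gamma)$ — in particular to ensure that the circles and arcs newly created on the equatorial surface do not let either piece represent a cheaper class.
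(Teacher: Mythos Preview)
Your approach is essentially the paper's: double the sutured manifold, show the $R_\pm$–copies are Thurston norm minimizing there, pull this up via Theorem~\ref{thm:thurstonnormmult}, and push back down to $(N,\gamma')$. The paper's own proof is much terser because it simply cites \cite[Lemma~3.7]{Ga83} for what you flag as the ``main obstacle'' (your foliation alternative is in fact the idea behind Gabai's argument), and it works with $R_+\cup R_-$ rather than $\widehat R_+$ alone, finishing via Lemma~\ref{lem:closeddec} instead of your direct competitor argument. Your cut-and-minimize sketch would indeed be delicate to complete; just citing \cite[Lemma~3.7]{Ga83} avoids this entirely.
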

\begin{proof}
The first assertion of the proposition is clear. Hence we only show that if $M$ is taut and $\gamma$ is incompressible, then $N$ is taut.  Under these two assumptions one has by a result of Gabai (\cite[Lemma 3.7]{Ga83}) that $R_-\cup R_+$ is Thurston norm minimizing in $H_2 ( DM(\gamma),\partial DM(\gamma);\Z)$. The proposition follows from Theorem~\ref{thm:thurstonnormmult}, Lemma~\ref{lem:closeddec}, and the fact that a finite cover of an irreducible 3-manifold is again irreducible \cite[Theorem 3]{MSY82}.   
\end{proof}

\subsection{Incompressible decomposition surfaces in a product sutured manifold}
We quickly recall a result of Waldhausen \cite{Wa68}, which will be essential in the proof of Lemma~\ref{lem:quasifiber}. Let $F$ be a connected surface possibly with boundary and $F\not\cong S^2$, then we endow $F\times[-1,1]$ with the product sutured manifold structure $(F, F_+, F_-, \partial F\times I)$, where $F_+= F\times\left\{1\right\}$ and $F_-= F\times\left\{-1\right\}$. We denote by $p\colon F\times[-1,1]\to F$ the canonical projection. A properly embedded surface $S$ in $F\times [-1,1]$ is called $\emph{horizontal}$ if $p|_{S}$ is a homeomorphism onto its image.
\begin{rem} \label{rem:isotopic}
	A horizontal surface $S$ is by definition homoemorphic to a subsurface of $F$, so that we can view it as an embedding $S\to F\times[-1,1], x\mapsto (x, h(x))$. Therefore $S$ is isotopic to a subsurface of $F_-$ (resp. $F_+$) by pushing (resp. lifting) the interval factor.
\end{rem}
Given a connected incompressible decomposition surface $S$ in $F\times[-1,1]$ there are evidently two possibilities:
\begin{enumerate}
 \item $S$ intersects $F_+$ and $F_-$,
 \item $S\cap F_+=\emptyset$ or $S\cap F_-=\emptyset$.
\end{enumerate}
Waldhausen~\cite[Proposition 3.1]{Wa68} showed that in the second case if $S\cap F_+=\emptyset$ (resp. $S\cap F_-=\emptyset)$, then $S$ is ambient isotopic to a horizontal surface via an ambient isotopy fixing $F_\pm$.
\begin{rem}\label{rem:parallel}
	The second case also includes the possibility that $S$ intersects neither $F_+$ nor $F_-$. In this situation $S$ is ambient isotopic to $F\times\left\{t\right\}$ for $t\in(-1,1)$.
\end{rem}
Later we need the following lemma, which easily follows from Waldhausen's result.

\begin{lemma}\label{lem:waldhausen}
Let $N\not\cong S^1\times S^2$ be a connected 3-manifold which fibers over $S^1$ with fiber $F$. We fix an embedding of $F$ and an identification $N\setminus\nu(F)\cong F\times[-1,1]$. Let $\Sigma$ be an incompressible not necessarily connected surface in $N$ such that $F\cap \Sigma$ has minimal number of connected components compared to all other embeddings in the isotopy class of $\Sigma$. Then one of the following holds:
\begin{enumerate}
	\item $\Sigma$ consists of parallel copies of surfaces, all isotopic to $F$,
	\item every component of $\Sigma':=\Sigma\cap N\setminus\nu(F)$ intersects $F_+$ and $F_-$. 
	\end{enumerate} 
\end{lemma}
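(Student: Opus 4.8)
The plan is to reduce Lemma~\ref{lem:waldhausen} to Waldhausen's result (as recalled above) by working one component of $\Sigma$ at a time, together with an innermost/minimality argument to rule out the ``mixed'' situation in which some but not all components of $\Sigma'$ avoid $F_+$ or $F_-$. First I would observe that since $N\not\cong S^1\times S^2$ and $N$ fibers with fiber $F$, we have $F\not\cong S^2$, so $F\times[-1,1]$ carries the product sutured structure to which Waldhausen's \cite[Proposition 3.1]{Wa68} applies. After isotoping $\Sigma$ so that $F\cap\Sigma$ has the minimal number of components, each component $S$ of $\Sigma$ meets $N\setminus\nu(F)=F\times[-1,1]$ in a disjoint union of properly embedded incompressible subsurfaces (incompressibility of $\Sigma$ passes to these pieces because $F$ is incompressible in $N$, being a fiber). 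For each such piece $P$ there are the two cases recalled before the lemma: either $P$ meets both $F_+$ and $F_-$, or $P$ misses one of them, in which case by Waldhausen $P$ is ambient isotopic (rel $F_\pm$) to a horizontal surface, hence (Remark~\ref{rem:isotopic}, Remark~\ref{rem:parallel}) isotopic into $F_-$ or to some $F\times\{t\}$.

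The key step is to show these two alternatives cannot be mixed once the number of components of $F\cap\Sigma$ is minimal, and that if any piece is horizontal then conclusion~(1) holds globally. If some component $P_0$ of $\Sigma'$ is horizontal and does not meet both $F_\pm$, I would use the ambient isotopy from Waldhausen to push $P_0$ off $F$; concretely, a horizontal $P$ is isotopic to a parallel copy of a subsurface of $F$, and because $\Sigma$ is incompressible with no sphere components and $F$ is a fiber (so $[F]$ is Thurston-norm minimizing and $\pi_1$-injective), a standard innermost-curve / outermost-arc argument shows any horizontal piece that is not all of a parallel copy of $F$ could be isotoped to reduce $|F\cap\Sigma|$, contradicting minimality. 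Hence a horizontal piece must be a full parallel copy of $F$, and then the component $S$ of $\Sigma$ containing it, being connected and incompressible and containing a copy of $F\times\{t\}$, must itself be isotopic to $F$ (it cannot be glued to anything else across $F$ without raising $|F\cap\Sigma|$). Running this over all components of $\Sigma$ yields: either every component of $\Sigma$ is a parallel copy of $F$ — conclusion~(1) — or no component of $\Sigma'$ is horizontal, i.e.\ every component of $\Sigma'$ meets both $F_+$ and $F_-$ — conclusion~(2).

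I expect the main obstacle to be the bookkeeping in the innermost-argument that upgrades ``$P$ is horizontal'' to ``$P$ is a parallel copy of all of $F$ or else $|F\cap\Sigma|$ is not minimal.'' One has to be careful that reducing intersections of one piece does not create new intersections elsewhere and that the isotopy can be taken ambient in $N$ rather than just in the sutured piece $F\times[-1,1]$; this is where the hypothesis that $F\cap\Sigma$ is component-minimal over the whole isotopy class of $\Sigma$ (not just piecewise) does the work, and where Remark~\ref{rem:isotopic} and Remark~\ref{rem:parallel} are invoked to turn a horizontal surface into a surface disjoint from $F$ after an isotopy. A secondary point to check is that incompressibility is inherited by the components $\Sigma\cap(N\setminus\nu(F))$: since $F$ is $\pi_1$-injective in $N$, a compressing disk for a piece would give, after cutting and pasting with $F$, a compressing disk or a reduction of $|F\cap\Sigma|$ for $\Sigma$, again contradicting the hypotheses. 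Once these are in place, the statement follows by simply listing the cases.
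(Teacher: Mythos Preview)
Your proposal is correct and follows essentially the same approach as the paper: reduce to Waldhausen's \cite[Proposition~3.1]{Wa68} to classify the pieces of $\Sigma'$ in $F\times[-1,1]$, then use an innermost argument together with the minimality of $|F\cap\Sigma|$ to eliminate pieces that meet only one of $F_\pm$. The paper organises the case analysis slightly differently---it first treats the case $\Sigma\cap F=\emptyset$ directly via Remark~\ref{rem:parallel}, and then shows that any component of $\Sigma'$ meeting $F_-$ but not $F_+$ can (after being made horizontal by an ambient isotopy fixing $F_\pm$) be pushed off $F_-$ using Remark~\ref{rem:isotopic}, contradicting minimality---but the ingredients and the logic are the same as yours.
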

\begin{proof}	
A standard argument using the irreducibility of $N$ and our hypothesis on $F\cap\Sigma$ shows, that $F\cap \Sigma$ is incompressible and hence $\Sigma'$ is incompressible in $F\times[-1,1]$. If $\Sigma'$ does not intersect $F_+\cup F_-$, then by Remark ~\ref{rem:parallel} every component of $\Sigma'$ is ambient isotopic to $F\times\left\{t\right\}$ for some $t\in(-1,1)$. Therefore $\Sigma$ consist of parallel copies of $F$. Now let $C$ be connected component of $\Sigma'$, which intersects $F_-$ at least once but does not intersect $F_+$. Then there is an ambient isotopy fixing $F_\pm$, which makes $C$ into a horizontal surface. Since this isotopy fixes $F_\pm$, this isotopy extends to an isotopy of $\Sigma$ in $N$. If we further assume that $C$ is an innermost among such a connected component, then one can use the isotopy from Remark~\ref{rem:isotopic} to remove the intersection component which corresponds to $C\cap F_-$. But this contradicts our assumptions on $F\cap\Sigma$ and hence $C$ has to intersect $F_+$. The same argument with the roles of $F_+$ and $F_-$ interchanged proves the lemma.
\end{proof}

\subsection{$\ell^2$-Betti numbers}
Here we introduce $\ell^2$-Betti numbers and discuss some of their basic properties. For more details on $\ell^2$-Betti numbers and proofs of the facts stated here we refer to \cite[Chapter 1]{Lu02}.

Let $G$ be a group. We endow $\C[G]$ with the pre-Hilbert space structure for which $G$ is a orthonormal basis and denote by $\ell^2(G)$ the closure of $\C[G]$ with respect to the norm induced from the scalar product. Multiplication with elements in $G$ induces an isometric left $G$-action on $\ell^2(G)$. We define the group von Neumann algebra $\mathcal{N}(G)$ to be the set of all bounded linear operators from $\ell^2(G)$ to itself which commute with this left action. 

\begin{defn}
	Let $\widehat{X}$ be a $CW$-complex on which a group $G$ acts co-compactly, freely and cellularly. Then $C^{CW}_\ast (\widehat X)$ is a finite free $\Z[G]$-module and we can consider the chain complex
	\[\CC_\ast(\widehat{X};\mathcal{N}(G)):=\ell^2(G)\otimes_{\Z[G]} C^{CW}_\ast (\widehat X) \] with the $\ell^2$-homology defined by
	\[\HH_i(\widehat{X};\mathcal{N}(G)):= \ker (\id\otimes \partial_{i}) / \overline{\image (\id\otimes \partial_{i+1})},\]
	where $\overline{\image (\id\otimes \partial_{i-1})}$ denotes the closure of $\image (\id\otimes \partial_{i-1})$ with respect to the Hilbert space structure on $\ell^2(G)$.
	The $\ell^2$-Betti numbers are then given by 
	\[ b^{(2)}_i(\widehat{X};\mathcal{N} (G))=\dimNG{G}\HH_i(\widehat{X};\mathcal{N}(G)), \]
	where $\dimNG{G}$ denotes the von Neumann dimension \cite[Definition 1.10]{Lu02}.
	Moreover, we define the $\ell^2$-Euler characteristic by 
	\[\chi^{(2)}(\widehat{X};\mathcal{N}(G)):=\sum_{i\in\N_0}(-1)^i\cdot \BB_i(\widehat{X};\mathcal{N}(G)). \]
\end{defn}
\begin{rem}
These numbers only depend on the homotopy type of $\widehat{X}/G$ and are especially independent of the choice of CW-structure. 	
\end{rem}

The von Neumann dimension behaves very similarly to the ordinary dimension. For example it is additive under weakly short exact sequences. With this property one easily shows that $\ell^2$-Euler characteristic and Euler characteristic are related by
\begin{align}\label{eq:eulerchar}
	\chi^{(2)}(\widehat{X};\mathcal{N}(G))=\chi(\widehat{X}/G).
\end{align}
 One of the main features of $\ell^2$-Betti numbers is given by the next proposition.
\begin{prop}\label{prop:multiplicativ}
	Let $\widehat{X}$ be a $CW$-complex on which a group $G$ acts co-compactly, freely and cellularly. Let $H\triangleleft G$ be a finite index subgroup. Then 
	\[\BB_i(\widehat{X};\mathcal{N}(H))=[G:H] \cdot \BB_i(\widehat{X};\mathcal{N}(G)).\]
\end{prop}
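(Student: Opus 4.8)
The plan is to compare the two chain complexes $\CC_\ast(\widehat X;\mathcal N(H))$ and $\CC_\ast(\widehat X;\mathcal N(G))$ directly, using the restriction functor on Hilbert modules. First I would recall that $\ell^2(G)$, viewed as a Hilbert $\mathcal N(H)$-module via the inclusion $\C[H]\hookrightarrow\C[G]$, decomposes as a direct sum of $[G:H]$ copies of $\ell^2(H)$: choosing coset representatives $g_1,\dots,g_n$ for $G/H$ (here $n=[G:H]$), one has $\ell^2(G)=\bigoplus_{j=1}^n \ell^2(H)g_j$ as left $\C[H]$-modules, and this is an isometric isomorphism of Hilbert $\mathcal N(H)$-modules. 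Consequently, for any finite free $\Z[G]$-module $F$, there is a natural isometric $\mathcal N(H)$-isomorphism $\ell^2(G)\otimes_{\Z[G]} F \cong \ell^2(H)^{\oplus n}\otimes_{\Z[H]} F'$, where $F'$ is $F$ regarded as a free $\Z[H]$-module (of rank $n$ times the $\Z[G]$-rank). The key observation is that this identification is natural, hence compatible with the boundary maps, so it induces an isometric isomorphism of chain complexes $\CC_\ast(\widehat X;\mathcal N(H)) \cong \bigl(\CC_\ast(\widehat X;\mathcal N(G))\bigr)^{\oplus n}$ after restricting scalars along $\mathcal N(G)\hookleftarrow\mathcal N(H)$ — more precisely, restriction of scalars along $\C[H]\hookrightarrow\C[G]$ intertwines the two tensor constructions.

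Next I would pass to homology. Since $\ell^2$-homology is $\ker/\overline{\image}$ and isometric isomorphisms of chain complexes preserve kernels and closures of images, the isomorphism above descends to an isometric $\mathcal N(H)$-isomorphism $\HH_i(\widehat X;\mathcal N(H)) \cong r\bigl(\HH_i(\widehat X;\mathcal N(G))\bigr)^{\oplus n}$, where $r$ denotes restriction of scalars from $\mathcal N(G)$ to $\mathcal N(H)$. It then remains to compare von Neumann dimensions. The final ingredient is the standard compatibility of the von Neumann dimension with restriction to a finite-index subgroup: for any finitely generated Hilbert $\mathcal N(G)$-module $V$ one has $\dimNG{H}(r(V)) = [G:H]\cdot\dimNG{G}(V)$. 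Applying this to $V=\HH_i(\widehat X;\mathcal N(G))$, together with additivity of the dimension over finite direct sums, yields
\[
\BB_i(\widehat X;\mathcal N(H)) = \dimNG{H}\HH_i(\widehat X;\mathcal N(H)) = n\cdot\dimNG{H}\bigl(r(\HH_i(\widehat X;\mathcal N(G)))\bigr),
\]
and this already equals $[G:H]^2\cdot\BB_i(\widehat X;\mathcal N(G))$ — which is off by a factor $[G:H]$, so I must be careful: the $n$-fold direct sum over $\mathcal N(H)$ is the right statement, and the dimension-restriction formula should be applied in the form that makes the $n$'s match. The cleanest route is to note that $r(\ell^2(G)) \cong \ell^2(H)^{\oplus n}$ gives $\dimNG{H} r(\ell^2(G)) = n$, while $\dimNG{G}\ell^2(G)=1$, i.e. $\dimNG{H}\circ r = [G:H]\cdot\dimNG{G}$ on the level of free modules, and the same scaling then holds for all finitely generated Hilbert modules; combined with $\HH_i(\widehat X;\mathcal N(H))\cong r(\HH_i(\widehat X;\mathcal N(G)))$ (no extra direct sum, because the $n$ coset copies are already absorbed in the restriction), one gets exactly $\BB_i(\widehat X;\mathcal N(H)) = [G:H]\cdot\BB_i(\widehat X;\mathcal N(G))$.

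The main obstacle, and the point requiring genuine care, is the bookkeeping in the previous paragraph: keeping straight whether the factor $[G:H]$ enters once (through the dimension-restriction identity $\dimNG{H}\circ r = [G:H]\cdot\dimNG{G}$) or is double-counted by also writing an explicit $n$-fold direct sum. The correct picture is that $\CC_\ast(\widehat X;\mathcal N(H))$ is genuinely the restriction $r\bigl(\CC_\ast(\widehat X;\mathcal N(G))\bigr)$ of the $\mathcal N(G)$-chain complex — not a direct sum of $n$ copies of it — because $\ell^2(H)\otimes_{\Z[H]}(-)$ applied to a $\Z[G]$-module that happens to be induced is the same Hilbert space as $\ell^2(G)\otimes_{\Z[G]}(-)$ with its $\mathcal N(H)$-structure. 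Once this identification $\CC_\ast(\widehat X;\mathcal N(H)) = r\,\CC_\ast(\widehat X;\mathcal N(G))$ is made precise (a direct check on a cellular basis, using that $C^{CW}_\ast(\widehat X)$ is $\Z[G]$-free hence $\Z[H]$-free), the statement is immediate from $\HH_i$ commuting with the isometric identification and from $\dimNG{H}\circ r = [G:H]\cdot\dimNG{G}$, the latter being \cite[Theorem 1.12(6)]{Lu02} or derivable from the trace formula $\tr_{\mathcal N(H)}(r(a)) = [G:H]\cdot\tr_{\mathcal N(G)}(a)$ for $a\in\mathcal N(G)$. I would present the proof in exactly this order: (i) identify the restricted chain complex, (ii) take homology, (iii) apply the dimension-restriction formula.
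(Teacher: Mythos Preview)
The paper does not actually prove this proposition: it is stated as one of the basic properties of $\ell^2$-Betti numbers, with the blanket reference to \cite[Chapter~1]{Lu02} covering all such facts. So there is no ``paper's own proof'' to compare against beyond the standard argument in L\"uck's book.

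Your final argument is correct and is exactly the standard one: identify $\CC_\ast(\widehat X;\mathcal N(H))$ with the restriction $r\bigl(\CC_\ast(\widehat X;\mathcal N(G))\bigr)$ via $\ell^2(H)\otimes_{\Z[H]}\Z[G]\cong\ell^2(G)$, pass to reduced homology, and invoke the dimension--restriction identity $\dimNG{H}\circ r=[G:H]\cdot\dimNG{G}$ (indeed \cite[Theorem~1.12(6)]{Lu02}). That is a complete proof, and incidentally does not use that $H$ is normal in $G$.

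What you should fix is the presentation. The middle of your write-up records a wrong turn --- the claimed isomorphism $\CC_\ast(\widehat X;\mathcal N(H))\cong\bigl(\CC_\ast(\widehat X;\mathcal N(G))\bigr)^{\oplus n}$ after restriction, which would yield a factor $[G:H]^2$ --- and then corrects it in real time. In a final version, delete the false start entirely and go straight to the three-step argument you outline at the end: (i) the chain-level identification $\CC_\ast(\widehat X;\mathcal N(H))=r\,\CC_\ast(\widehat X;\mathcal N(G))$, checked on a cellular $\Z[G]$-basis; (ii) the induced isometric identification on $\HH_i$; (iii) the dimension formula. The factor $[G:H]$ enters exactly once, through (iii), because the coset decomposition $\ell^2(G)\cong\ell^2(H)^{\oplus n}$ is what \emph{produces} the restriction identity, not something to be applied on top of it.
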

%\begin{lemma}
	%Let $X$ be CW-complex and  $Y\subset X$ a sub complex. Such that every %connected component is $\pi_1$-injective. Denote by $\map{p}{\widetilde{X}}{X}$ the $universal cover. Then 
	%\[\dimNG{\pi_1(X)} \HH_i (p^{-1}({Y}); \mathcal{N}(\pi_1(X)) ) = \dimNG{{\pi_1(Y)}} %\HH_i(Y)  \]
%\end{lemma}
In this paper we deal with two special cases of groups acting on CW-complexes which therefore obtain a shortened notation. If $X$ is a finite connected $CW$-complex with fundamental group $\pi$, we denote by the corresponding universal cover $p\colon\widetilde{X}\to X$ and then we define the $\ell^2$-homology of $X$ by:
\[ \HH_i (X):=\HH_i(\widetilde{X};\mathcal{N}(\pi)).\]
We write $\BB_i(X)=\dimNG{\pi} H_i^{(2)}(X)$ and in the case that $X=X_1\sqcup\ldots\sqcup X_n$ has several connected components we set $b_i^{(2)}(X):=\sum_{k=1}^{n}b_i^{(2)}(X_k)$.

Moreover, if $Y\subset X$ is a subcomplex, then we set $\widetilde{Y}:=p^{-1}(Y)$ and define
\begin{align*}
\HH_i (Y \subset X)&:=\HH_i(\widetilde{Y};\mathcal{N}(\pi)),\quad b_i^{(2)}(Y\subset X):=\dimNG{\pi} \HH_i(\widetilde{Y};\mathcal{N}(\pi)), \\
\HH_i (X,Y)&:=\HH_i(\widetilde{X}, \widetilde{Y};\mathcal{N}(\pi)),\quad \BB_i(X,Y):= \dimNG{\pi} \HH_i(\widetilde{X}, \widetilde{Y};\mathcal{N}(\pi)).
\end{align*} 
With this pullback of coefficients one has Mayer-Vietoris sequences and the long exact sequence associated to a pair. Proposition~\ref{prop:multiplicativ} and Equation~(\ref{eq:eulerchar}) holds equally in the relative case. Moreover, if the inclusion $Y\to X$ induces a monomorphism on the fundamental group for any choice of base-point, then one has by the induction principle \cite[Theorem 1.35(10)]{Lu02}:
\begin{equation}\label{eq:induction}
b_i^{(2)}(Y\subset X)=b_i^{(2)} (Y).
\end{equation}
If $\phi\colon\pi\rightarrow \Z$ is an epimorphism, we denote by $\widehat{X}$ the covering corresponding to $\ker\phi$ and introduce the notation
\[H_\ast^{\phi,(2)}(X):=\HH_\ast(\widehat{X};\,\mathcal{N}(\Z) ). \]
Let $\langle t\rangle\cong \mathbb{Z}\cong\pi/\ker \phi$ denote a generator, then by \cite[Lemma 1.34]{Lu02} one has
\begin{align}\label{eq:dimfielddimneu}
\dimNG{\Z}H_i^{\phi,(2)}(X)=\dim_{\C(t)} H_i(X;\C(t)^{\phi} ),
\end{align}
where $H_i(X;\C(t)^{\phi})$ denotes the homology of the chain complex $\C (t)\otimes_{\Z[t^{\pm}]} C^{CW}_\ast (\widehat{X})$.
%\begin{lemma}\label{lem:detweakiso}
%	Let $A\in M_n(\C[t^{\pm}])$ and denote by $r_A\colon l^2(\Z)\rightarrow l^2(\Z)$ the operator given by right multiplication. Then $r_A$ is a weak isomorphism if and only if $\det A\neq 0$.
%\end{lemma}

\subsection{Approximation of $\ell^2$-Betti numbers}
In this paragraph we recall the L\"uck-Schick approximation result of $\ell^2$-Betti numbers. In order to state the theorem and that it applies in our situation we need some preliminaries. 
\begin{defn}
	Let $\mathcal{G}$ be the smallest class of groups which contains the
	trivial group and is closed under the following processes:
\begin{enumerate}
	\item\label{en:amenable} If $H<\pi$ is a normal subgroup such that $H\in\mathcal{G}$ and $\pi/H$ is amenable then $\pi \in\mathcal{G}$.
	\item If $\pi$ is the direct limit of a directed system of groups $\pi_i\in\mathcal{G}$, then $\pi \in \mathcal G$.
	\item\label{en:invlim} If $\pi$ is the inverse limit of a directed system of groups $\pi_i\in\mathcal{G}$, then $\pi \in \mathcal G$.
	\item\label{en:subgroup} The class $G$ is closed under taking subgroups. 
\end{enumerate}
\end{defn}

The precise definition of an amenable group doesn't play a role for this article. We only need that finite groups are amenable. This is sufficient to prove the following lemma.
\begin{lemma}\label{lem:classg}
	Every fundamental group of a compact 3-manifold lies in $\mathcal{G}$.
\end{lemma}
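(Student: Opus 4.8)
The plan is to use the known structure theory of compact 3-manifold groups, in particular the geometrization theorem, together with the closure properties of the class $\mathcal{G}$. First I would reduce to the case of a compact 3-manifold $M$ that is orientable and has a prime (indeed JSJ) decomposition: by Proposition~\ref{prop:multiplicativ}-style arguments (really just closure of $\mathcal{G}$ under subgroups, item~(\ref{en:subgroup})) one may pass to a finite cover, so it suffices to treat orientable $M$. Taking the prime decomposition $M = M_1 \# \cdots \# M_k$, the fundamental group is a free product $\pi_1(M_1) * \cdots * \pi_1(M_k)$, and since $\mathcal{G}$ contains free groups (a free group is residually finite, hence an inverse limit of finite groups, so it lies in $\mathcal{G}$ by items (\ref{en:invlim}) and the fact that finite groups are in $\mathcal{G}$ via (\ref{en:amenable})) and is closed under free products (a free product $A*B$ maps onto $\Z$ with kernel an infinite free product of conjugates of $A$ and $B$, which can be built as a direct limit of groups that are themselves amenable-by-$\mathcal{G}$; alternatively one can cite that $\mathcal{G}$ is closed under amalgamated products over amenable subgroups, which is standard), it suffices to prove the statement for each prime factor. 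So I reduce to $M$ irreducible (the only non-irreducible prime is $S^1 \times S^2$, whose group $\Z$ is amenable, hence in $\mathcal{G}$).

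Next, for irreducible orientable $M$ I would invoke geometrization / the JSJ decomposition. If $\partial M \neq \emptyset$ or $M$ is Haken, then $\pi_1(M)$ is built from the fundamental groups of the geometric JSJ pieces by iterated amalgamation along $\Z^2$ subgroups (tori). Each geometric piece has fundamental group in $\mathcal{G}$: Seifert fibered pieces have a normal cyclic subgroup with quotient a $2$-orbifold group (virtually a surface group, hence residually finite and so in $\mathcal{G}$ by the inverse-limit property), so they lie in $\mathcal{G}$ by item~(\ref{en:amenable}) applied with the amenable (cyclic) normal subgroup; hyperbolic pieces are residually finite (Malcev, or Agol/Wise for the cusped case) hence residually finite and therefore inverse limits of finite groups, so in $\mathcal{G}$; Sol-manifolds have virtually solvable, hence amenable, fundamental group. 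Since $\Z^2$ is amenable, and $\mathcal{G}$ is closed under amalgamated free products and HNN extensions over amenable subgroups — this is the key structural closure property one extracts from items (\ref{en:amenable})--(\ref{en:subgroup}), and it is exactly the reason $\mathcal{G}$ was defined this way — the group $\pi_1(M)$ lies in $\mathcal{G}$. The remaining case is a closed irreducible $M$ that is \emph{not} Haken with infinite $\pi_1$; by geometrization this is either hyperbolic (residually finite, done), a closed Seifert manifold (done as above), or Sol (impossible for non-Haken), so this case is covered too. Finite $\pi_1$ is trivially handled since finite groups are in $\mathcal{G}$.

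I would organize the write-up as: (1) closure of $\mathcal{G}$ under free products and amalgamations/HNN over amenable subgroups, deduced formally from the axioms (this is where one must be a little careful — the cleanest route is to observe that if $G = A *_C B$ with $C$ amenable and $A, B \in \mathcal{G}$, then $G$ acts on its Bass--Serre tree, the kernel of the map to the fundamental group of the underlying graph is a direct limit of groups commensurable with subgroups of $A$ and $B$, etc.; alternatively cite that this closure is established in the literature on the class $\mathcal{G}$); (2) the prime decomposition reduction; (3) the JSJ/geometrization reduction to geometric pieces; (4) verification that each of the eight geometries (or really: residually finite, or amenable, or amenable-by-(residually finite)) gives a group in $\mathcal{G}$. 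The main obstacle is step (1): showing $\mathcal{G}$ is closed under amalgamated products over amenable subgroups is not completely formal from the stated axioms and is the technical heart; everything else is bookkeeping with geometrization plus residual finiteness of 3-manifold groups. In practice, for a paper at this level, I would state step (1) with a precise reference (e.g. to Schick's work or Lück's book, where the class $\mathcal{G}$ and its closure properties are developed) and then give the short geometrization argument for steps (2)--(4).
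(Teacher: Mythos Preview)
Your approach works in principle but is far more elaborate than necessary, and the one genuinely delicate step you flag --- closure of $\mathcal{G}$ under amalgamated products and HNN extensions over amenable subgroups --- is not actually needed. The paper's proof is a three-line argument: finite groups lie in $\mathcal{G}$ by item~(\ref{en:amenable}); hence profinite completions lie in $\mathcal{G}$ by item~(\ref{en:invlim}); hence every residually finite group, being a subgroup of its profinite completion, lies in $\mathcal{G}$ by item~(\ref{en:subgroup}); and every compact 3-manifold group is residually finite \cite[Chapter~3 C.29]{AFW15}.

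You already have all the ingredients for this short argument in your own write-up --- you invoke ``residually finite $\Rightarrow$ inverse limit of finite groups $\Rightarrow$ in $\mathcal{G}$'' repeatedly for the individual JSJ pieces --- but you then discard this observation at the global level and instead try to reassemble $\pi_1(M)$ from the pieces via graph-of-groups closure properties. That detour is where the difficulty you identify in your step~(1) appears, and it is entirely avoidable: residual finiteness is already known for the \emph{whole} group $\pi_1(M)$ (this is Hempel's theorem, a consequence of geometrization), so there is no need to decompose, verify piecewise, and reassemble. Your route would ultimately succeed if the amalgamation closure property can be cited, but it trades a one-line appeal to a standard fact for a substantial structural argument plus a nontrivial closure lemma.
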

\begin{proof}
	By fact (\ref{en:amenable}) every finite group lies in $\mathcal{G}$. Then by fact (\ref{en:invlim}) the profinite completion of a group lies in $\mathcal{G}$. Since residually finite groups are subgroups of their profinite completation we have by fact (\ref{en:subgroup}) that all residually finite groups are in $\mathcal{G}$. So the lemma follows from the fact that all 3-manifold groups are residually finite \cite[Chapter 3 C.29]{AFW15}.
\end{proof}

We are now able to state the approximation result of Schick which extended earlier results by L\"uck~\cite{Lu94}.
\begin{theorem}\cite[Theorem 1.14]{Sch01}\label{thm:appschick}
	Let $X$ be a CW-complex with a free cellular and co-compact action of a group $G$ in the class $\mathcal{G}$.  Let $G= G_1\supset G_2\supset \ldots$ be a nested sequence of normal subgroups such that $\cap_{i\in \N} G_i=\left\{e\right\}$. Denote by $X_i=X/G_i$ and by $\Gamma_i=G/G_i$ the corresponding quotients. Further assume that $\Gamma_i\in\mathcal{G}$ for all $i\in\N$, then one has for all $p\in\Z$
	\[	\lim_{i\to\infty} \BB_p (X_i;\mathcal{N}(\Gamma_i)) = \BB_p(X;\mathcal{N}(G)). \]
\end{theorem}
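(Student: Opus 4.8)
The plan is to reduce the statement to the convergence of a single sequence of kernel dimensions attached to an integer matrix, and then to run the spectral-measure argument of L\"uck, using Schick's determinant bound for the class $\mathcal{G}$ to rule out escape of spectrum towards $0$.

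First I would fix a CW-structure on $X/G$ and lift it to $X$, so that $C^{CW}_p(X)$ is free over $\Z[G]$ of rank $c_p$ (the number of $p$-cells) and $\partial_p=:A_p$ is a matrix over $\Z[G]$, whose image $A_{p,i}$ over $\Z[\Gamma_i]$ is the corresponding boundary matrix for $X_i=X/G_i$. Since the von Neumann dimension is additive on the weakly exact sequence $0\to\ker A_p\to\CC_p\to\overline{\image A_p}\to 0$, one gets
\[ \BB_p(X;\mathcal N(G)) = \dimNG{G}\ker A_p - c_{p+1} + \dimNG{G}\ker A_{p+1}, \]
and the same identity at each finite level with $\dimNG{\Gamma_i}$. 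Hence it suffices to prove, for a fixed matrix $A\in M_{m,n}(\Z[G])$ and its images $A_i\in M_{m,n}(\Z[\Gamma_i])$, that $\dimNG{\Gamma_i}\ker A_i\to\dimNG{G}\ker A$.

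Next I would pass to the spectral measures of $A^\ast A$ and $A_i^\ast A_i$. These are positive self-adjoint operators of norm at most $K:=\|A\|_1^2$ (independent of $i$, since the matrix entries are the same integers; we may assume $A\neq 0$, so $K\geq 1$). Let $\mu$, resp.\ $\mu_i$, be the spectral measure of $A^\ast A$, resp.\ $A_i^\ast A_i$, with respect to the canonical trace on $\mathcal N(G)$, resp.\ $\mathcal N(\Gamma_i)$; these are positive measures on $[0,K]$ of total mass $n$ with $\mu(\{0\})=\dimNG{G}\ker A$ and $\mu_i(\{0\})=\dimNG{\Gamma_i}\ker A_i$, and their $k$-th moments are $\tr((A^\ast A)^k)$ and $\tr((A_i^\ast A_i)^k)$. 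Because the canonical trace of an element of the integral group ring is its coefficient at the identity, and because $(A^\ast A)^k$ has finite support while $\bigcap_i G_i=\{e\}$, for each fixed $k$ the $k$-th moment of $\mu_i$ equals that of $\mu$ for all large $i$. Convergence of all moments on the common compact interval $[0,K]$ gives weak convergence $\mu_i\to\mu$; since $\{0\}$ is closed and the total masses agree, this already yields $\limsup_i\mu_i(\{0\})\le\mu(\{0\})$.

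The remaining inequality $\liminf_i\mu_i(\{0\})\ge\mu(\{0\})$ is the crux, and it is where the hypothesis $\Gamma_i\in\mathcal G$ enters. By Schick's solution of the determinant conjecture for $\mathcal G$ one has $\det\nolimits_{\mathcal N(\Gamma_i)}(A_i)\ge 1$, equivalently $\int_{(0,K]}\log\lambda\,d\mu_i(\lambda)\ge 0$. Splitting this integral at $1$ and using $\mu_i([0,K])=n$ gives, for every $\epsilon\in(0,1)$,
\[ \log(1/\epsilon)\cdot\mu_i\big((0,\epsilon)\big)\le\int_{(0,\epsilon)}\log(1/\lambda)\,d\mu_i\le\int_{[1,K]}\log\lambda\,d\mu_i\le n\log K, \]
so $\mu_i((0,\epsilon))\le n\log K/\log(1/\epsilon)=:\delta(\epsilon)$ uniformly in $i$, with $\delta(\epsilon)\to 0$ as $\epsilon\to0$. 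Choosing a continuous $g_\epsilon\colon[0,K]\to[0,1]$ with $g_\epsilon\equiv 1$ on $[0,\epsilon/2]$ and $\supp g_\epsilon\subset[0,\epsilon)$, weak convergence gives $\int g_\epsilon\,d\mu_i\to\int g_\epsilon\,d\mu\ge\mu(\{0\})$, while $\int g_\epsilon\,d\mu_i\le\mu_i(\{0\})+\mu_i((0,\epsilon))\le\mu_i(\{0\})+\delta(\epsilon)$; hence $\liminf_i\mu_i(\{0\})\ge\mu(\{0\})-\delta(\epsilon)$, and letting $\epsilon\to 0$ finishes the argument. I expect the determinant bound for $\mathcal G$ to be the genuine obstacle: unlike the soft measure theory above it has real content and is proved by an induction along the closure operations defining $\mathcal G$ (quotients with amenable kernel, directed and inverse limits, subgroups), the amenable case resting on a F\o lner exhaustion; when all $\Gamma_i$ are finite one can instead argue directly, since the product of the nonzero eigenvalues of the integer matrix $A_i^\ast A_i$ is a positive integer.
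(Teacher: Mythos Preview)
The paper does not prove this theorem: it is simply quoted from Schick's article \cite{Sch01} and used as a black box in the proof of Theorem~\ref{thm:lastthm}. So there is no ``paper's own proof'' to compare against.

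That said, your sketch is an accurate outline of the L\"uck--Schick argument as it appears in \cite{Lu94,Sch01}: reduce to the convergence $\dimNG{\Gamma_i}\ker A_i\to\dimNG{G}\ker A$ for a single integral matrix, translate into weak convergence of spectral measures via moment matching (which uses only $\bigcap_i G_i=\{e\}$), and then use the Fuglede--Kadison determinant bound $\det_{\mathcal N(\Gamma_i)}(A_i)\ge 1$ for groups in $\mathcal G$ to get the uniform estimate $\mu_i((0,\epsilon))\le n\log K/\log(1/\epsilon)$ that prevents mass from sliding into $0$. Your identification of the determinant bound as the substantive ingredient, proved by induction along the closure operations defining $\mathcal G$, is exactly right. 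One small quibble: in the displayed identity for $\BB_p$ you should double-check the sign/index bookkeeping (one usually writes $\BB_p=\dimNG{G}\ker\partial_p-\dimNG{G}\overline{\image\,\partial_{p+1}}=\dimNG{G}\ker\partial_p-(c_{p+1}-\dimNG{G}\ker\partial_{p+1})$), but this does not affect the argument since the reduction to a single matrix is all that is needed.
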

Note that the action of each $\Gamma_i$ on $X_i$ is co-compact, free and cellular.

\section{Basics of $\ell^2$-betti numbers of sutured manifolds}\label{sec:basics}
In this section we prove the equivalence of statements (2) and (3) in Theorem~\ref{thm:main}. Moreover, we prove a vanishing criteria for the $\ell^2$-homology of a cyclic covering of a sutured manifold.

As mentioned in the beginning we state every result only for the pair $(M,R_-)$, but by \Poincare Lefschetz duality (see theorem below) all results hold equally for the pair $(M,R_+)$.
\begin{theorem}\cite[Theorem 1.35(3)]{Lu02}\label{thm:poindual}
	Let X be a compact and connected $n$-manifold together with a decomposition $\partial X=Y_1\cup Y_2$, where $Y_1$ and $Y_2$ are compact $(n-1)$-dimensional submanifolds of $\partial X$ with $\partial Y_1 = \partial Y_2$. Then 
	\[ b_{n-i}^{(2)}(X,Y_1) = b_{i}^{(2)}(X,Y_2). \]
\end{theorem}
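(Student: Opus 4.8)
The plan is to realise the statement as the $\ell^2$-incarnation of classical Poincar\'e--Lefschetz duality, carried out first over the group ring $\Z[\pi_1(X)]$ and then pushed forward to the group von Neumann algebra. First I would pass to the universal cover: write $\pi=\pi_1(X)$, let $p\colon\widetilde X\to X$ be the universal covering and $\widetilde Y_i=p^{-1}(Y_i)$. Since $X$ is compact, $C^{CW}_\ast(\widetilde X,\widetilde Y_i)$ is a finite complex of finitely generated free $\Z[\pi]$-modules, and by definition $b^{(2)}_k(X,Y_i)=\dimNG{\pi}H^{(2)}_k\big(\ell^2(\pi)\otimes_{\Z[\pi]}C^{CW}_\ast(\widetilde X,\widetilde Y_i)\big)$, so everything takes place inside these two Hilbert $\mathcal{N}(\pi)$-chain complexes.

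Next comes the combinatorial input. Assume first that $X$ is orientable, so that there is an equivariant fundamental class $[\widetilde X,\widetilde{\partial X}]$ in locally finite $\Z[\pi]$-homology. Choosing a handle decomposition of $X$ built on a collar of $Y_1$ (equivalently a triangulation in which $\partial X$, $Y_1$, $Y_2$ and $\partial Y_1=\partial Y_2$ are subcomplexes, together with its dual cell structure) and turning it upside down onto $Y_2$, cap product with the fundamental class yields a chain homotopy equivalence of $\Z[\pi]$-chain complexes
\[ C^{CW}_\ast(\widetilde X,\widetilde Y_1)\;\simeq\;\overline{\Hom_{\Z[\pi]}\big(C^{CW}_{n-\ast}(\widetilde X,\widetilde Y_2),\Z[\pi]\big)}, \]
where the overline indicates that the canonical right $\Z[\pi]$-module structure on $\Hom_{\Z[\pi]}(-,\Z[\pi])$ is converted to a left one by the involution $g\mapsto g^{-1}$, and a handle (or cell) of index $k$ on the left corresponds to one of index $n-k$ on the right. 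If $X$ is nonorientable I would instead run this argument on the orientation double cover $\widehat X\to X$, whose fundamental group has index $2$ in $\pi$, and descend via the relative form of Proposition~\ref{prop:multiplicativ}; equivalently, one carries the orientation local system through all the constructions.

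Then I would apply $\ell^2(\pi)\otimes_{\Z[\pi]}-$. This functor is exact on the modules occurring, and for a finitely generated free $\Z[\pi]$-module $F$ it identifies $\ell^2(\pi)\otimes_{\Z[\pi]}\overline{\Hom_{\Z[\pi]}(F,\Z[\pi])}$ isometrically with the dual Hilbert $\mathcal{N}(\pi)$-module of $\ell^2(\pi)\otimes_{\Z[\pi]}F$, taking transposed maps to adjoint operators. Hence the equivalence above becomes, after regrading by $k\mapsto n-k$, a chain homotopy equivalence of finite Hilbert $\mathcal{N}(\pi)$-chain complexes between $C^{(2)}_\ast(\widetilde X,\widetilde Y_1)$ and the $\ell^2$-cochain complex of $(\widetilde X,\widetilde Y_2)$. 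Chain homotopy equivalences of Hilbert $\mathcal{N}(\pi)$-complexes induce $\mathcal{N}(\pi)$-isomorphisms on reduced homology and thus preserve von Neumann dimension, so $b^{(2)}_{n-i}(X,Y_1)=\dimNG{\pi}H_{(2)}^{i}(\widetilde X,\widetilde Y_2)$. The proof then finishes with the standard Hodge-type fact that for a finite Hilbert $\mathcal{N}(\pi)$-chain complex with boundary maps $\partial$, both the reduced homology in degree $i$ and the reduced cohomology in degree $i$ of the adjoint cochain complex are isometrically isomorphic to the harmonic submodule $\ker\partial_i\cap\ker\partial^{\ast}_{i+1}$; applied to $C^{(2)}_\ast(\widetilde X,\widetilde Y_2)$ this gives $\dimNG{\pi}H_{(2)}^{i}(\widetilde X,\widetilde Y_2)=b^{(2)}_i(X,Y_2)$, and combining the two displayed identities yields $b^{(2)}_{n-i}(X,Y_1)=b^{(2)}_i(X,Y_2)$.

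I expect the main obstacle to be the purely topological Step above: arranging a handle (or dual cell) decomposition in which $Y_1$, $Y_2$ and their common frontier $\partial Y_1=\partial Y_2$ are compatibly built in, smoothing the corner along $\partial Y_1$, and tracking orientations and local coefficients so that cap product with the fundamental class is genuinely a $\Z[\pi]$-chain equivalence -- in other words, doing relative Poincar\'e--Lefschetz duality equivariantly on the universal cover. Everything downstream is formal: exactness of $\ell^2(\pi)\otimes_{\Z[\pi]}-$, compatibility of $\Hom_{\Z[\pi]}(-,\Z[\pi])$ with Hilbert-module adjoints, homotopy invariance of $b^{(2)}$, and the harmonic description of reduced $\ell^2$-(co)homology. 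An alternative route bypassing handle decompositions is analytic: equip $X$ with a metric that is a product near $\partial X$, lift to $\widetilde X$, and use the Hodge star to match $L^2$-harmonic forms with relative boundary conditions along $Y_1$ and absolute ones along $Y_2$ with $L^2$-harmonic $(n-\ast)$-forms satisfying the swapped conditions, then invoke the $L^2$ de Rham theorem to return to the combinatorial $\ell^2$-Betti numbers.
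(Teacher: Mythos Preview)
Your argument is correct and is essentially the standard proof one finds in L\"uck's monograph: equivariant Poincar\'e--Lefschetz duality over $\Z[\pi]$ on the universal cover, followed by tensoring with $\ell^2(\pi)$ and the identification of reduced $\ell^2$-homology and $\ell^2$-cohomology via harmonic chains. Your care about handle decompositions compatible with the triad $(X;Y_1,Y_2)$ and about the nonorientable case is appropriate, though for the applications in this paper only the orientable case is needed.

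Note, however, that the paper does not give its own proof of this theorem at all: it is quoted directly from \cite[Theorem~1.35(3)]{Lu02}, and the only additional content in the paper is the remark that L\"uck states the result for $Y_1=\partial X$ and $Y_2=\emptyset$, but that the same proof goes through for a general decomposition $\partial X=Y_1\cup Y_2$ with $\partial Y_1=\partial Y_2$ (with a pointer to \cite[Chapter~5.9, Exercise~3]{Br93} for the underlying topological duality). So there is nothing to compare your proof against beyond the reference itself; what you have written is exactly the argument that reference contains, specialised to the Lefschetz (triad) setting.
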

\begin{rem}
L\"uck only discusses the case $Y_1=\partial X$ but the same proof he gives works also in the above setting (compare \cite[Chapter 5.9 Exercise 3]{Br93}).
\end{rem}
We also use this duality to obtain a general result about $\ell^2$-Betti numbers of balanced sutured manifolds.
\begin{prop}\label{prop:balanced}
	Let $M$ be a balanced sutured manifold with infinite fundamental group, then $b_1^{(2)}(M,R_{-})=b_2^{(2)}(M,R_{-})$, and $b_1^{(2)}(M,R_{-})=0$ implies for all $i\in \N$ $b_i^{(2)}(M,R_{-})=0$.
\end{prop}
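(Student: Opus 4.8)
The plan is to compute the relevant $\ell^2$-Betti numbers cell by cell together with the long exact sequence of the pair and the $\ell^2$-Euler characteristic formula~\eqref{eq:eulerchar}. First I would record that $\HH_0(M,R_-)=0$: since $M$ is connected and $R_-\neq\emptyset$ (a balanced sutured manifold with infinite $\pi_1(M)$ cannot have $R_-=\emptyset$, as otherwise the long exact sequence forces the statement trivially — one should check this degenerate case separately, where $b^{(2)}_1(M,\emptyset)=b^{(2)}_1(M)$ and the claim reduces to $b^{(2)}_0(M)=0$, which holds because $\pi_1(M)$ is infinite), the map $\HH_0(R_-\subset M)\to\HH_0(M)$ is onto and in fact $\HH_0(M)=0$ since $\pi_1(M)$ is infinite (\cite[Theorem 1.35(8)]{Lu02}); hence $\HH_0(M,R_-)=0$ from the long exact sequence of the pair. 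Next, because $M$ is a compact $3$-manifold with nonempty boundary it is homotopy equivalent to a $2$-complex, so $\HH_i(M,R_-)=0$ for $i\geq 3$ as well.

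The second step is to use \Poincare--Lefschetz duality. The boundary decomposes as $\partial M = R_+\cup\gamma\cup(-R_-)$, and I want to apply Theorem~\ref{thm:poindual} with $Y_1 = R_-$ and $Y_2 = R_+\cup\gamma$ (these are codimension-zero submanifolds of $\partial M$ with common boundary $\partial R_- = \partial R_+$ sitting inside $\gamma$). This gives $b^{(2)}_{3-i}(M,R_-) = b^{(2)}_i(M,R_+\cup\gamma)$. In particular $b^{(2)}_2(M,R_-) = b^{(2)}_1(M,R_+\cup\gamma)$ and $b^{(2)}_1(M,R_-)=b^{(2)}_2(M,R_+\cup\gamma)$, while $b^{(2)}_0(M,R_+\cup\gamma)=b^{(2)}_3(M,R_-)=0$ and $b^{(2)}_3(M,R_+\cup\gamma)=b^{(2)}_0(M,R_-)=0$. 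To turn this into a comparison of $b^{(2)}_1(M,R_-)$ and $b^{(2)}_2(M,R_-)$ I would also invoke the duality in the form $b^{(2)}_{3-i}(M,R_+)=b^{(2)}_i(M,R_-\cup\gamma)$, or more directly compare $(M,R_-)$ with $(M,R_+)$: since each component of $\gamma$ is an annulus or torus, $\chi(\gamma)=0$, and $\chi(R_+\cup\gamma)=\chi(R_+)$, $\chi(R_-\cup\gamma)=\chi(R_-)$. The balanced hypothesis $\chi_-(R_+)=\chi_-(R_-)$ combined with the incompressibility assumptions (so no sphere or disk components, hence $\chi(R_\pm)=-\chi_-(R_\pm)$) gives $\chi(R_+)=\chi(R_-)$.

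The third step assembles these: from~\eqref{eq:eulerchar} applied to the pair, $\chi(M,R_-) = -b^{(2)}_1(M,R_-)+b^{(2)}_2(M,R_-)$ (using the vanishing in degrees $0$ and $\geq 3$). But $\chi(M,R_-)=\chi(M)-\chi(R_-)$ and similarly $\chi(M,R_+)=\chi(M)-\chi(R_+)$, and $\chi(M)=\tfrac12\chi(\partial M)=\tfrac12(\chi(R_+)+\chi(R_-))$ since $\chi(\gamma)=0$. Hence $\chi(M,R_-)=\tfrac12(\chi(R_+)-\chi(R_-))=0$ by the balanced condition. Therefore $b^{(2)}_1(M,R_-)=b^{(2)}_2(M,R_-)$, which is the first assertion. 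For the second assertion, suppose $b^{(2)}_1(M,R_-)=0$; then $b^{(2)}_2(M,R_-)=0$ too by what we just proved, and $b^{(2)}_i(M,R_-)=0$ for $i=0$ and $i\geq 3$ by Step 1, so all $\ell^2$-Betti numbers of the pair vanish.

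The main obstacle I anticipate is bookkeeping the boundary decomposition carefully enough to apply Theorem~\ref{thm:poindual} legitimately — in particular verifying that $R_-$ and $R_+\cup\gamma$ (or $R_-\cup\gamma$) are genuine compact codimension-zero submanifolds of $\partial M$ sharing the same boundary curves, and handling the torus components of $\gamma$ correctly in the Euler characteristic count. The degenerate cases $R_-=\emptyset$ or $R_+=\emptyset$ (which can happen for balanced sutured manifolds, e.g.\ $\gamma=\partial M$ a union of tori) should be dispatched at the outset using $b^{(2)}_0(M)=0$ for infinite $\pi_1(M)$. Everything else is a formal consequence of additivity of von Neumann dimension and the long exact sequence.
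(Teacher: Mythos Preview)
Your overall strategy is exactly the paper's: kill $b_0^{(2)}$ and $b_3^{(2)}$, then use $\chi(M,R_-)=0$ and Equation~(\ref{eq:eulerchar}). Two points deserve correction.

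First, the inference ``$M$ is a compact $3$-manifold with nonempty boundary, hence homotopy equivalent to a $2$-complex, so $\HH_i(M,R_-)=0$ for $i\ge 3$'' is not valid as written. A homotopy equivalence of $M$ with a $2$-complex says nothing about the \emph{pair} $(M,R_-)$; for instance $H_3(D^3,S^2;\Z)=\Z$ even though $D^3$ is contractible, and when $R_-=\partial M$ one has $H_3(M,\partial M;\Z)=\Z$, so no relative $2$-complex model exists. The paper gets $b_3^{(2)}(M,R_-)=0$ the other way round: it first observes $b_0^{(2)}(M,R_+\cup\gamma)=0$ (same argument as for $b_0^{(2)}(M,R_-)$, using that $\pi_1(M)$ is infinite), and \emph{then} applies Theorem~\ref{thm:poindual} with $Y_1=R_-$, $Y_2=R_+\cup\gamma$ to deduce $b_3^{(2)}(M,R_-)=b_0^{(2)}(M,R_+\cup\gamma)=0$. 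You already set up exactly this duality in your Step~2; you just need to run the implication in the opposite direction rather than feeding in the (unjustified) conclusion of Step~1. Once that is fixed, the rest of Step~2 is superfluous: the only use of duality in the argument is this single vanishing.

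Second, you invoke ``incompressibility assumptions'' to pass from $\chi_-(R_\pm)$ to $\chi(R_\pm)$, but no such hypothesis appears in the proposition. The paper simply uses the balanced condition in the form $\chi(R_+)=\chi(R_-)$ (as stated in the introduction), which makes the computation $\chi(R_-)=\tfrac12\chi(\partial M)=\chi(M)$ immediate.
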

\begin{proof}
 One has $b_0^{(2)}(M,R_\pm) = 0$, because $\pi_1(M)$ is infinite~\cite[Theorem 1.35(8)]{Lu02}  and by \Poincare Lefschetz duality (Theorem \ref{thm:poindual}) $b_3^{(2)}(M,R_\mp) = 0$, too. Since $M$ is balanced, we have
 \[\chi(R_-)=\frac{\chi(R_+\cup\gamma\cup R_-)}{2}=\frac{\chi(\partial M)}{2}=\chi(M) \]
 and hence $\chi(M,R_-)=0$. By the relation between $\ell^2$-Euler characteristic and Euler characteristic (see Equation (\ref{eq:eulerchar})) we obtain \[\chi^{(2)} (\widetilde{M},\widetilde{R}_-;\mathcal{N}(\pi_1(M))=\chi(M,R_-)= 0\] and thus $b_1^{(2)}(M,R_-) =b_2^{(2)}(M,R_-)$.
\end{proof}
This gives us already the equivalence of (2) and (3) in the main result (Theorem \ref{thm:main}):
\begin{cor}\label{cor:equiv2n3}
Let $(M,\gamma)$ be an irreducible balanced sutured manifold. Assume that $\gamma$ is incompressible and $\pi_1(M)$ is infinite. Then $\BB_*(M,R_-)=0$ if and only if $\HH_1(R_-\subset M)\to\HH_1(M)$ is a monomorphism.
\end{cor}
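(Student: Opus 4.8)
The plan is to run the (weakly exact) long exact sequence of the pair $(M,R_-)$ with $\mathcal{N}(\pi_1(M))$-coefficients,
\[
\cdots\to\HH_{i}(R_-\subset M)\xrightarrow{\iota_i}\HH_i(M)\to\HH_i(M,R_-)\xrightarrow{\partial_i}\HH_{i-1}(R_-\subset M)\to\cdots,
\]
and reduce the statement to a single von Neumann dimension count in degree two. By Proposition~\ref{prop:balanced} we have $\BB_0(M,R_-)=\BB_3(M,R_-)=0$, $\BB_1(M,R_-)=\BB_2(M,R_-)$, and the vanishing of $\BB_1(M,R_-)$ already forces $\HH_*(M,R_-)=0$; hence $\HH_*(M,R_-)=0$ is equivalent to $\BB_2(M,R_-)=0$, and it remains to show that the latter holds if and only if $\iota_1$ is a monomorphism.

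For this I would look at the piece
\[
\HH_2(M)\xrightarrow{\alpha}\HH_2(M,R_-)\xrightarrow{\beta}\HH_1(R_-\subset M)\xrightarrow{\iota_1}\HH_1(M).
\]
Weak exactness gives $\overline{\image\alpha}=\ker\beta$ and $\overline{\image\beta}=\ker\iota_1$, so additivity of the von Neumann dimension under weakly exact sequences yields
\begin{align*}
\BB_2(M,R_-)&=\dimNG{\pi_1(M)}\ker\beta+\dimNG{\pi_1(M)}\overline{\image\beta}\\
&=\dimNG{\pi_1(M)}\overline{\image\alpha}+\dimNG{\pi_1(M)}\ker\iota_1 .
\end{align*}
The term $\dimNG{\pi_1(M)}\overline{\image\alpha}$ is bounded by $\BB_2(M)$, and here enters the only non-formal ingredient: since $M$ is a compact connected orientable \emph{irreducible} $3$-manifold with infinite fundamental group, $\BB_2(M)=0$ by the computation of the $\ell^2$-Betti numbers of $3$-manifolds due to Lott and L\"uck (see \cite[Chapter~4]{Lu02}), which is unconditional by the geometrization theorem. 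Hence $\BB_2(M,R_-)=\dimNG{\pi_1(M)}\ker\iota_1$, and, the von Neumann dimension being faithful on finitely generated Hilbert $\mathcal{N}(\pi_1(M))$-modules, this vanishes exactly when $\ker\iota_1=0$, i.e.\ when $\iota_1$ is a monomorphism. Together with the first paragraph this is the claim.

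I expect the appeal to $\BB_2(M)=0$ to be the step requiring the most care, as it must be used with no hypothesis on $\partial M$ beyond irreducibility of $M$ (in particular $R_\pm$ may be compressible here), and it is the only point where one uses genuine $3$-manifold topology rather than homological bookkeeping. This input can be avoided if one reads the hypothesis as ``$\iota_1$ is a weak isomorphism'' (condition~(3) of Theorem~\ref{thm:main}): incompressibility of $\gamma$ then forces $\BB_0(R_-\subset M)=0$ --- every component of $R_-$ either meets $\gamma$ in a core curve of an incompressible annulus or is a closed surface, so in either case has infinite image in $\pi_1(M)$ --- and the bottom of the long exact sequence identifies $\BB_1(M,R_-)$ with $\BB_0(R_-\subset M)$.
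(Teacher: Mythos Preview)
Your argument is correct and follows essentially the same route as the paper's own proof. Both invoke the long exact sequence of the pair $(M,R_-)$, use the vanishing $\HH_2(M)=0$ for an irreducible compact orientable $3$-manifold with infinite fundamental group (the paper cites \cite[Theorem~4.1]{Lu02}, which is exactly the Lott--L\"uck input you name), and then appeal to Proposition~\ref{prop:balanced} together with faithfulness of the von Neumann dimension to pass between $\BB_2(M,R_-)=0$ and injectivity of $\iota_1$. Your write-up is in fact more explicit than the paper's (which has a minor typo, writing $\HH_1(M,R_-)$ where $\HH_2(M,R_-)$ is meant), and your closing remark on how incompressibility of $\gamma$ yields $\BB_0(R_-\subset M)=0$ is a useful observation not spelled out in the paper.
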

\begin{proof}
	We look at the long exact sequence in $\ell^2$-homology of the pair $(M,R_-)$. Note that $\HH_2(M)=0$, because $M$ is irreducible and $\pi_1(M)$ is infinite~\cite[Theorem 4.1]{Lu02}. Therefore the sequence becomes
	\[\begin{tikzcd}[column sep=small]
		0\arrow[r]&\HH_1(M,R_-)\arrow[r]& \HH_1(R_-\subset M)\arrow[r]& \HH_1(M)\arrow[r]&\ldots \ . 
	\end{tikzcd}\]
Now the corollary follows from Proposition~\ref{prop:balanced} and the fact that the von Neumann dimension is zero if and only if the module is zero~\cite[Theorem 1.12(1)]{Lu02}.
\end{proof}
We end this section with two technical lemmas, which are the cornerstone for the proof of Theorem~\ref{thm:lastthm}. The first of these two lemmas is a vanishing result of certain $\ell^2$-Betti numbers and the second lemma gives a sufficient criteria two apply the first one.
\begin{lemma}\label{lem:vanishing}
	Let $(M,\gamma)$ be a connected sutured manifold and let $\phi\in H^1(M;\Z)$ be non-trivial. If there is a decomposition surface $S$ such that the class $[S]\in H_2(M,\partial M;\Z)$ is \Poincare dual to $\phi$ and $M \sutdec{S} M'$ results in a product sutured manifold $M'$, then
\[ H^{\phi,(2)}_*(M,R_-) = 0. \]
\end{lemma}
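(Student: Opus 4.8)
The plan is to reduce the statement to a computation in a mapping torus. Since $M\sutdec{S}M'$ with $M'$ a product sutured manifold, we have $M'\cong F\times[-1,1]$ for some surface $F$ (where $F\cong R'_+\cong R'_-$), and $M$ is recovered from $M'$ by gluing $F\times\{1\}$ to $F\times\{-1\}$ along a homeomorphism $h$. In other words, $M$ is (homotopy equivalent to) the mapping torus $T_h$ of a self-homeomorphism $h\colon F\to F$, and under this identification the class $\phi$ is (up to sign) the canonical epimorphism $\pi_1(M)\to\Z$ coming from the fibration $M\to S^1$. The infinite cyclic cover $\widehat M$ corresponding to $\ker\phi$ is then homotopy equivalent to $F$, with the covering $\Z$-action induced by $h_*$ on $\pi_1(F)$.

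First I would make this identification precise: the surface $S$ cuts $M$ into $F\times[-1,1]$, and reassembling tells us $M = (F\times[-1,1])/((x,1)\sim(h(x),-1))$ as a sutured manifold, with $R_-$ corresponding to a union of pieces of $\partial F\times[-1,1]$ together with (a copy of) the part of $S$ that became $R'_-$; but for the purposes of computing $H_*^{\phi,(2)}(M,R_-)$ what matters is the homotopy type of the pair and the $\Z$-cover. Next I would pass to the infinite cyclic cover: $\widehat M\simeq F$ and $\widehat{R_-}$ (the preimage of $R_-$) deformation retracts onto a subsurface/subcomplex of $F$ — concretely, onto $\partial_- F \coloneqq$ the union of those boundary components of $F$ that lie in the $R_-$ part of the suture, since $R_-$ in the product manifold is $\partial F\times[-1,1]$ intersected with the relevant side. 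The key point is that $\widehat{R_-}\hookrightarrow\widehat M$ is, up to homotopy, the inclusion of a proper subsurface (with boundary) of the surface $F$, and such an inclusion induces an injection on $\pi_1$ unless $F$ itself is closed, in which case $\partial_- F=\emptyset$.

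Then I would compute directly. Using Equation~(\ref{eq:dimfielddimneu}), it suffices to show $H_i(M,R_-;\C(t)^\phi)=0$ for all $i$, equivalently $H_i^{CW}$ of the pair $(\widehat M,\widehat{R_-})$ vanishes after $\otimes_{\Z[t^\pm]}\C(t)$. Since $\widehat M\simeq F$ is a compact surface and $\widehat{R_-}$ retracts onto $\partial_- F$ (a disjoint union of circles, or empty), the relevant chain complex of $\C(t)$-modules is that of a $2$-complex, and one computes $H_0$, $H_1$, $H_2$ of the pair. The $H_0$ term vanishes because $\phi$ is nontrivial, so $t-1$ acts invertibly on $\C(t)$ and $H_0^{\phi,(2)}$ of any connected piece is zero; more carefully one checks $H_0(\widehat M,\widehat{R_-};\C(t)^\phi)=0$ and $H_2(\widehat M,\widehat{R_-};\C(t)^\phi)=0$ (the latter by \Poincare--Lefschetz duality, Theorem~\ref{thm:poindual}, which swaps it with $H_0(M,R_+;\C(t)^\phi)$, again zero), and then $H_1$ vanishes by an Euler characteristic count: $\chi(M,R_-)=0$ since $M$ is balanced (this is exactly the computation in the proof of Proposition~\ref{prop:balanced}), so $\dim_{\C(t)}H_1 = \dim_{\C(t)}H_0+\dim_{\C(t)}H_2 - \chi(M,R_-)\cdot(\text{correction}) = 0$.

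The main obstacle I expect is bookkeeping rather than conceptual: identifying precisely what $R_-$ becomes under the cut-and-reglue, and in particular verifying that the long exact sequence of the pair $(\widehat M,\widehat{R_-})$ with $\C(t)$-coefficients has $H_0=0$. This requires knowing that $R_-$ is nonempty and meets $\widehat M$ in a subcomplex onto which $\widehat M$ does \emph{not} deformation retract — i.e. that $H_0(\widehat{R_-};\C(t)^\phi)\to H_0(\widehat M;\C(t)^\phi)$ is surjective — which is where one uses that in the product sutured manifold $M'$ the surface $R'_-$ meets every component, together with the fact that $t-1$ is invertible over $\C(t)$ so $H_0$ of any connected space with the $\phi$-local system is $0$ and hence $H_0$ of the pair is $0$ automatically once $\widehat M$ is connected. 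Once $H_0$ and (dually) $H_2$ of the pair vanish, the balancedness forces $H_1$ to vanish too, and then (\ref{eq:dimfielddimneu}) gives $H_*^{\phi,(2)}(M,R_-)=0$ as claimed.
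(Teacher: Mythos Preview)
Your argument has a genuine gap at the very first step: the claim that $M$ is (homotopy equivalent to) a mapping torus of $F\cong R'_-$ is false in general. When you undo the decomposition $M\sutdec{S}M'$, you re-glue only the two copies $S_+\subset R'_+$ and $S_-\subset R'_-$ of $S$, \emph{not} all of $R'_+$ to $R'_-$. Since $R'_\pm = ((R_\pm\cap M')\cup S_\pm)\setminus\interior\gamma'$, the piece $R_-\cap M'\subset R'_-$ stays in $\partial M$ after re-gluing; so $M$ is obtained from $F\times[-1,1]$ by identifying a \emph{proper} subsurface $S_+\subset F\times\{1\}$ with a subsurface $S_-\subset F\times\{-1\}$, which is not a mapping torus unless $R_\pm=\emptyset$. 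Correspondingly $\widehat M$ is an infinite chain of copies of $F\times[-1,1]$ glued along copies of $S$ and is not homotopy equivalent to $F$, and your description of $\widehat{R_-}$ as boundary circles of $F$ is off as well --- in fact $R_-\cap M'$ sits inside $F\times\{-1\}$ as a subsurface, not in $\partial F\times[-1,1]$. Your picture is exactly the special case $R_\pm=\emptyset$ that the paper singles out as motivation in the paragraph preceding the proof. (There is also an index error later: Poincar\'e--Lefschetz duality pairs $b_2^{(2)}(M,R_-)$ with $b_1^{(2)}(M,R_+\cup\gamma)$, not with a $b_0$, so that step does not give vanishing of $H_2$ for free, and the Euler-characteristic argument then cannot conclude.)

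The paper does not try to identify the homotopy type of $\widehat M$. Instead it writes down the Mayer--Vietoris sequence for the decomposition of $\widehat M$ into the pieces $M'\times\{i\}$, which over $\Z[t^\pm]$ takes the form
\[
0 \to C_*(S,\partial_- S)\otimes\Z[t^\pm] \xrightarrow{\ i_- - t\cdot i_+\ } C_*(M',M'\cap R_-)\otimes\Z[t^\pm] \to C_*(\widehat M,\widehat{R_-}) \to 0,
\]
and the key point is that $i_-\colon H_*(S,\partial_- S)\to H_*(M',M'\cap R_-)$ is an \emph{isomorphism}: the product structure $M'=R'_-\times I$ gives $H_*(M',M'\cap R_-)\cong H_*(R'_-,R_-\cap M')$, and excision on $R'_-=(R_-\cap M')\cup S_-$ identifies this with $H_*(S,\partial_- S)$. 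Hence $\det(i_- - t\cdot i_+)\neq 0$ in $\Z[t^\pm]$, the map becomes invertible over $\C(t)$, and $H_*(M,R_-;\C(t)^\phi)=0$.
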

Before giving the proof it is worth discussing a simple case which motivates the proof. Namely, if $N$ is sutured manifold with empty or toroidal boundary where the sutured manifold structure is given by $\gamma=\partial N$. In this case one has $R_-=\emptyset$. Moreover, if $S$ is  a decomposition surface $S$ such that the decomposition $N\sutdec{S}N'$ results in a product sutured manifold, then $S$ is a fiber of a fibration of $N$ over $S^1$. The cover corresponding to $\ker (\phi)$ is homeomorphic to $S\times\R$ and hence $H_\ast(N;\C(t)^{\phi})=H_\ast(S\times\R;\Z)\otimes_{\Z[t^\pm]} \C(t)=0$.
\begin{proof}[Proof of Lemma~\ref{lem:vanishing}]
%For the proof we replace the annulus components of the sutures $\gamma$ (resp.\ $\gamma'$) by their cores, so that $R_-\cap R_+=\gamma$ (resp.\ $R'_-\cap R'_+=\gamma'$). This doesn't change the homotopy type of $(M,R_-)$.
%
We have two canonical embeddings of $S$ into the boundary of $M'$ which we denote by $\map{i_\pm}{S}{M'}$. Moreover, we denote by $S_\pm$ the images $i_\pm(S)$.

Since $\phi$ is the \Poincare dual of $S$, the cyclic cover $p\colon\widehat{M}\to M$ corresponding to $\ker\phi$ can be described by
	\[ \widehat{M}= (M'\times \Z)/\sim, \]
	where $(S_-,i)$ is glued to $(S_+,i+1)$ in the obvious way (see Figure~\ref{fig:cycliccover}).
The deck transformation group acts on the $\Z$-factor. We denote by $t$ the generator of this action i.e.\ $t\cdot(x,i)=(x,i+1)$ for all $x\in M'$. 
\begin{figure}
	\scalebox{.75}{\includegraphics{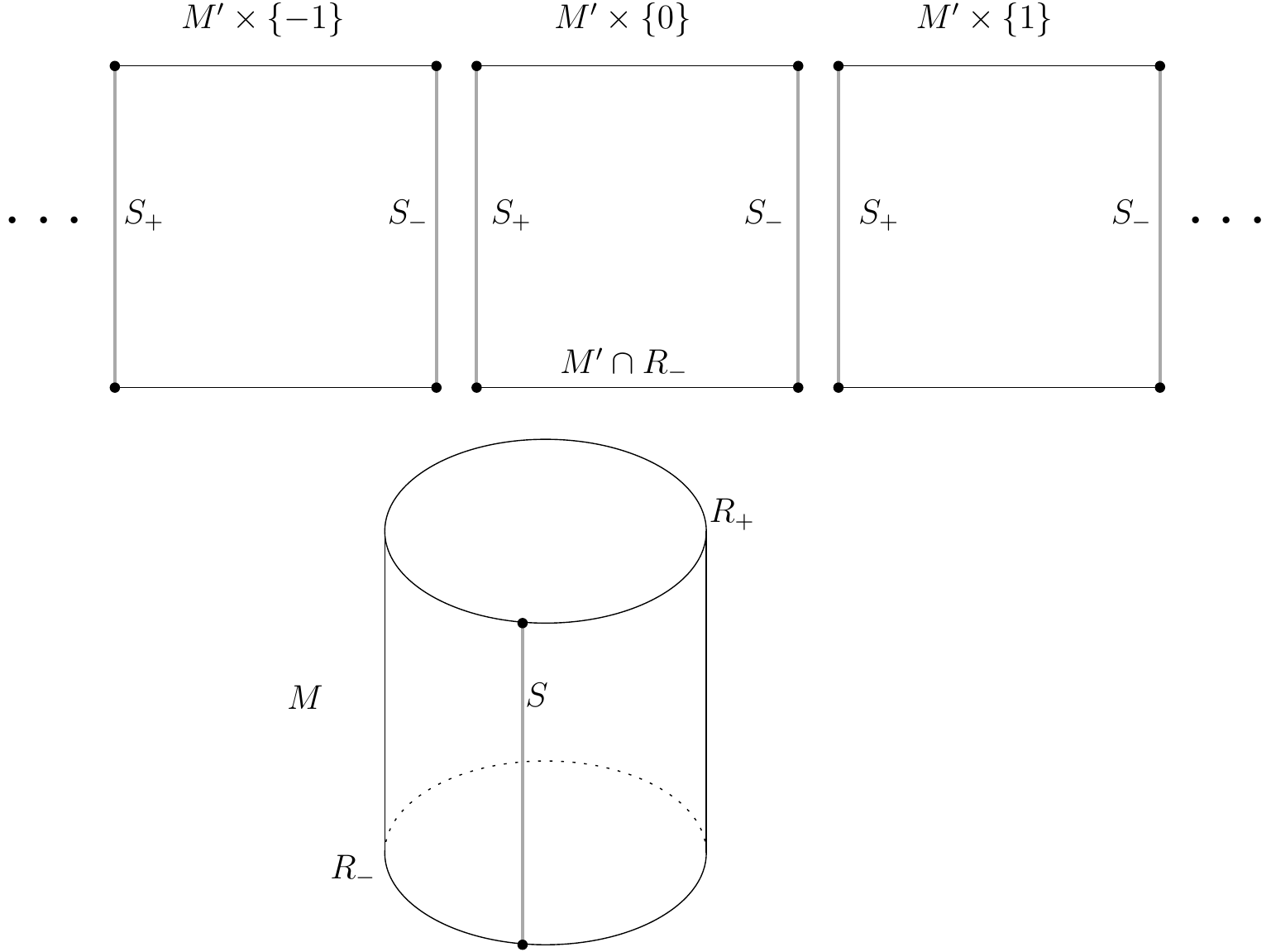}}
	\caption{This is a schematic picture one dimension reduced. It illustrates the cyclic cover $\widehat{M}$ of $M$ corresponding to the kernel of $\phi$, where $\phi$ is \Poincare dual to an embedded surface $S$.}\label{fig:cycliccover}
\end{figure}

We decompose $\widehat{M}$ into the subsets $\left\{M'\times\{i\}\right\}_{i\in\Z}$ and we set $\partial_- S=S\cap R_-$. Given a CW-complex $X$ we abbreviate $C^{CW}_*(X;\Z)$ and $H_*(X;\Z)$ to $C_*(X)$ and $H_*(X)$ so that we can express the Mayer-Vietoris sequence in cellular homology with respect to the above decomposition in the following way: 
\[\begin{tikzcd}[column sep=tiny]
0\arrow[r] & C_*(S,\partial_- S)\otimes_{\Z} \Z[t^\pm] \arrow[r,"i_--t\cdot i_+"] &[2.1em] C_*(M',M'\cap R_-)\otimes_{\Z} \Z[t^\pm] \arrow[r]& C_*(\widehat{M},\widehat{R}_-) \arrow[r] & 0.
\end{tikzcd}
\]
This yields a long exact sequence in homology
\[\begin{tikzcd}[column sep=tiny]
\ldots\hspace*{-0.3em}\arrow[r] &[0.01em]\hspace*{-0.3em} H_i(S,\partial_- S)\otimes_{\Z} \Z[t^\pm] \arrow[r,"i_--t\cdot i_+"] &[2.1em]H_i(M',M'\cap R_-)\otimes_{\Z} \Z[t^\pm]\hspace*{-0.2em} \arrow[r]&[0.01em] H_i(\widehat{M},\widehat{R}_-)\hspace*{-0.3em}\arrow[r]&[0.01em]\hspace*{-0.5em} \ldots
\end{tikzcd}.	
\]
Next we show that the inclusion $i_-\colon(S,\partial_- S)\to(M',M'\cap R_-)$ induces an isomorphism on homology.

 By assumption $M'$ is a product sutured manifold i.e.\ $M'=R'_-\times I$ with $R'_-=(M'\cap R_-)\cup S_-$. Therefore, we get by homotopy invariance \[H_*(M',M'\cap R_-)=H_*(R'_-\times I,M'\cap R_-)\isofrom H_*(R'_-,M'\cap R_-) \] and by excision 
 \[H_*(R'_-,M'\cap R_-)=H_*((M'\cap R_-)\cup S_-, M'\cap R_-)\xleftarrow{i_-} H_*(S,\partial_- S). \]
We refer to Figure~\ref{fig:isomorphism} for an illustration of this argument.
\begin{figure}
	\scalebox{0.75}{\includegraphics{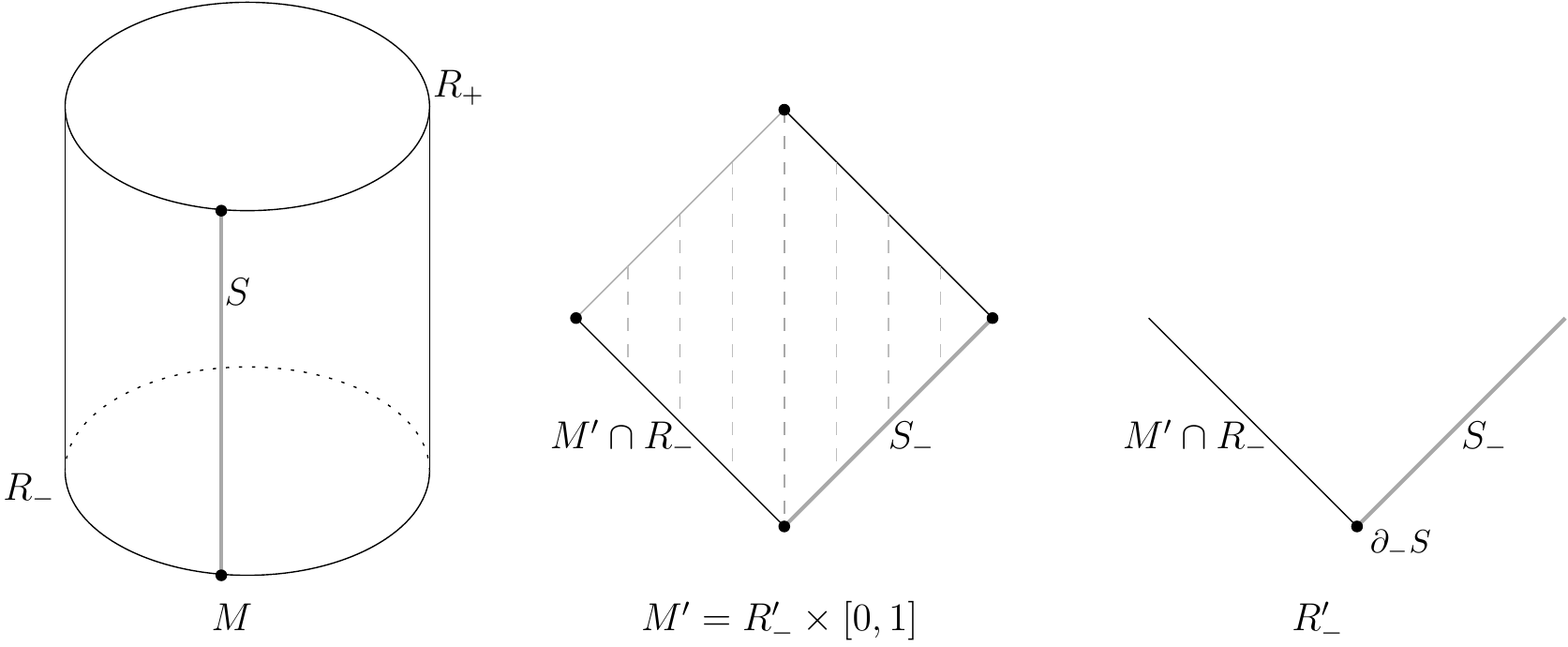}}
	\caption{A schematic picture one dimension reduced. The sutured decomposition $M\sutdec{S} M'$ results in a product sutured manifold. The dashed lines show the $[0,1]$-factor of the product. By homotopy invariance and excision one has an isomorphism $H_*(S,\partial_-S)\xrightarrow{i_-} H_*(M',M'\cap R_-)$. }\label{fig:isomorphism}
\end{figure}

We now continue with the rest of the proof. Because $H_*(M',M'\cap R_-)\cong H_*(S,\partial_- S)$ is free abelian it makes sense to talk about the determinant. And since $i_-$ induces an isomorphism on homology we have $\det_\Z (i_- )\neq 0$. This of course implies that $\det_{\Z[t^\pm]} (i_--t\cdot i_+)\neq 0$. Therefore the map $i_--t\cdot i_+$ is invertible over $\C(t)$. Also note that we have $H_*(\widehat{M},\widehat{R}_-;\Z)=H_*(M,R_-;\Z[t^\pm]^{\phi})$.

Now we use the above sequence, the fact that $\C (t)$ is flat over $\Z[t^\pm]$ and that $i_--t\cdot i_+$ is invertible over $\C(t)$ to obtain 
$H_*(M,R_-;\C(t)^{\phi})=0$. Then Equation~(\ref{eq:dimfielddimneu}) yields $H^{\phi,(2)}_*(M,R_-) = 0.$ 
%We have a filtration on $(\widehat{M}_0,\widehat{R}_0)\subset(\widehat{M}_1,\widehat{R}_1)\subset\ldots\subset(\widehat{M},\widehat{R}_-)$ by the subsets
%\begin{align*}
%\widehat{M}_n&:=\bigcup_{i=-n}^{n} q\left( (M',i) \right) ,\\
%\widehat{R}_n&:=\widehat{M}_n\cap \widehat{R}_-
%\end{align*}
%
%It is now sufficient to show, that the inclusion $(S,\partial_- S)\to (\widehat{M}_0,\widehat{R}_0)$ and for every $n\in\N$ the inclusion $(\widehat{M}_{n-1},\widehat{R}_{n-1})\to (\widehat{M}_n,\widehat{R}_n)$ induces an isomorphism on homology.
%
%
%That the inclusion $(\widehat{M}_{n-1},\widehat{R}_{n-1})\to (\widehat{M}_n,\widehat{R}_n)$ induces an isomorphism is basically the same argument since
%\[ q^{-1}(\widehat{M}_{n-1}) = (S_-,n)\cup (S_+,-n) \cup \bigcup_{i=-n+1}^{n-1} (M',i). \] 
%We have illustrated the argument in a picture (see Figure ).
\end{proof}

\begin{lemma}\label{lem:quasifiber}
Let $(N,\emptyset,\emptyset,\emptyset)$ be a taut sutured manifold and let $\Sigma$ be a quasi-fiber in $N$ which is not the fiber of a fibration. Then the sutured manifold $M$ obtained by $N\sutdec{\Sigma} M$ contains a decomposition surface $S$ such that $M\sutdec{S}M'$ results in a product sutured manifold $M'$ and the class $[S]\in H_2(M,\partial M;\Z)$ is non-trivial.
\end{lemma}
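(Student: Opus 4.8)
The plan is to take for $S$ a copy of the fiber of an ambient fibration, cut open along $\Sigma$, and to recognise the result of cutting $M$ along it as a product by running the fibration through the commutative diagram of Lemma~\ref{lem:sutdec} together with Waldhausen's classification (Lemma~\ref{lem:waldhausen}). Concretely, since $\Sigma$ is a quasi-fiber I would first fix a fibration $p\colon N\to S^1$ with fiber $F$ realising the defining relation $x_N([F])+x_N([\Sigma])=x_N([F]+[\Sigma])$; as $N$ is taut it is irreducible, so in particular $N\not\cong S^1\times S^2$. Isotope $\Sigma$ so that $\Sigma\cap F$ has the least number of components in its isotopy class and apply Lemma~\ref{lem:waldhausen}. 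Its first alternative would make $\Sigma$ a disjoint union of parallel copies of $F$, hence a fiber of $p$ composed with a covering $S^1\to S^1$, contradicting the assumption that $\Sigma$ is not a fiber; so the second alternative holds: writing $N\setminus\nu(F)\cong F\times[-1,1]$ with its product sutured structure, every component of $\Sigma':=\Sigma\cap(N\setminus\nu(F))$ meets both $F_+$ and $F_-$.

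Next I would apply Lemma~\ref{lem:sutdec} with the two decomposition surfaces there taken to be our $\Sigma$ and $F$: the required additivity of the Thurston norm is exactly the quasi-fiber relation, $\Sigma$ and $F$ are norm-minimising decomposition surfaces, and the intersection has been minimised. In its commutative diagram $M=N\setminus\nu(\Sigma)$ is the node $N'$, and the lemma provides a decomposition surface $S:=F\cap M$ for $M$ with $M\sutdec{S}M'$, where $M'=N''$; this $N''$ is reached equally by decomposing the product $F\times[-1,1]=N\setminus\nu(F)$ along $\Sigma'$, and by decomposing $N\setminus\nu(F\oplus\Sigma)$ along a disjoint union $C$ of annuli and disks. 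It then remains to show that $M'=N''$ is a \emph{product} sutured manifold (Lemma~\ref{lem:sutdec} only gives tautness) and that $[S]\neq 0$ in $H_2(M,\partial M;\Z)$.

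The product statement is the step I expect to be the main obstacle. Here I would argue as follows: the surface $F\oplus\Sigma$ represents $[F]+[\Sigma]$, which lies in the interior of the fibered cone of $[F]$ and so is a fibered class; being norm-minimising by the additivity, $F\oplus\Sigma$ is isotopic to a fiber, so $N\setminus\nu(F\oplus\Sigma)$ is a product. One must then check that decomposing this product along the annuli and disks $C$ of Lemma~\ref{lem:sutdec} — which are \emph{product} annuli and disks, each arising from resolving a single component of $F\cap\Sigma$ — again yields a product sutured manifold. Equivalently, via the other description, one shows directly that cutting the product $F\times[-1,1]$ along $\Sigma'$ gives a product; this is precisely where the second alternative of Lemma~\ref{lem:waldhausen} is used — every component of $\Sigma'$ meets $F_+$ and $F_-$, and $\Sigma'$ is incompressible — through an innermost/outermost argument that pushes the interval factor, in the style of the proof of that lemma. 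Verifying this is the heart of the argument.

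Finally, for the non-triviality of $[S]$: let $\phi\in H^1(N;\Z)$ be the primitive class \Poincare dual to $F$, which is non-zero. Since $M$ is $N$ with an open neighbourhood of the surface $\Sigma$ removed, the inclusion induces a surjection $\pi_1(M)\twoheadrightarrow\pi_1(N)$, hence an injection $H^1(N;\Z)\hookrightarrow H^1(M;\Z)$, so the restriction $\phi|_M$ is non-zero. Its \Poincare Lefschetz dual in $H_2(M,\partial M;\Z)$ is $[F\cap M]=[S]$, which is therefore non-zero, completing the proof.
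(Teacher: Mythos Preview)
Your outline matches the paper's: take $S=F\cap M$ for a fiber $F$ witnessing the quasi-fiber condition, use Lemma~\ref{lem:sutdec} to reach $M'$ via the product $F\times[-1,1]$, and then argue separately that $M'$ is a product and that $[S]\neq 0$. The differences are in how you justify these last two points.

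For the product step you are working harder than necessary and not quite closing the argument. The paper simply observes that $M'$ arises from the product $N\setminus\nu(F)\cong F\times[-1,1]$ by the taut decomposition along $\Sigma'$ and then invokes Gabai \cite[Remark~4.9(4)]{Ga83}: a taut sutured decomposition of a product sutured manifold is again a product. Your route via ``$[F]+[\Sigma]$ is fibered, so $N\setminus\nu(F\oplus\Sigma)$ is a product, and the annuli/disks $C$ are product annuli/disks'' needs justification at two places (that the specific surface $F\oplus\Sigma$ is isotopic to a fiber, and that the pieces $C$ are \emph{product} annuli/disks in that product structure), neither of which you supply; your alternative Waldhausen-style argument is likewise only sketched. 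Citing Gabai's remark dispatches the whole step in one line.

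For the non-triviality of $[S]$ there is a genuine error. Your claim that the inclusion induces a surjection $\pi_1(M)\twoheadrightarrow\pi_1(N)$ is false whenever $\Sigma$ has a non-separating component: a loop meeting $\Sigma$ algebraically once cannot be homotoped into $M$, and indeed for $\Sigma$ connected non-separating $\pi_1(N)$ is an HNN extension of $\pi_1(M)$ with the stable letter outside the image. Consequently your deduction that $H^1(N;\Z)\hookrightarrow H^1(M;\Z)$ is injective does not follow; in fact the kernel of the restriction map is exactly the span of the Poincar\'e duals of the components of $\Sigma$. The conclusion $\phi|_M\neq 0$ is still true --- one can argue that $[F]$, lying in the interior of the fibered cone, cannot be an integral combination of the $[\Sigma_i]$, which all lie on the proper face determined by $[\Sigma]$ --- but you have not given a correct argument. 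The paper instead uses Lemma~\ref{lem:waldhausen} here (rather than for the product step): since $\Sigma$ is not a fiber, each component of $\Sigma'$ meets both $F_\pm$, which lets one thread an explicit closed curve $c\subset N$ missing $\Sigma$ but with positive intersection number with $F$; this $c$ lies in $M$ and pairs non-trivially with $[S]$.
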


\begin{proof}
Since $\Sigma$ is a quasi-fiber there is a fibration of $N$ over $S^1$ with fiber $F$ and $\chi_N([F])+\chi_N([\Sigma])=\chi_N([F]+[\Sigma])$. We can assume that $F$ and $\Sigma$ are in general position such that the number of components of $\Sigma\cap F$ is minimal compared to all surfaces isotopic to $\Sigma$ and $F$.

We define $F':=F\cap M=F\setminus\nu(F\cap \Sigma)$ and make the following claim.
\begin{claim}
The sutured decomposition $(M,\Sigma_+,\Sigma_-,\gamma) \sutdec{F'} (M',R'_+,R'_-,\gamma')$ results in a product sutured manifold.
\end{claim}
We set $\Sigma'=\Sigma\cap (N\setminus\nu(F))$. By Lemma~\ref{lem:sutdec} we have that $M'$ is a taut sutured manifold and by the commutativity of the diagram in Lemma~\ref{lem:sutdec} we have that $N\sutdec{F}N\setminus\nu(F)\sutdec{\Sigma'} M'$. Moreover, $N\setminus\nu(F)\cong F\times[-1,1]$ is a product, since $F$ is a fiber of a fibration. The taut sutured manifold decomposition of a product sutured manifold is again a product sutured manifold (\cite[Remark 4.9(4)]{Ga83}) and hence $M'$ is a product.

It remains to show, that $[F']\in H_2(M,\partial M)$ is non-trivial. This follows directly from the next claim.
\begin{figure}
	\includegraphics{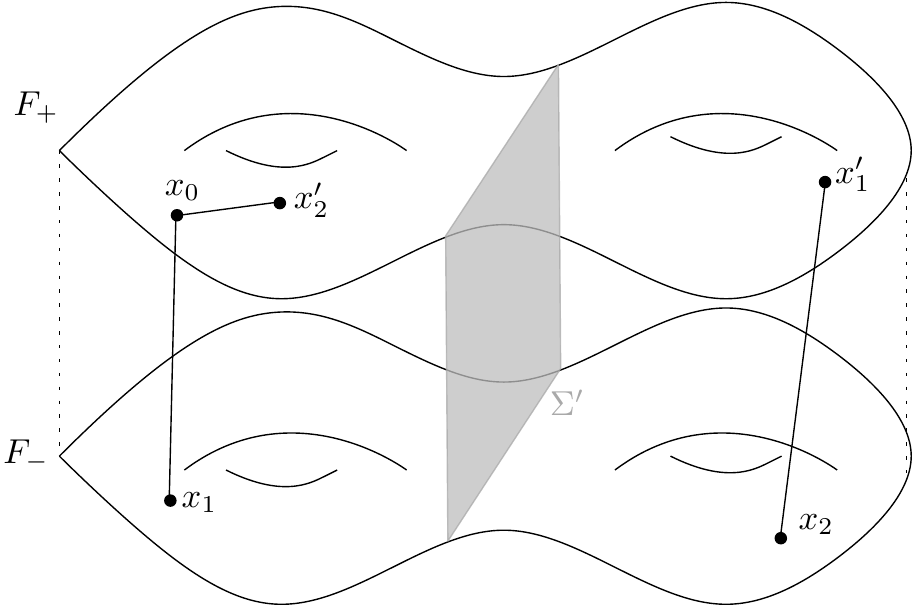}
	\caption{A schematic picture of how the loop $c$ in the proof of Proposition~\ref{prop:virtualvanishing} is constructed. We see the product surface $F\times[-1,1]$. The grey rectangle represents $\Sigma\cap F\times [-1,1]$. The points $x_i$ and $x'_i$ are identified by the monodromy in $N$. The concatenation of the paths gives a loop in $N$.    }\label{fig:closedloop}
\end{figure}

\begin{claim}
	 There is a closed curve $c$ in $N$, which doesn't intersect $\Sigma$ but has a positive intersection number with $F$.
\end{claim}
The curve $c$ is constructed as follows (see Figure~\ref{fig:closedloop}). We choose a point $x_0\in F_+\setminus\Sigma'$ and a path $p_0$ not intersecting $\Sigma'$ to $F_-$. Such a path always exists by Lemma~\ref{lem:waldhausen} and the assumption that $\Sigma$ is not a fiber of a fibration. The monodromy sends the endpoint of this path to a new point $x'_1$ on $F_+$ maybe in a different connected component of $M'$. We repeat this process to obtain another path $p_1$ connecting $x'_1$ with another point $x_2\in F_-$, which is sent to $x'_2\in F_+$ via the monodromy. Since there are only finitely many connected components of $M'$ we can after several iterations of this process join $x_n$ with $x_0$ in $F_+$ by a path $p$ not intersecting $\Sigma'$. All these paths patched together give a closed loop in $N$. This loop does not intersect $\Sigma$ but gives a positive intersection number with $F$.

Since $c$ does not intersect $\Sigma$ it is a loop in $M$ and since it has positive intersection number with $F$, the class $[F']\in H_2(M,\partial M)$ is non-trivial.
\end{proof}

\section{Proof of the Main Theorem}\label{sec:proof}
\subsection{$\ell^2$-acyclic implies taut}
The basic idea is that $\BB_2(M,R_-)$ is an upper bound for how far a sutured manifold $(M,\gamma)$ is away from being taut.
\begin{lemma}[Half lives, half dies]\label{lem:hlhd}
Let $W$ be a compact connected $(2k+1)$-dimensional manifold, then
\[\dimNG{\pi_1(W)} \ker \left(i_\ast\colon\HH_k(\partial W\subset W)\to\HH_k (W) \right)= \frac{1}{2}\cdot b_k^{(2)}(\partial W\subset W). \]
\end{lemma}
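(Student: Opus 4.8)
The plan is to mimic the classical "half lives, half dies" argument for ordinary homology, replacing ordinary dimension by the von Neumann dimension $\dimNG{\pi_1(W)}$, which is well-behaved enough (additive on weakly exact sequences, compatible with \Poincare--Lefschetz duality) to make every step go through. Write $\pi=\pi_1(W)$ and let $i_\ast\colon\HH_k(\partial W\subset W)\to\HH_k(W)$ be the map induced by inclusion. First I would set up the long exact $\ell^2$-homology sequence of the pair $(W,\partial W)$ together with \Poincare--Lefschetz duality for the $(2k+1)$-manifold $W$ with boundary: Theorem~\ref{thm:poindual} (applied with $Y_1=\partial W$, $Y_2=\emptyset$) gives $b_j^{(2)}(W)=b_{2k+1-j}^{(2)}(W,\partial W)$ for all $j$. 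The key numerical input is then that, in the portion of the long exact sequence
\[
\HH_{k+1}(W,\partial W)\to \HH_k(\partial W\subset W)\xrightarrow{i_\ast}\HH_k(W)\to\HH_k(W,\partial W),
\]
the two outer terms $\HH_{k+1}(W,\partial W)$ and $\HH_k(W,\partial W)$ have, by duality, von Neumann dimensions equal to $b_k^{(2)}(W)$ and $b_{k+1}^{(2)}(W)$ respectively.

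Next I would exploit additivity of $\dimNG{\pi}$ along the weakly exact sequence. From exactness at $\HH_k(\partial W\subset W)$,
\[
\dimNG{\pi}\ker(i_\ast)=\dimNG{\pi}\operatorname{im}\bigl(\HH_{k+1}(W,\partial W)\to\HH_k(\partial W\subset W)\bigr),
\]
and from exactness at $\HH_{k+1}(W,\partial W)$ this equals $b_{k+1}^{(2)}(W,\partial W)-\dimNG{\pi}\operatorname{im}\bigl(\HH_{k+1}(W)\to\HH_{k+1}(W,\partial W)\bigr)$; dually, $\dimNG{\pi}\operatorname{coker}(i_\ast)=b_k^{(2)}(W,\partial W)-\dimNG{\pi}\operatorname{im}\bigl(\HH_{k+1}(W,\partial W)\to\HH_k(\partial W\subset W)\bigr)$. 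The crucial symmetry is that $\ker(i_\ast)$ and $\operatorname{coker}(i_\ast)$ have the same von Neumann dimension: this follows because the map $i_\ast\colon\HH_k(\partial W\subset W)\to\HH_k(W)$ is, up to the duality isomorphisms $\HH_k(W)\cong\HH_{k+1}(W,\partial W)^\vee$ and $\HH_k(\partial W\subset W)\cong\HH_k(\partial W\subset W)^\vee$ (using that $\partial W$ is a closed $2k$-manifold, so it is \Poincare self-dual in the middle degree), the adjoint of the connecting-type map; concretely one checks that the $\ell^2$-intersection pairing on $\HH_k(\partial W\subset W)$ is skew-symmetric (or symmetric) and restricts to zero on $\ker(i_\ast)$, identifying $\operatorname{coker}(i_\ast)$ with the von Neumann dual of $\ker(i_\ast)$. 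Once $\dimNG{\pi}\ker(i_\ast)=\dimNG{\pi}\operatorname{coker}(i_\ast)$ is established, additivity gives
\[
b_k^{(2)}(\partial W\subset W)=\dimNG{\pi}\HH_k(\partial W\subset W)=\dimNG{\pi}\ker(i_\ast)+\dimNG{\pi}\operatorname{im}(i_\ast),
\]
while $\dimNG{\pi}\operatorname{im}(i_\ast)=b_k^{(2)}(W)-\dimNG{\pi}\operatorname{coker}(i_\ast)$; but I would instead just use that $\dimNG{\pi}\HH_k(\partial W\subset W)=\dimNG{\pi}\ker(i_\ast)+\dimNG{\pi}\operatorname{im}(i_\ast)$ and that the duality pairing identifies $\operatorname{im}(i_\ast)$ with the von Neumann dual of $\HH_k(\partial W\subset W)/\ker(i_\ast)\cong\operatorname{im}(i_\ast)$... more cleanly: $b_k^{(2)}(\partial W\subset W)=\dimNG{\pi}\ker(i_\ast)+\dimNG{\pi}\operatorname{coker}(i_\ast)=2\dimNG{\pi}\ker(i_\ast)$, which is the claim.

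The main obstacle I anticipate is making the self-duality argument precise in the von Neumann setting: one must be careful that the "dual" $\HH_k(\partial W\subset W)^\vee$ is taken in the category of Hilbert $\mathcal{N}(\pi)$-modules (where $\dimNG{\pi}V^\vee=\dimNG{\pi}V$), that \Poincare--Lefschetz duality is realized by a weak isomorphism of such modules intertwining $i_\ast$ with the adjoint of the boundary map, and that the $\ell^2$-intersection form on the middle-dimensional homology of the closed manifold $\partial W$ is nondegenerate in the appropriate weak sense — for this I would cite L\"uck's treatment of \Poincare duality for $\ell^2$-homology (\cite[Theorem 1.35]{Lu02}) and the behaviour of $\dimNG{\pi}$ under duals and weak exactness (\cite[Theorem 1.12, Theorem 1.21]{Lu02}). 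A cheaper route that avoids the intersection form altogether is purely dimension-theoretic: combine the long exact sequences of $(W,\partial W)$ with the duality identities $b_j^{(2)}(W)=b_{2k+1-j}^{(2)}(W,\partial W)$ and the fact that $\dimNG{\pi}$ of the image of any map equals $b$ of the source minus $b$ of its kernel; tracking $\dimNG{\pi}$ of the images $\operatorname{im}(\HH_{j}(W,\partial W)\to\HH_{j-1}(\partial W\subset W))$ around the sequence and pairing degree $k$ with degree $k+1$ via duality forces the two contributions to coincide, yielding the factor $\tfrac12$. I would present the dimension-counting version as the main proof and remark that it is the $\ell^2$-incarnation of the usual argument.
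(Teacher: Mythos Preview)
Your ``cheaper route'' is exactly what the paper does, and it is the cleanest argument. The paper splits the long exact sequence of $(W,\partial W)$ at the middle term into two exact sequences
\[
\cdots\to \HH_{k+1}(W,\partial W)\to \ker(i_\ast)\to 0
\quad\text{and}\quad
0\to \overline{\image(i_\ast)}\to \HH_k(W)\to\cdots,
\]
observes that by \Poincare--Lefschetz duality $b_{k+1+j}^{(2)}(W,\partial W)=b_{k-j}^{(2)}(W)$ and $b_{k+j}^{(2)}(\partial W\subset W)=b_{k-j}^{(2)}(\partial W\subset W)$, so the two sequences contain the same dimensions except for $\dimNG{\pi}\ker(i_\ast)$ versus $\dimNG{\pi}\overline{\image(i_\ast)}$; since both sequences are exact their Euler characteristics vanish, hence these two numbers agree and the factor $\tfrac12$ drops out. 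Your sketch of this route is correct in spirit but vague; the paper's splitting makes the symmetry completely transparent without chasing individual images around.

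Your first (intersection--form) approach, however, contains a genuine slip. You conclude with
\[
b_k^{(2)}(\partial W\subset W)=\dimNG{\pi}\ker(i_\ast)+\dimNG{\pi}\coker(i_\ast),
\]
but additivity gives $b_k^{(2)}(\partial W\subset W)=\dimNG{\pi}\ker(i_\ast)+\dimNG{\pi}\overline{\image(i_\ast)}$; the cokernel of $i_\ast$ is a quotient of $\HH_k(W)$, not of $\HH_k(\partial W\subset W)$, and its dimension has no reason to appear in that sum. Correspondingly, the statement $\dimNG{\pi}\ker(i_\ast)=\dimNG{\pi}\coker(i_\ast)$ is not what the Lagrangian argument yields: nondegeneracy of the middle-dimensional pairing on $\partial W$ together with isotropy of $\ker(i_\ast)$ identifies $\ker(i_\ast)$ with the dual of $\HH_k(\partial W\subset W)/\ker(i_\ast)\cong\overline{\image(i_\ast)}$, giving $\dimNG{\pi}\ker(i_\ast)=\dimNG{\pi}\overline{\image(i_\ast)}$. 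With $\overline{\image}$ in place of $\coker$ throughout, your argument becomes correct and equivalent to the paper's --- but as written the final equation is false and the proof does not close.
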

\begin{proof}
We write $G=\pi_1(W)$ and $\map{i_*}{\HH_k(\partial W)}{\HH_k(W)}$. Because of the additivity of the von Neumann dimension we get
	\[
	b_k^{(2)}(\partial W\subset W)= \dimNG{G} \ker (i_*) + \dimNG{G} \overline{\image (i_*)}.
	\]
	Therefore it is sufficient to prove that $\dimNG{G} \ker (i_*) = \dimNG{G} \overline{\image (i_*)}$. To show this one considers the long exact sequence in homology associated to the pair $(\partial W, W)$. This sequence can be decomposed into long exact sequences
	\[
	\begin{tikzcd}[column sep=small, row sep=small]
		\ldots\arrow[r]&\HH_{k+1}(W,\partial W)\arrow[r]\arrow[d]&\ker (i_*) \arrow[r]\arrow[d]& 0\arrow[d]\arrow[r]&\ldots \\
	\ldots\arrow[r]&\HH_{k+1}(W,\partial W)\arrow[r]\arrow[d]&\HH_k(\partial W\subset W)\arrow[r]\arrow[d]&\HH_k(W)\arrow[d]\arrow[r]&\ldots \\
	\ldots\arrow[r]&0\arrow[r]&\overline{\image (i_*)} \arrow[r]& \HH_k(W)\arrow[r]&\ldots\ .
	\end{tikzcd}
	\]
One has $b_{k+1+i}^{(2)}(W,\partial W)=b_{k-i}^{(2)}(W)$ and $b_{k+i}^{(2)} (\partial W\subset W) = b_{k-i}^{(2)} (\partial W\subset W)$ by \Poincare duality. So we see that the Euler characteristic of the upper and the lower exact sequence contains the same summands except for $\dimNG{G} \ker (i_*)$ and $\dimNG{G}\overline{\image (i_*)}$. But both sequences have zero Euler characteristic, because they are exact. Hence $\dimNG{G} \ker (i_*) = \dimNG{G} \overline{\image (i_*)}$.
\end{proof}

\begin{lemma}\label{lem:sepsurf}
	Suppose $(M,\gamma)$ is a connected irreducible sutured manifold with infinite fundamental group and $\gamma$ is incompressible. Then there exists a Thurston norm minimizing representative $S\subset M\setminus (R_+\cup R_-)$ of $[R_-]\in H_2(M,\gamma;\Z)$ such that $M$ cut along $S$ is the union of two disjoint (not necessarily connected) compact manifolds $M_\pm$ with $R_\pm\subset \partial M_\pm$.
\end{lemma}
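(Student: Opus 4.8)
The plan is to realise $M$ as a cobordism from $R_-$ to $R_+$ and to take a level surface of minimal complexity. Write $\sigma := [R_-] = [R_+] \in H_2(M,\gamma;\Z)$; the two classes agree because $R_+ \cup \gamma \cup (-R_-) = \partial M$ bounds $M$ while $[\gamma]=0$ in $H_2(M,\gamma;\Z)$. The observation driving the argument is that the image of $\sigma$ under $H_2(M,\gamma;\Z)\to H_2(M,\partial M;\Z)$ vanishes, since $R_-$ is a relative $2$-cycle contained in $\partial M$ and $H_*(\partial M,\partial M)=0$; equivalently, the Poincar\'e--Lefschetz dual class of $\sigma$ in $H^1(M;\Z)$ is trivial, so every surface representing $\sigma$ has zero algebraic intersection number with every loop in $M$.

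First I would construct, by elementary Morse theory, a smooth function $f\colon M\to[-1,1]$ with $f^{-1}(-1)=R_-$ and $f^{-1}(1)=R_+$, with no critical points near $\partial M$, equal to the projection $S^1\times[-1,1]\to[-1,1]$ on each annular component of $\gamma$, and constant equal to $\tfrac12$ on each toroidal component. For a regular value $t\in(-1,1)$, $t\neq\tfrac12$, the surface $S_0:=f^{-1}(t)$ is then properly embedded in $(M,\gamma)$, disjoint from $R_+\cup R_-$, represents $\sigma$ (it is cobordant to $R_-$ rel $\gamma$), meets each annular suture in a single core curve, is disjoint from the toroidal sutures, and $M$ cut along $S_0$ is the disjoint union of $\overline{f^{-1}([-1,t])}\supseteq R_-$ and $\overline{f^{-1}([t,1])}\supseteq R_+$. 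So the set $\mathcal A$ of ``admissible'' representatives --- properly embedded surfaces $S$ in $(M,\gamma)$ with $[S]=\sigma$ for which $M$ cut along $S$ splits as $M_+\sqcup M_-$ with $R_\pm\subseteq\partial M_\pm$ --- is non-empty. I would take $S\in\mathcal A$ with $\chi_-(S)$ minimal, and among those with the fewest components. A clean-up --- removing sphere components (they bound balls as $M$ is irreducible and $\pi_1(M)$ is infinite), removing disk components (boundary-parallel as $\gamma$ is incompressible), and compressing, each time deleting any components of $S$ that bound a region meeting $\partial M$ only inside $\gamma$ so as to stay in $\mathcal A$ --- shows $S$ is incompressible with no sphere or disk components. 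So $S$ is a Thurston norm minimising admissible representative provided $\chi_-(S)=x_M(\sigma)$, which is the last point.

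The inequality $\chi_-(S)\ge x_M(\sigma)$ is clear. For the reverse, let $S^*$ be an honest Thurston norm minimising representative of $\sigma$. By the first paragraph there is a locally constant $g\colon M\setminus S^*\to\Z$, unique up to a constant, jumping by $+1$ across $S^*$ along its positive normal; normalise $\min g=0$. The orientation conventions of a sutured manifold, together with $[S^*]=[R_-]$, force $g(R_+\cap A)=g(R_-\cap A)+1$ across each annular suture $A$; hence on each connected component of the suture graph (vertices: components of $R_\pm$; edges: annular sutures, each joining an $R_-$-vertex to an $R_+$-vertex) all $R_-$-vertices sit at one $g$-level and all $R_+$-vertices one level higher. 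If the union $T_k$ of the components of $S^*$ separating level $k-1$ from level $k$ satisfies $[T_k]=0$ in $H_2(M,\gamma;\Z)$ --- and one checks $[T_k]$ equals the sum of the classes $[R_-\cap\partial_\alpha]$ over the suture-graph components $\partial_\alpha$ of base level $k-1$, so in particular $[T_k]=0$ whenever no component has base level $k-1$ --- then deleting $T_k$ gives a representative of $\sigma$ of no larger complexity, collapsing two $g$-levels into one. Iterating, one aims to reach a representative whose level function takes only the values $0$ and $1$, with $R_-\subseteq g^{-1}(0)$ and $R_+\subseteq g^{-1}(1)$; setting $M_-:=\overline{g^{-1}(0)}$, $M_+:=\overline{g^{-1}(1)}$ makes it admissible, so $\chi_-(S)\le\chi_-(S^*)=x_M(\sigma)$.

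The genuine obstacle is this last reduction of a norm minimising representative to admissible form: pinning down the orientation bookkeeping across the annular sutures, and organising the layer-deletions so that they really do bring all of $R_-$ to the bottom level and all of $R_+$ to the top while never increasing $\chi_-$ (the delicate case being when different suture-graph components a priori want to sit at different levels). The homological inputs --- $\sigma\mapsto 0$ in $H_2(M,\partial M;\Z)$ and $[\partial S^*]=[\partial R_-]$ in $H_1(\gamma;\Z)$ --- are what produce $g$ and the removable layers; the Morse-theoretic construction of $f$ and the clean-up of $S$ are routine $3$-manifold topology.
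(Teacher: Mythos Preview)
You acknowledge a gap in the final step, and as written your argument is indeed incomplete there; but the gap dissolves once you observe that \emph{every} representative of $[R_-]$ disjoint from $R_\pm$ already separates $R_+$ from $R_-$. This is exactly how the paper proceeds. It simply takes a properly embedded $T$ with $[T]=[R_-]$, $\chi_-(T)=x_M([R_-])$, and $T\cap R_\pm=\emptyset$, cleans it up as you describe (delete spheres by irreducibility; delete disks since $\gamma$ is incompressible; incompressibility of the result via the loop theorem), and then invokes the intersection pairing
\[
H_2(M,\gamma;\Z)\times H_1(M,R_+\cup R_-;\Z)\longrightarrow\Z.
\]
For any path $p$ from $R_+$ to $R_-$ one computes $\langle[R_-],[p]\rangle=1$ (represent $[R_-]$ by its push-in $R_-'$: the path starts outside the collar of $R_-$ and ends inside, net crossing $+1$). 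So no such path can avoid $S$, and $S$ separates $R_+$ from $R_-$. No Morse function, no minimisation within an admissible class, no layer deletions.

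In your own language: the level function $g$ for the norm-minimiser $S^*$ is \emph{automatically} constant on all of $R_-$ and constant on all of $R_+$, with the two values differing by~$1$. Indeed, a path through $M$ between two components of $R_-$ has algebraic intersection $0$ with the push-in $R_-'$ (out of one collar, into another), hence intersection $0$ with $[S^*]=[R_-']$, so $g$ agrees at the endpoints; similarly for $R_+$, and the mixed case gives $+1$ as above. Your worry about different suture-graph components sitting at different base levels is therefore unfounded; once you see this, $S^*$ itself is admissible, and the Morse construction of $f$, the minimisation within $\mathcal A$, and the layer-deletion scheme all become unnecessary.
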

\begin{proof}
	Let $T$ be a properly embedded surface homologous to $[R_-]$, with $\chi_-(T)=x_M([R_-])$ and the intersection of $T$ with $R_\pm$ is empty. We will show that a subsurface of $T$ has the desired properties.
	
	First we find a subsurface $S$ of $T$ which is Thurston norm minimizing. By the definition of Thurston norm minimizing we have to show that there is a subsurface $S\subset T$ which is homologous to $T$, which has the same complexity, does not have sphere or disk components, and is incompressible.
	
	Since $M$ is irreducible every sphere component of $T$ is null homologous and we can omit it. Now let $D\subset T$ be a disk component. From the definition one has $\partial D\subset \gamma$. Therefore we have to consider two cases. The first case that $\partial D$ is homotopically trivial in $\gamma$. In this case we can close the circle with another disk in $\gamma$ and obtain a sphere. By the assumption $M$ is irreducible and therefore this sphere bounds a $3$-ball. Hence the disk $D$ is homologically trivial in $H_2(M,\gamma;\Z)$. Therefore $[T]=[T\setminus D]$ and we define $S:=T\setminus D$. Note that $S$ has the same complexity as $T$. 
	The other case that $\partial D$ is non trivial in $\gamma$ can not occur because $\gamma$ is incompressible.
	That $S$ is indeed incompressible is a consequence of the loop theorem and the fact that it has minimal complexity. See also ~\cite[Chapter 3 C.22]{AFW15} or~\cite[Lemma 5.7]{Ca07}.
	
	It remains to show that $S$ separates the manifold $M$ into at least two disjoint parts.
	We have an intersection form:
	\[H_2(M,\gamma;\Z) \times H_1(M,R_+\cup R_-;\Z)\to \Z\]
	Let $p$ be a path from $R_+$ to $R_-$, then the intersection number of $[p]$ and $[R_-]$ is equal to 1. So every surface homologous to $[R_-]$ has to intersect $p$ at least once and therefore separates $R_-$ and $R_+$.
\end{proof}

\begin{lemma}\label{lem:inequaility}
	With the notation of the previous lemma and the additional assumption that $R_-$ is incompressible one has
	\[\frac{1}{2} \big(\chi(S)-\chi(R_+)\big)\leq b_2^{(2)}(M,R_-).  \]
\end{lemma}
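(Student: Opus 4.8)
The plan is to use the separating Thurston norm minimizing surface $S$ furnished by Lemma~\ref{lem:sepsurf}, to regard the component $M_+\supseteq R_+$ as a cobordism from $R_+$ to $S$ on which the Euler-characteristic defect $\chi(S)-\chi(R_+)$ is concentrated, to bound $\BB_2(M_+,R_+\subset M)$ below by this defect, and then to carry the estimate over to $\BB_2(M,R_-)$ by excision and \Poincare Lefschetz duality. Throughout I would use Equation~(\ref{eq:eulerchar}), in its relative form, to pass between $\chi$ and $\chi^{(2)}$. (If $[R_-]=0$ then $S=\emptyset$ and the claim is immediate, so assume $S\neq\emptyset$.)

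The first step is to see that all the extreme degree terms occurring below vanish. Since $S$ is Thurston norm minimizing it is incompressible with no disk or sphere components; as $M$ is irreducible, van Kampen applied to $M=M_-\cup_S M_+$ shows that $\pi_1$ of each component $W$ of $M_\pm$ injects into $\pi_1(M)$. Because $S\neq\emptyset$, the boundary of every such $W$ contains a copy of some component of $S$, which is non-spherical and $\pi_1$-injective, so $\pi_1(W)$ is infinite and $\HH_0(W\subset M)=0$; moreover, by the induction principle (Equation~(\ref{eq:induction}), in its form for pairs) the $\ell^2$-Betti numbers over $\mathcal N(\pi_1 M)$ of $M_\pm$, of their boundary pieces, and of the pairs $(M_\pm,R_\pm)$ and $(M_\pm,S)$ coincide with the intrinsic ones. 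Consequently $\BB_0(M_\pm,R_\pm\subset M)=\BB_0(M_\pm,S\subset M)=0$, and by \Poincare Lefschetz duality applied componentwise (Theorem~\ref{thm:poindual}) also $\BB_3(M_\pm,R_\pm\subset M)=\BB_3(M_\pm,S\subset M)=0$.

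Now for the estimate itself. Since $S$ is disjoint from $R_+$, Lemma~\ref{lem:sepsurf} gives $R_+\subseteq M_+$; as $M_+$ is a $3$-manifold whose suture part $\gamma\cap M_+$ is a union of annuli and tori, $\chi(M_+)=\tfrac12\chi(\partial M_+)=\tfrac12(\chi(R_+)+\chi(S))$, hence $\chi(M_+,R_+)=\tfrac12(\chi(S)-\chi(R_+))$. With the extreme $\ell^2$-Betti numbers vanishing, $\chi^{(2)}(M_+,R_+\subset M)=\BB_2(M_+,R_+\subset M)-\BB_1(M_+,R_+\subset M)$, so
\[ \BB_2(M_+,R_+\subset M)=\BB_1(M_+,R_+\subset M)+\tfrac12\big(\chi(S)-\chi(R_+)\big)\ \geq\ \tfrac12\big(\chi(S)-\chi(R_+)\big). \]
To transfer this to $(M,R_-)$: the inclusion of pairs $(M_+,R_+)\hookrightarrow(M,R_+)$ induces a map $\HH_2(M_+,R_+\subset M)\to\HH_2(M,R_+)$ whose kernel, by the long exact sequence of the triple $(M,M_+,R_+)$, is a quotient of $\HH_3(M,M_+)$; and $\HH_\ast(M,M_+)\cong\HH_\ast(M_-,S\subset M)$ by the excision coming from $M=M_-\cup_S M_+$, whose degree-$3$ part is zero by the previous step. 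So the map is injective and $\BB_2(M_+,R_+\subset M)\leq\BB_2(M,R_+)$. Finally, \Poincare Lefschetz duality for $M$ (absorbing the amenable sutures $\gamma$, as is done implicitly for $(M,R_+)$ in Section~\ref{sec:basics}) gives $\BB_2(M,R_+)=\BB_1(M,R_-)$, and Proposition~\ref{prop:balanced} gives $\BB_1(M,R_-)=\BB_2(M,R_-)$. Chaining the three relations produces $\tfrac12(\chi(S)-\chi(R_+))\leq\BB_2(M,R_-)$.

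The step I expect to be the main obstacle is the package of vanishing statements in the second paragraph: it is the backbone of the whole argument, and it genuinely uses irreducibility (to exclude sphere and ball components), the incompressibility hypotheses on $S$, on $\gamma$ and on $R_-$ (to obtain $\pi_1$-injectivity of all the pieces, hence the induction principle and the relative form of \Poincare Lefschetz duality), and the balancedness of $(M,\gamma)$ at the very end. A secondary technical nuisance is the bookkeeping of the toroidal and annular sutures inside the duality isomorphisms. As an alternative to the Euler-characteristic computation, one could instead extract $\BB_1(M_+,R_+\subset M)\geq\tfrac12(\chi(S)-\chi(R_+))$ directly from the ``half lives, half dies'' Lemma~\ref{lem:hlhd} applied to the components of $M_+$.
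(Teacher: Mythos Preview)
Your argument is essentially correct, but it follows a genuinely different route from the paper's.

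The paper works on the $M_-$ side and applies the ``half lives, half dies'' Lemma~\ref{lem:hlhd} directly to $\partial M_-$: after a Mayer--Vietoris splitting $\HH_1(\partial M_-\subset M)\cong \HH_1(R_-\subset M)\oplus\HH_1(S\subset M)$ (using that $\gamma'=\gamma\cap M_-$ has vanishing $\ell^2$-homology), one gets $\tfrac12\big(b_1^{(2)}(R_-)+b_1^{(2)}(S)\big)=\dimNG{G}\ker\big(\HH_1(\partial M_-)\to\HH_1(M_-)\big)$, bounds this kernel by $\dimNG{G}\ker\big(\HH_1(R_-)\to\HH_1(M)\big)+b_1^{(2)}(S)$, and then reads off the last kernel from the long exact sequence of $(M,R_-)$ as being $\leq b_2^{(2)}(M,R_-)$. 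This yields $\tfrac12(\chi(S)-\chi(R_-))\leq b_2^{(2)}(M,R_-)$ directly, without ever passing through $(M,R_+)$ or invoking balancedness. (The $R_+$ in the displayed statement is evidently a slip; the proof and the subsequent Corollary both use $R_-$.)

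Your approach instead sits on the $M_+$ side, replaces Lemma~\ref{lem:hlhd} by the Euler-characteristic identity $\chi^{(2)}=\chi$ for the pair $(M_+,R_+)$, and then transports the bound to $(M,R_-)$ via excision, Poincar\'e--Lefschetz duality, and Proposition~\ref{prop:balanced}. This is a legitimate alternative and is arguably more elementary in that it sidesteps Lemma~\ref{lem:hlhd}; on the other hand it is more circuitous and your final step $\BB_1(M,R_-)=\BB_2(M,R_-)$ genuinely uses that $(M,\gamma)$ is balanced, which is \emph{not} among the hypotheses of the lemma as stated (though it is present everywhere the lemma is used). If you want a version valid without balancedness, note that the same chain with $\chi^{(2)}(M,R_-)=\tfrac12(\chi(R_+)-\chi(R_-))$ in place of Proposition~\ref{prop:balanced} gives $\tfrac12(\chi(S)-\chi(R_-))\leq b_2^{(2)}(M,R_-)$, matching the paper.

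One shared technical point: your computation $\chi(M_+,R_+)=\tfrac12(\chi(S)-\chi(R_+))$ tacitly assumes $\chi(\gamma\cap\partial M_+)=0$, i.e.\ that $\partial S$ meets $\gamma$ in essential curves so that the pieces of $\gamma$ on either side of $S$ are still annuli and tori. The paper makes the same implicit assumption when it asserts $\HH_1(\gamma'\subset M)=0$; it is harmless (one can always isotope $\partial S$ to remove inessential curves since $\gamma$ is incompressible and $M$ is irreducible), but it is worth making explicit.
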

\begin{proof}
	Applying Mayer-Vietoris on $U:=R_-\cup S$ and $V=\gamma '$ for the boundary $\partial M_- = R_+\cup \gamma ' \cup S$ we get an weak isomorphism \[H^{(2)}_1(\partial M_-\subset M)\cong H^{(2)}_1(R_-\subset M)\oplus H^{(2)}_1(S\subset M).\] Here we used that $\gamma'\subset \gamma$ is incompressible in $M$. We set $G=\pi_1(M)$ and will consider all $\ell^2$-homology with the coefficient system coming from $M$. Therefore, we drop ``$\subset M$'' from the notation. From Lemma~\ref{lem:hlhd} applied to the boundary of $M_-$, we get 
\begin{align*}
\frac{1}{2} \big(b_1^{(2)}(R_-)+b_1^{(2)}(S) \big)&= \dimNG{G} \ker \big(H_1^{(2)}(\partial M_-)\rightarrow H_1^{(2)}(M_-)\big)
\end{align*}By the standard inequality \[\dimNG{G}\ker (i\colon A\oplus B\to C)\leq \dimNG{G}\ker (i\colon 	A\to C)+\dimNG{G} B\] applied to $H^{(2)}_1(\partial M_-)= H^{(2)}_1(R_-)\oplus H^{(2)}_1(S)\to\HH_1(M_-)$ we obtain further
\begin{align*}
\frac{1}{2} \big(b_1^{(2)}(R_-)+b_1^{(2)}(S) \big)&= \dimNG{G} \ker \big(H_1^{(2)}(\partial M_-)\rightarrow H_1^{(2)}(M_-)\big) \\&\leq \dimNG{G} \ker\big(H_1^{(2)}(R_-)\rightarrow H_1^{(2)}(M_-)\big)+b_1^{(2)}(S) \\
	&\leq \dimNG{G} \ker\big(H_1^{(2)}(R_-)\rightarrow H_1^{(2)}(M)\big)+b_1^{(2)}(S) \\
	&\leq \dimNG{G} \image\big(H_2^{(2)}(M,R_-)\rightarrow H_1^{(2)}(R_-)\big)+b_1^{(2)}(S) \\
	&\leq b_2^{(2)}(M,R_-)+b_1^{(2)}(S). 
	\end{align*}
Recall that we dropped ``$\subset M$'' from the notation but by assumption $R_-$ and $S$ are incompressible and hence $\BB_1(R_-\subset M)=\BB_1(R_-)$ and $\BB_1(S\subset M)=\BB_1(S)$.
For surfaces with infinite fundamental group one has $-\chi(S)=b_1^{(2)}(S)$ and $-\chi(R_-)=b_1^{(2)}(R_-)$, which finishes the proof.
\end{proof}
Now the direction $(2)\Rightarrow (1)$ of the main theorem follows easily.
\begin{cor}
 Let $(M,\gamma)$ be an irreducible connected balanced sutured manifold with infinite fundamental group and such that $\gamma$ and $R_-$ are incompressible. If $\BB_2(M,R_-)=0$ then $M$ is taut. 
\end{cor}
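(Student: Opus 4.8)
The plan is to feed the hypothesis $b_2^{(2)}(M,R_-)=0$ into the chain of estimates assembled in Lemmas~\ref{lem:sepsurf}, \ref{lem:hlhd} and \ref{lem:inequaility}. Since $(M,\gamma)$ is connected, irreducible, has infinite fundamental group and $\gamma$ is incompressible, Lemma~\ref{lem:sepsurf} yields a Thurston norm minimizing representative $S\subset M\setminus(R_+\cup R_-)$ of the class $[R_-]\in H_2(M,\gamma;\Z)$ which separates $M$ into compact pieces $M_\pm$ with $R_\pm\subset\partial M_\pm$. As $S$ is Thurston norm minimizing it has no sphere or disk components, so all its components have non-positive Euler characteristic and $-\chi(S)=\chi_-(S)=x_M([R_-])$.

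Before applying the last lemma I normalize Euler characteristics. Because $M$ is irreducible with infinite fundamental group, $M\neq D^3$, so $\partial M$ has no sphere components; and because $\gamma$ is incompressible, $R_\pm$ has no disk components, since the boundary of such a disk would be an essential core circle of an annulus component of $\gamma$. Hence $\chi_-(R_\pm)=-\chi(R_\pm)$, and since $M$ is balanced, $\chi_-(R_+)=\chi_-(R_-)$, so $\chi(R_+)=\chi(R_-)$.

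Now Lemma~\ref{lem:inequaility} applies to $S$ (legitimately, since $R_-$ is incompressible) and gives $\tfrac{1}{2}\big(\chi(S)-\chi(R_+)\big)\le b_2^{(2)}(M,R_-)=0$, whence $\chi(S)\le\chi(R_+)=\chi(R_-)$, i.e.\ $-\chi(R_-)\le-\chi(S)$. Combining with the previous paragraph, $\chi_-(R_-)=-\chi(R_-)\le-\chi(S)=x_M([R_-])$, while $\chi_-(R_-)\ge x_M([R_-])$ trivially because $R_-$ is itself a properly embedded representative of its homology class. Therefore $\chi_-(R_-)=x_M([R_-])$ and $R_-$ is Thurston norm minimizing in $(M,\gamma)$. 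By balancedness and $[R_+]=[R_-]$ one then has $\chi_-(R_+)=\chi_-(R_-)=x_M([R_-])=x_M([R_+])$, and $R_+$ has no sphere or disk components, so $R_+$ also realizes the Thurston norm; together with irreducibility of $M$ (and the incompressibility of $R_+$, which holds by hypothesis in the setting of Theorem~\ref{thm:main} and otherwise follows from \cite[Chapter 3 C.22]{AFW15} whenever $R_+$ has no annulus or torus components), this says exactly that $M$ is taut.

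The nontrivial work is not in this corollary but in Lemma~\ref{lem:inequaility} and the ``half lives, half dies'' Lemma~\ref{lem:hlhd} underlying it; here the only point demanding attention is the bookkeeping that converts the $\ell^2$-Euler-characteristic estimate into a statement about the complexity $\chi_-$, namely checking that the incompressibility hypotheses kill disk and sphere components so that $-\chi$ and $\chi_-$ agree for $S$ and for $R_\pm$.
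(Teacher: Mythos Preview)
Your proof is correct and follows essentially the same route as the paper: invoke Lemma~\ref{lem:sepsurf} to produce the norm-minimizing separating surface $S$, feed the hypothesis $b_2^{(2)}(M,R_-)=0$ into Lemma~\ref{lem:inequaility}, and compare with the trivial inequality $x_M([R_-])\le\chi_-(R_-)$ to force equality. You are more careful than the paper about the bookkeeping that identifies $-\chi$ with $\chi_-$ for $R_\pm$ (ruling out disk and sphere components via incompressibility of $\gamma$ and irreducibility of $M$), and you correctly flag that incompressibility of $R_+$ is not literally among the corollary's hypotheses even though tautness requires it; these are clarifications rather than deviations.
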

\begin{proof}
Let $S$ be a surface obtained from Lemma~\ref{lem:sepsurf}. By construction of $S$ one has $-\chi(S)=x_M([R_-])$ and hence $\chi(S)-\chi(R_-)\geq 0$. By assumption we have $\BB_2(M,R_-)=0$ and Lemma~\ref{lem:inequaility} now implies that 
\[0\leq\chi(S)-\chi(R_-)=-x_M([R_-])-\chi(R_-)\leq 0.\] Therefore we get $x_M([R_-])=-\chi(R_-)$ and since $(M,\gamma)$ was balanced we obtain $x_M([R_-])=-\chi(R_+)$, too. But this is the definition of a taut sutured manifold.
\end{proof}
\subsection{Taut implies $\ell^2$-acyclic}
We are now going to show that if $(M,\gamma)$ is taut then $\HH_\ast(M,R_-)$ is zero.
Since $\ell^2$-Betti numbers are multiplicative under finite covers (Proposition \ref{prop:multiplicativ}) and a finite cover of a taut sutured manifold is again taut (Proposition~\ref{prop: tautfinitecover}) the above statement is true if and only if it is true for a finite cover. The proof consists of three steps. 
\begin{enumerate}
	\item There exists a finite cover $\widehat{M}\rightarrow M$ and a decomposition surface $S\subset\widehat{M}$ such that $\widehat{M}\sutdec{S} \widehat{M}'$ is a product sutured manifold.
	\item Denote by $\phi\in H^1(\widehat{M};\Z)$ the \Poincare dual of $S$, then $H_\ast^{\phi,(2)}(\widehat{M},\widehat{R_-})=0.$
	\item We use the second step and Schick's approximation result to conclude $\HH(M,R_-)=0$.
\end{enumerate}

For the first step we need the virtual fibering theorem due to Agol. Notice that the fundamental group of an irreducible 3-manifold with non-empty boundary is virtually RFRS ~\cite[Corollary
4.8.7]{AFW15}, which was proved by Przytycki and Wise~\cite{PW17}. We don't need the precise definition of RFRS, we only need that the following theorem holds in our situation.
\begin{theorem}\cite[Theorem 5.1]{Ag08}\label{thm:agol}
Let $M$ be an irreducible 3-manifold with empty or toroidal boundary. Assume $\pi_1(M)$ is infinite and virtually RFRS. If $\Sigma$ is a Thurston norm minimizing surface, then there is a finite sheeted cover $p\colon N\to M$ such that $p^{-1}(\Sigma)$ is a quasi-fiber. 
\end{theorem}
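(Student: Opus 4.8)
The plan is to deduce the statement from Agol's virtual fibering criterion in its cohomological form (the substance of \cite[Theorem~5.1]{Ag08}): if $\pi_1(M)$ is RFRS, then every class in $H^1(M;\Z)$ pulls back, in a suitable finite cover, into the closure of a fibered cone of the Thurston norm. Combining this with the \Poincare Lefschetz duality identifying the class $\phi\in H^1(M;\Z)$ dual to $\Sigma$ with $[\Sigma,\partial\Sigma]\in H_2(M,\partial M;\Z)$, with Gabai's Theorem~\ref{thm:thurstonnormmult}, and with the definition of a quasi-fiber (equivalently, the Remark following it) will yield the result. If one prefers to invoke Agol's theorem only for honestly RFRS groups, one first passes to a finite cover $M_0\to M$ with $\pi_1(M_0)$ RFRS; by Theorem~\ref{thm:thurstonnormmult} the preimage of $\Sigma$ is Thurston norm minimizing in $M_0$, and any further finite cover realizing it as a quasi-fiber is also a finite cover of $M$ with the desired property.

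Concretely, I would argue as follows. Assume first that $\phi\neq 0$. By Agol's theorem applied to $\phi$, there is a finite cover $p\colon N\to M$ such that $p^{\ast}\phi$, equivalently the class $[p^{-1}(\Sigma)]\in H_2(N,\partial N;\R)$, lies in the closure $\overline{\mathcal C}$ of a fibered cone $\mathcal C$ of the Thurston norm $x_N$. Now $p^{-1}(\Sigma)$ is a properly embedded surface in $(N,\partial N)$ which is \Poincare Lefschetz dual to $p^{\ast}\phi$, and by Theorem~\ref{thm:thurstonnormmult} it is Thurston norm minimizing in $(N,\partial N)$; incompressibility and the absence of disk and sphere components descend from $\Sigma$ to $p^{-1}(\Sigma)$, since a finite cover of an incompressible surface is incompressible and a finite cover of a surface without disk or sphere components again has none, so $p^{-1}(\Sigma)$ is Thurston norm minimizing in the sense of Section~\ref{sec:Definitions}. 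Next, pick a primitive integral class $[F]$ in the interior of $\mathcal C$; it is represented by a fiber $F$ of a fibration of $N$ over $S^1$. By convexity of $\overline{\mathcal C}$ we have $[F]+[p^{-1}(\Sigma)]\in\overline{\mathcal C}$, and since $x_N$ is linear on $\overline{\mathcal C}$ we obtain $x_N([F])+x_N([p^{-1}(\Sigma)])=x_N([F]+[p^{-1}(\Sigma)])$. Together with the previous sentence this verifies both conditions in the definition of a quasi-fiber, so $p^{-1}(\Sigma)$ is a quasi-fiber.

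The main obstacle is not any single step but getting the dictionary right: Agol's theorem is naturally phrased in terms of cohomology classes and fibered faces, and one has to match it cleanly with the geometric statement about the embedded preimage surface and with the paper's notion of ``Thurston norm minimizing'' (which already incorporates incompressibility and the absence of disk and sphere components); once this is set up, the argument is essentially formal. A few edge cases deserve a word: (i) the reduction from virtually RFRS to RFRS, which uses only that Thurston norm minimality is preserved under covers (Theorem~\ref{thm:thurstonnormmult}); (ii) $\Sigma$ may be disconnected and $\phi$ non-primitive, so the relevant fiber $F$ may be a disconnected fiber of a fibration over $S^1$, which is harmless for the computation above; and (iii) the degenerate case $[\Sigma]=0$, in which one instead applies Agol's theorem to an arbitrary nonzero class in a finite cover which fibers over $S^1$ (such a cover exists by the theorem), after which condition~(2) in the definition of a quasi-fiber holds automatically for $p^{-1}(\Sigma)$.
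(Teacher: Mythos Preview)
The paper does not give its own proof of this theorem: it is quoted from Agol \cite{Ag08} as a black box and used immediately in Proposition~\ref{prop:virtualvanishing}. Your proposal is a correct derivation of the paper's surface-theoretic formulation from Agol's original cohomological statement, via the Remark characterizing quasi-fibers through membership in the closure of a fibered cone together with Theorem~\ref{thm:thurstonnormmult}; there is therefore nothing in the paper to compare it against.
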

The next proposition is implicit in the article of Agol~\cite{Ag08}. We will show how it follows from Theorem ~\ref{thm:agol}.
\begin{prop}\label{prop:virtualvanishing}
	Let $(M,\gamma)$ be a connected taut sutured manifold with infinite fundamental group and such that $\gamma$ is incompressible. Then there exists a connected finite cover $(\widehat{M},\widehat{\gamma})$ and a decomposition surface $S$ in $\widehat{M}$, such that $\widehat{M}\sutdec{S}\widehat{M}'$ is a product sutured manifold and $[S]\in H_2(\widehat{M},\partial \widehat{M})$ is non-trivial.
\end{prop}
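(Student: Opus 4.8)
The plan is to reduce to the closed (or toroidal-boundary) situation via the double $DM(\gamma)$ so that Agol's virtual fibering theorem applies, then transport the resulting quasi-fiber back down to a cover of $M$ and invoke Lemma~\ref{lem:quasifiber}. First I would form $N:=DM(\gamma)$, which by construction is a sutured manifold $(N,\emptyset,\emptyset,\partial N)$ with empty or toroidal boundary; since $M$ is taut and $\gamma$ is incompressible, Gabai's Lemma~\cite[Lemma 3.7]{Ga83} (used already in the proof of Proposition~\ref{prop: tautfinitecover}) tells us that $\Sigma_0:=R_+\cup R_-$ is a Thurston norm minimizing decomposition surface in $N$. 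Because $\pi_1(M)$ is infinite, $N$ is irreducible with infinite fundamental group, and by~\cite[Corollary 4.8.7]{AFW15} (Przytycki--Wise) $\pi_1(N)$ is virtually RFRS. Hence Theorem~\ref{thm:agol} furnishes a finite cover $q\colon \widehat N\to N$ in which $q^{-1}(\Sigma_0)$ is a quasi-fiber.

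Next I would descend this cover to $M$. The inclusion $M\hookrightarrow DM(\gamma)$ together with the covering $\widehat N\to N$ restricts, over one of the two copies of $M$ inside $N$, to a finite cover $p\colon \widehat M\to M$; the preimage $p^{-1}(R_\pm)$ and $p^{-1}(\gamma)$ give $\widehat M$ a sutured structure which is taut by Proposition~\ref{prop: tautfinitecover}, and $\widehat\Sigma := p^{-1}(R_+\cup R_-)$ is exactly the restriction of the quasi-fiber $q^{-1}(\Sigma_0)$ to $\widehat M\subset \widehat N$. Passing to a connected component of $\widehat M$ if necessary (and noting that $D\widehat M(\widehat\gamma)$ is a further finite cover of $DM(\gamma)$, so by Theorem~\ref{thm:thurstonnormmult} the preimage of $\Sigma_0$ stays Thurston norm minimizing, hence stays a quasi-fiber in the fibered cone), I get that in $D\widehat M(\widehat\gamma)$ the surface $\widehat R_+\cup \widehat R_-$ is a quasi-fiber.

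Now I would apply Lemma~\ref{lem:quasifiber} to the taut sutured manifold $D\widehat M(\widehat\gamma)$ with quasi-fiber $\widehat\Sigma$. Here I must rule out the case that $\widehat\Sigma$ is itself the fiber of a fibration: if it were, then $D\widehat M(\widehat\gamma)$ would fiber with fiber $\widehat R_+\sqcup\widehat R_-$, forcing $\widehat M$ itself to be a product sutured manifold $\widehat R_-\times I$, and I would simply take $S$ to be a single fiber copy of $\widehat R_-$ inside $\widehat M$ — a decomposition surface whose complement is a product and whose class is non-trivial (it is dual to the $I$-coordinate). Otherwise Lemma~\ref{lem:quasifiber} gives a decomposition surface $S$ in $D\widehat M(\widehat\gamma)\sutdec{\widehat\Sigma}(\text{two copies of }\widehat M)$ — i.e.\ a decomposition surface in $\widehat M$ — with $\widehat M\sutdec{S}\widehat M'$ a product sutured manifold and $[S]\in H_2(\widehat M,\partial\widehat M;\Z)$ non-trivial, which is exactly the assertion.

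I expect the main obstacle to be bookkeeping around the doubling construction: one must check carefully that decomposing $D\widehat M(\widehat\gamma)$ along the quasi-fiber $\widehat R_+\cup \widehat R_-$ really returns (copies of) $\widehat M$ with its given taut sutured structure — this is the compatibility already noted in the excerpt's discussion of $DM(\gamma)$ — and that the fibered-cone condition ``$x([F])+x([\widehat\Sigma])=x([F]+[\widehat\Sigma])$'' needed to invoke Lemma~\ref{lem:quasifiber} is genuinely supplied by Agol's theorem (the quasi-fiber lies in the boundary of a fibered cone, by the Remark following the definition of quasi-fiber). Everything else is an application of results already established: Theorem~\ref{thm:agol}, Proposition~\ref{prop: tautfinitecover}, Lemma~\ref{lem:closeddec}, and Lemma~\ref{lem:quasifiber}.
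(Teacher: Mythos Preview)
Your overall strategy---double $M$ to get a manifold with empty or toroidal boundary, invoke Agol so that the lift of $R_+\cup R_-$ becomes a quasi-fiber, then feed this into Lemma~\ref{lem:quasifiber}---is exactly the paper's approach. But you insert an unnecessary detour that creates two genuine gaps.

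\textbf{First gap: the redoubling step.} After obtaining the cover $q\colon \widehat N\to DM(\gamma)$ from Agol's theorem, you restrict to a cover $\widehat M\to M$ and then form $D\widehat M(\widehat\gamma)$, claiming that $\widehat R_+\cup\widehat R_-$ is a quasi-fiber there. Your justification is that $D\widehat M\to DM(\gamma)$ is a finite cover, so ``by Theorem~\ref{thm:thurstonnormmult} the preimage of $\Sigma_0$ stays Thurston norm minimizing, hence stays a quasi-fiber.'' That ``hence'' is false: Thurston norm minimizing does not imply quasi-fiber, and the quasi-fiber property was established only in $\widehat N$, not in $DM(\gamma)$. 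Since $D\widehat M$ need not cover $\widehat N$ (the cover $\widehat N$ has no reason to respect the $\Z/2$-symmetry of the double), you cannot transport the fibered-cone condition across. The fix is to drop the redoubling entirely: decompose $\widehat N$ itself along the quasi-fiber $q^{-1}(\Sigma_0)$. The resulting sutured manifold is a finite cover of $M$ (because decomposing $DM(\gamma)$ along $R_+\cup R_-$ returns two copies of $M$), and Lemma~\ref{lem:quasifiber} applies directly to $\widehat N$ to produce $S$ in a connected component $\widehat M$ of that cover. This is what the paper does.

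\textbf{Second gap: the product case.} If $\widehat M\cong \widehat R_-\times I$, you propose $S=\widehat R_-\times\{t\}$ and assert that $[S]$ is ``dual to the $I$-coordinate.'' But $H^1(\widehat R_-\times I)=H^1(\widehat R_-)$ has no $I$-coordinate class, and in fact $[\widehat R_-\times\{t\}]=0$ in $H_2(\widehat M,\partial\widehat M;\Z)$: the $3$-chain $\widehat R_-\times[0,t]$ has relative boundary exactly $\widehat R_-\times\{t\}$. The paper's choice is instead an annulus $S=c\times I$ for a homologically non-trivial simple closed curve $c\subset \widehat R_-$; then $\widehat M\sutdec{S}\widehat M'$ is still a product (over $\widehat R_-\setminus\nu(c)$) and $[S]\neq 0$.
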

\begin{proof}
If $(M,\gamma)$ is a product sutured manifold then one can take the annulus obtained from a homologically non trivial curve in $R_-$ times the interval. Therefore we will in the following assume that $(M,\gamma)$ is a taut sutured manifold which is not a product. Recall the construction of the double $DM(\gamma)$ of a sutured manifold $M$:
\[ DM(\gamma):= M \sqcup_{R_\pm} M. \]
This is a 3-manifold, which is closed or has toroidal boundary. By our assumptions is $\gamma$ incompressible and $(M,\gamma)$ is taut. Hence $R_-\cup R_+$ is Thurston norm minimizing in $DM(\gamma)$ (\cite[Lemma 3.7]{Ga83}).
 
We can use the virtual fibering Theorem~\ref{thm:agol} to obtain a finite cover $p\colon W\to DM(\gamma)$ such that the surface $p^{-1}(R_- \cup R_+)$ is a quasi-fiber. We write $\Sigma=p^{-1}(R_- \cup R_+)$.
The taut sutured manifold $W'$ given by $W\sutdec{\Sigma} W'$ is by construction a finite cover of $M$. Therefore by Lemma~\ref{lem:quasifiber} applied to $\Sigma$ and $W$ we see that a connected component $\widehat{M}\subset W'$ is the desired finite cover of $M$.
\end{proof}
Now we can finally prove the rest of our main result.
\begin{theorem}\label{thm:lastthm}
	If $M$ is a connected taut sutured manifold with infinite fundamental group and $\gamma$ is incompressible, then $\BB_\ast (M,R_-)=0$.
\end{theorem}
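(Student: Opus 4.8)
The plan is to carry out the three-step scheme announced just before the statement.

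\emph{Step 1: pass to a convenient finite cover.} Since $(M,\gamma)$ is taut, $\pi_1(M)$ is infinite, and $\gamma$ is incompressible, Proposition~\ref{prop:virtualvanishing} supplies a connected finite cover $(\widehat M,\widehat\gamma)\to(M,\gamma)$ together with a decomposition surface $S\subset\widehat M$ such that $\widehat M\sutdec{S}\widehat M'$ is a \emph{product} sutured manifold and $[S]\neq 0$ in $H_2(\widehat M,\partial\widehat M;\Z)$. Because $\ell^2$-Betti numbers are multiplicative under finite covers (Proposition~\ref{prop:multiplicativ}), $\BB_\ast(M,R_-)=0$ is equivalent to $\BB_\ast(\widehat M,\widehat R_-)=0$, so it suffices to prove the latter.

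\emph{Step 2: vanishing of the twisted Betti numbers.} Let $\phi\in H^1(\widehat M;\Z)$ be the \Poincare dual of $[S]$; it is non-trivial. Since $\widehat M\sutdec{S}\widehat M'$ results in a product sutured manifold, Lemma~\ref{lem:vanishing} applies and gives $H^{\phi,(2)}_\ast(\widehat M,\widehat R_-)=0$. By Equation~(\ref{eq:dimfielddimneu}) this is the same as $H_\ast(\widehat M,\widehat R_-;\C(t)^\phi)=0$; since $\C[t^{\pm1}]$ is a principal ideal domain, this says precisely that the twisted homology $H_p(\widehat M,\widehat R_-;\C[t^{\pm1}]^\phi)$ is a finitely generated torsion $\C[t^{\pm1}]$-module for every $p$, and in particular the ordinary rational Betti numbers of the finite cyclic covers of $\widehat M$ determined by $\phi\bmod k$ are bounded independently of $k$.

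\emph{Step 3: upgrade to $\mathcal{N}(\pi_1\widehat M)$ via Schick's approximation.} It remains to pass from the $\mathcal{N}(\Z)$-level vanishing of Step~2 to the vanishing of the genuine $\ell^2$-Betti numbers over $\mathcal{N}(\pi_1\widehat M)$. Here I would invoke Schick's approximation theorem (Theorem~\ref{thm:appschick}), which applies because $\pi_1(\widehat M)$ lies in $\mathcal{G}$ (Lemma~\ref{lem:classg}) and all its finite quotients do too. Fix $t\in\pi_1(\widehat M)$ with $\phi(t)$ generating $\phi(\pi_1\widehat M)$, so $\langle t\rangle\cong\Z$ and $\C[t^{\pm1}]\subset\C[\pi_1\widehat M]\subset\mathcal{N}(\pi_1\widehat M)$. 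Choose a nested chain $\pi_1(\widehat M)=G_1\supseteq G_2\supseteq\cdots$ of finite-index normal subgroups with $\bigcap_iG_i=\{e\}$ along which the order of $t$ in $G_1/G_i$ tends to infinity (possible since three-manifold groups are residually finite, cf.\ the proof of Lemma~\ref{lem:classg}); the quotients $G_1/G_i$ are finite, hence in $\mathcal{G}$. Theorem~\ref{thm:appschick}, together with the multiplicativity of Proposition~\ref{prop:multiplicativ} for these finite quotients, then gives $\BB_p(\widehat M,\widehat R_-)=\lim_i b_p(\widehat M_i,\widehat R_{-,i})/[\pi_1(\widehat M):G_i]$, where $\widehat M_i\to\widehat M$ is the cover corresponding to $G_i$. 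A somewhat delicate analysis of the $\pi_1(\widehat M)$-equivariant chain complex coming from the product decomposition $\widehat M\sutdec{S}\widehat M'$, combined with Step~2, shows that along such a chain the numerators are controlled by the torsion $\C[t^{\pm1}]$-modules of Step~2 — concretely this rests on the elementary fact that for a nonzero $f\in\C[t^{\pm1}]$ the corank of $f(t)$ acting on $\C[G_1/G_i]$ is at most $\deg(f)\cdot[\pi_1(\widehat M):G_i]/\operatorname{ord}_{G_1/G_i}(t)$, which is $o\bigl([\pi_1(\widehat M):G_i]\bigr)$. Hence the limit is $0$, so $\BB_\ast(\widehat M,\widehat R_-)=0$ and, by Step~1, $\BB_\ast(M,R_-)=0$.

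\emph{Where the difficulty lies.} Steps 1 and 2 are essentially bookkeeping built on the already-established Proposition~\ref{prop:virtualvanishing} and Lemma~\ref{lem:vanishing}. The real work is Step 3: Lemma~\ref{lem:vanishing} only produces vanishing over $\mathcal{N}(\Z)$ — equivalently, vanishing of twisted $\C(t)$-homology — and promoting this to the vanishing of the $\ell^2$-Betti numbers over $\mathcal{N}(\pi_1\widehat M)$ is not formal, since passing to further finite covers can inflate ordinary Betti numbers. Making this precise requires both the product structure of $\widehat M'$, which pins down the relevant chain complex over $\C[t^{\pm1}]$, and residual finiteness of three-manifold groups together with Schick's approximation theorem; carrying out the resulting dimension bookkeeping is the main technical obstacle.
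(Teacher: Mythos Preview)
Your Steps 1 and 2 are correct and match the paper. The gap is in Step 3. You invoke L\"uck--Schick approximation with \emph{finite} quotients $\pi/G_i$ and then try to bound the ordinary Betti numbers $b_p(\widehat M_i,\widehat R_{-,i})$ using only that $H_\ast(\widehat M,\widehat R_-;\C[t^{\pm1}]^\phi)$ is torsion. But that torsion statement concerns the chain complex after pushing forward along $\phi\colon\pi_1(\widehat M)\to\Z$; it says nothing direct about $C_\ast(\widetilde{\widehat M},\widetilde{\widehat R}_-)\otimes_{\Z[\pi]}\C[\pi/G_i]$, whose differentials involve arbitrary elements of $\pi$, not just powers of your chosen $t$. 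Your corank estimate for $f(t)$ acting on $\C[\pi/G_i]$ is correct, but there is no polynomial $f(t)$ in sight in the $\pi$-equivariant complex---the ``somewhat delicate analysis'' that is supposed to produce one is never carried out, and I do not see how to carry it out along these lines.

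The paper sidesteps this entirely by a simple observation you overlook: the product structure is inherited by \emph{every} finite cover. If $p_i\colon M_i\to \widehat M$ is the cover corresponding to a finite-index normal $\pi_i\lhd\pi$, then $S_i:=p_i^{-1}(S)$ is still a decomposition surface, $M_i\sutdec{S_i}M_i'$ is still a product (a cover of a product sutured manifold is a product), and $\phi_i:=p_i^\ast\phi$ is still its \Poincare dual. Hence Lemma~\ref{lem:vanishing} applies verbatim to each $M_i$, giving $H^{\phi_i,(2)}_\ast(M_i,R_{i,-})=0$ exactly. Now set $G_i=\pi_i\cap\ker\phi$; then $\pi_i/G_i\cong\Z$ and $[\pi/G_i:\pi_i/G_i]=[\pi:\pi_i]<\infty$, so $\pi/G_i$ is virtually cyclic and lies in $\mathcal G$. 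Multiplicativity (Proposition~\ref{prop:multiplicativ}) turns the vanishing over $\mathcal N(\pi_i/G_i)$ into $\BB_\ast(\widetilde M/G_i,\widetilde R_-/G_i;\mathcal N(\pi/G_i))=0$, and Schick's Theorem~\ref{thm:appschick} applied to the chain $\pi\supset G_1\supset G_2\supset\cdots$ (with \emph{virtually cyclic}, not finite, quotients) yields $\BB_\ast(\widehat M,\widehat R_-)=0$ as a limit of zeros. No Betti-number growth estimate is needed.
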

\begin{proof}
Let $M$ be a taut sutured manifold. Recall that for any finite cover $\widehat{M}\to M$ we have
	\[[\widehat{M}:M]\cdot \BB_\ast(M,R_-) = \BB_\ast(\widehat{M},\widehat{R}_- ) \]
	and if $M$ is taut then $\widehat{M}$ is taut as well. Therefore it is sufficient to prove the theorem for a suitable finite cover. By this observation together with Proposition~\ref{prop:virtualvanishing} we will assume that $(M,\gamma)$ admits a decomposition surface $S$ such that $M'$ defined by $M\sutdec{S} M'$ is a product sutured manifold and $[S]\in H_2(M,\partial M)$ is non trivial. If we denote by $\phi\in H^{1}(M;\Z)$ the \Poincare dual of $S$ then by Lemma~\ref{lem:vanishing} we have 
	\[H_1^{\phi,(2)}(M,R_-)=0. \]   
Since the fundamental group of $M$ is residually finite we obtain a nested sequence $\pi_1(M)=\pi\supset\pi_1\supset \pi_2\ldots$ of normal subgroups such that $\pi/\pi_i$ is finite and $\bigcap_{i\in I}\pi_i=\left\{e\right\}$. Denote by $p_i\colon M_i\to M$ the corresponding finite cover.
Denote by $S_i:=p_i^{-1}(S)$ the pre-image of the surface $S$ and by $\phi_i=p_i^*(\phi)$ the pull back of $\phi$. Obviously, $S_i$ is the \Poincare dual of $\phi_i$. Furthermore, $M_i\sutdec{S_i} M_i'$ results in a product sutured manifold since $M_i'$ is a cover of $M'$ and $M'$ is a product sutured manifold. Hence we can apply Lemma~\ref{lem:vanishing} again to obtain \[H_1^{\phi_i,(2)}(M_i, {R_i}_-)=0.\]
Denote by $G_i=\ker (\phi) \cap \pi_i$ the kernel of $\phi_i$. We see that $G_i$ is normal in $\pi$. Moreover, by the third isomorphism Theorem we have 
	\[ (\pi /G_i)/(\pi_i/G_i)\cong \pi/\pi_i  \]
	and hence $(\pi_i/G_i)\triangleleft (\pi /G_i)$ is finite index. This yields
	\begin{align*}
	0=\dimNG{\Z}H_1^{\phi_i}(M_i, R_{i})&= \BB_1(\widetilde{M}/G_i,\widetilde{R}_-/G_i;\mathcal{N}(\pi_i/G_i))\\
	&=[\pi:\pi_i]\cdot\BB_1(\widetilde{M}/G_i, \widetilde{R}_-/G_i;\mathcal{N}(\pi/G_i)) 
	\end{align*}
	and in particular
	\[\BB_1(\widetilde{M}/G_i, \widetilde{R}_-/G_i;\mathcal{N}(\pi/G_i))=0. \]
	The groups $\pi/G_i$ are by construction virtually cyclic and hence lie in the class $\mathcal{G}$. We can now apply Schick's approximation Theorem \ref{thm:appschick} to the nested and cofinal sequence of normal subgroups $\pi_1(M )\supset G_1\supset G_2\ldots$ and obtain
	\[	\BB_1(M,R_-)=\lim_{i\to\infty} \BB_1 (\widetilde{M}/G_i,\widetilde{R_-} ;\mathcal{N}(\pi/G_i)) = 0 . \]
\end{proof}
\section{Applications}\label{sec:app} \noindent
In this section we prove Theorem~\ref{thm:haupt:luckfried}:
\begin{mytheorem}
	Let $N$ be a connected compact oriented irreducible 3-manifold with empty or toroidal boundary and let $\phi\in H^1(N;\Z)$ be a primitive cohomology class. We write $N_{\ker \phi}\to N$ for the cyclic covering corresponding to $\ker \phi$. We have
\[ b^{(2)}_1 (N_{\ker\phi}) = x_N( \phi ). \]
\end{mytheorem}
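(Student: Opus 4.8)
The plan is to cut $N$ along a Thurston norm minimizing surface that is \Poincare dual to $\phi$, apply Theorem~\ref{thm:main2} to the resulting taut sutured manifold, and then reassemble $N_{\ker\phi}$ from its $\Z$-translates. Throughout, $\BB_1(N_{\ker\phi})$ is read as the $\ell^2$-Betti number of the (in general non-compact) covering space with respect to its fundamental group $\ker\phi$ acting on the common universal cover $\widetilde N$, so the argument takes place in the setting of $\ell^2$-homology of not necessarily cocompact complexes.

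First assume $N\neq S^1\times D^2$. Since $\phi$ is primitive, its \Poincare dual is represented by a properly embedded, possibly disconnected surface $\Sigma$ that is Thurston norm minimizing in the sense of Section~\ref{sec:Definitions}. Put $M:=N\setminus\nu(\Sigma)$ and give $N$ the sutured structure $\gamma=\partial N$; by Lemma~\ref{lem:closeddec} the decomposition $N\sutdec{\Sigma}M$ yields a taut sutured manifold $(M,\Sigma_+,\Sigma_-,\gamma)$, and since $\Sigma$ is incompressible with no disk or sphere components, $M$ is irreducible, $\Sigma_\pm$ and $\gamma$ are incompressible, and every component of $M$ has infinite fundamental group (a component with finite $\pi_1$ would have a union of $2$-spheres as boundary, which is impossible because $\partial M=\Sigma_+\cup\gamma\cup\Sigma_-$ has no sphere components and each component of $M$ meets $\Sigma$).

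By Theorem~\ref{thm:main2} — in the form of the remark following it, which dispenses with the decomposition-surface hypothesis for the implication we need — we obtain $\HH_\ast(M,\Sigma_-)=0$, and by \Poincare Lefschetz duality also $\HH_\ast(M,\Sigma_+)=0$. Running the long exact sequences of the pairs $(M,\Sigma_\mp)$ componentwise, together with the induction principle~\eqref{eq:induction} (applicable since $\Sigma_\pm\hookrightarrow M$ is $\pi_1$-injective), shows that the two inclusions induce weak isomorphisms $(i_\pm)_\ast\colon\HH_k(\Sigma_\pm)\to\HH_k(M)$ in every degree; in particular $\BB_0(\Sigma)=0$ and $\BB_1(M)=\BB_1(\Sigma)=\chi_-(\Sigma)=x_N(\phi)$. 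Now realize the cyclic cover as $N_{\ker\phi}=\bigcup_{i\in\Z}M_i$, where $M_i$ is a lift of $M$ and $M_i\cap M_{i+1}$ is a lift of $\Sigma$ (joining the $\Sigma_+$-side of $M_i$ to the $\Sigma_-$-side of $M_{i+1}$); all $M_i$ and all lifts of $\Sigma$ are $\pi_1$-injective in $N_{\ker\phi}$. For $n\in\N$ set $X_n:=M_{-n}\cup\dots\cup M_n$, a compact $\pi_1$-injective submanifold with $N_{\ker\phi}=\bigcup_nX_n$. A Mayer--Vietoris computation over the path graph indexing $X_n$, in which (after the identifications just obtained) the ``difference'' map $\bigoplus_j\HH_1(\Sigma)\to\bigoplus_j\HH_1(M)$ sends a class on the $j$-th lift of $\Sigma$ to its image in $M_{j-1}$ minus its image in $M_j$, has trivial kernel because $(i_\pm)_\ast$ are weak isomorphisms and $\BB_0(\Sigma)=0$; counting von Neumann dimensions — $2n+1$ copies of $M$ against $2n$ copies of $\Sigma$, with $\BB_1(M)=\BB_1(\Sigma)$ — gives $\BB_1(X_n\subset N_{\ker\phi})=x_N(\phi)$ for every $n$. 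Re-running the same sequence one copy of $M$ at a time shows that each inclusion $X_n\hookrightarrow X_{n+1}$ induces a weak isomorphism on first $\ell^2$-homology (it is injective up to null-dimensional kernel between modules of equal dimension $x_N(\phi)$). Since cellular chains of the universal cover commute with the directed union $N_{\ker\phi}=\bigcup_nX_n$, so does $\ell^2$-homology, so $\HH_1(N_{\ker\phi})$ is the directed union of the dimension-$x_N(\phi)$ images of the $\HH_1(X_n\subset N_{\ker\phi})$, and continuity of the von Neumann dimension (see~\cite[Chapter~6]{Lu02}) gives $\BB_1(N_{\ker\phi})=x_N(\phi)$. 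The excluded case $N=S^1\times D^2$ is trivial, since then the primitive classes are $\pm$ the generator, $x_N(\phi)=0$, and $N_{\ker\phi}\cong D^2\times\R$ is contractible.

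I expect the passage to the infinite limit to be the main obstacle: $N_{\ker\phi}$ is not of finite type, so one must work with $\ell^2$-Betti numbers of non-cocompact spaces and show that the exhaustion $\{X_n\}$ genuinely computes $\BB_1(N_{\ker\phi})$. This comes down to the transition maps being weak isomorphisms, which is precisely where the $\ell^2$-acyclicity of $(M,\Sigma_\pm)$ coming from Theorem~\ref{thm:main2} — equivalently, the injectivity of $(i_\pm)_\ast$ on $\HH_1$ — is indispensable. The remaining ingredients, namely that $\Sigma$ can be taken norm minimizing and that $M$ satisfies the hypotheses of Theorem~\ref{thm:main2}, are routine once the solid torus is handled separately.
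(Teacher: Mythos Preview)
Your proof is correct and follows the same route as the paper's: cut along a norm-minimizing $\Sigma$ dual to $\phi$, apply Theorem~\ref{thm:main2} to $M=N\setminus\nu(\Sigma)$, exhaust $N_{\ker\phi}$ by finite unions $X_n$ of translates of $M$, and pass to the limit using continuity of the von Neumann dimension from \cite[Chapter~6]{Lu02}. The only difference is bookkeeping---the paper tracks the relative groups $\HH_*(X_n,\Sigma)$, which vanish by excision and Theorem~\ref{thm:main2} (so the colimit step is trivial), whereas you track the absolute groups $\HH_1(X_n)$ via Mayer--Vietoris and show the transition maps are weak isomorphisms; the two arguments are equivalent, and your explicit treatment of $N=S^1\times D^2$ is a small bonus.
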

Note that $N_{\ker\phi}$ is not a finite CW-complex. We refer to Chapter 6 of L\"uck's monograph \cite{Lu02} for a definition of $\ell^2$-betti numbers  in this context. In the proof we are only using that the von Neumann dimension behaves well with respect to colimits of modules.

	\begin{proof}[Proof of Theorem~\ref{thm:haupt:luckfried}]
	Let $G=\ker \phi$ and $\Sigma$ be a Thurston norm minimizing decomposition surface \Poincare dual to $\phi$. We write $M= N\setminus \nu (\Sigma)$ and construct inductively
	\begin{align*}
	X_0&= M, \\
	X_n&= M\bigsqcup_{\Sigma_-=\Sigma_+} X_{n-1} \bigsqcup_{\Sigma_-=\Sigma_+}  M.
	\end{align*}
	By abuse of notation we write $\Sigma$ for $\Sigma_-$ in $X_0$. Since $\phi$ is primitive we have $\lim_{n\in\N} X_n = N_{\ker\phi}$. We refer to Figure~\ref{fig:coverequalsthurston} for an illustration of the situation.
	\begin{figure}
		\begin{center}
			\includegraphics[scale=1.5]{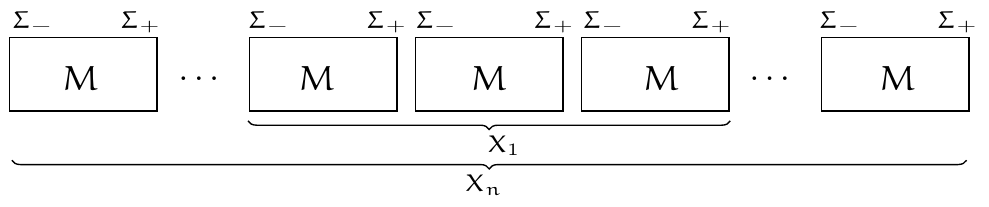}
			\caption{An illustration of the limiting process in the proof of Theorem~\ref{thm:haupt:luckfried}.}\label{fig:coverequalsthurston}
		\end{center}
	\end{figure} 
	
	Also note that all inclusions $\Sigma\to X_1\to X_n$ and $X_n\to N_\phi$ are $\pi_1$-injective and we can use the induction principle stated in Equation~(\ref{eq:induction}).
	
	By the excision isomorphism we have \[b^{(2)}(X_n,X_{n-1};\mathcal{N}(G))=b^{(2)}((M,\Sigma_+) \sqcup (M,\Sigma_-);\mathcal{N}(G))\] and hence by the main theorem
	\[ b^{(2)}(X_n,X_{n-1}; \mathcal{N}(G) )= b^{(2)}(X_n,X_{n-1})=0.\]
	We can consider the triple $(X_n,X_{n-1},\Sigma)$ and its associated long exact sequence in homology. It follows inductively that $b^{(2)}(X_n,\Sigma; \mathcal{N}(G))= 0$.  
	
	We also have the isomorphisms (see the PhD-thesis for more details \cite{He19}):
	
	\[\varinjlim_{n\in\N} \HH_*(X_n,\Sigma; \mathcal{N}(G))\cong\HH_*(\varinjlim_{n\in\N} X_n,\Sigma;\mathcal{N}(G)) =\HH_*(N_{\ker\phi},\Sigma;\mathcal{N}(G)).\]
	Then by cofinality of the von Neumann dimension (\cite[Theorem 6.13]{Lu02}) we have
	\[ b^{(2)}_*(N_{\ker\phi},\Sigma)= \lim_{n\to\infty} b^{(2)}_*(X_n,\Sigma; \mathcal{N}(G) ) = 0.\]
	We look at the long exact sequence in homology associated to the pair $(N_{\ker \phi}, \Sigma)$ and conclude from the additivity of the von Neumann dimension that \[ b^{(2)}_1(N_{\ker\phi})=b^{(2)}_1(\Sigma)=-\chi(\Sigma)= x_N(\phi) .\qedhere \]
\end{proof}

\end{document}